\newcommand{\Sympl}{\operatorname{Sympl}}
\newcommand{\UU}{\operatorname{U}}
\newcommand{\SO}{\operatorname{SO}}
\newcommand{\Sp}{\operatorname{Sp}}
\newcommand{\OO}{\operatorname{O}}
\newcommand{\GL}{\operatorname{GL}}
\DeclareMathOperator{\Id}{Id}
\DeclareMathOperator{\Diff}{Diff}
\DeclareMathOperator{\End}{End}
\newcommand{\la}{\langle}
\newcommand{\ra}{\rangle}
\newcommand{\inc}{\hookrightarrow}
\newcommand{\bd}{\partial}
\newcommand{\x}{\times}
\newcommand{\CP}{{\mathbb C \mathbb P}}
\newcommand{\Aut}{\operatorname{Aut}}
\newcommand{\PD}{\operatorname{PD}}
\newcommand{\ii}{\mathrm{i}}
\newcommand{\cHH}{{\mathcal H}}%incompatibilidad con paquete xypic - diagramas conmutativos
\newcommand{\cU}{{\mathcal U}}
\newcommand{\cV}{{\mathcal V}}
\newcommand{\CC}{{\mathbb C}}
\newcommand{\NN}{{\mathbb N}}
\newcommand{\RR}{{\mathbb R}}
\newcommand{\ZZ}{{\mathbb Z}}
\renewcommand{\a}{\alpha}
\renewcommand{\b}{\beta}
\renewcommand{\d}{\delta}
\newcommand{\g}{\gamma}
\newcommand{\e}{\varepsilon}
\renewcommand{\l}{\lambda}
\newcommand{\n}{\nu}
\renewcommand{\o}{\omega}
\newcommand{\G}{\Gamma}
\renewcommand{\O}{\Omega}
\renewcommand{\S}{\Sigma}
\newtheorem{theorem}{Theorem}
\newtheorem{proposition}[theorem]{Proposition}
\newtheorem{lemma}[theorem]{Lemma}
\newtheorem{definition}[theorem]{Definition}
\newtheorem{corollary}[theorem]{Corollary}
\newtheorem{remark}[theorem]{Remark}
\title{Symplectic resolution of orbifolds with homogeneous isotropy}
\author[V. Mu\~{n}oz]{Vicente Mu\~{n}oz}
\address{Facultad de Ciencias Matem\'aticas, Universidad
Complutense de Madrid, Plaza de Ciencias 3, 28040 Madrid, Spain}
\email{vicente.munoz@mat.ucm.es}
\author[J.A. Rojo]{Juan Angel  Rojo}
\address{Facultad de Ciencias Matem\'aticas, Universidad
Complutense de Madrid, Plaza de Ciencias 3, 28040 Madrid, Spain}
\address{Instituto de Ciencias Matem\'aticas (CSIC-UAM-UC3M-UCM),
C/ Nicol\'as Cabrera 15, 28049 Madrid, Spain}
\email{jarcarulli@ucm.es}
\keywords{Orbifold, Symplectic, Isotropy, Resolution}
\begin{document}

\begin{abstract}
We construct the symplectic resolution of a symplectic orbifold whose isotropy locus consists
of disjoint submanifolds with homogeneous isotropy, that is, all its points have the same
isotropy groups.
\end{abstract}

\maketitle

%%%%%%%%%%%%%%%%
\section{Introduction}\label{sec:1}
%%%%%%%%%%%%%%%%

An orbifold is a space which is locally modelled on balls of $\RR^n$ quotient by a finite group.
These have been very useful in many geometrical contexts \cite{Th}. In the setting of 
symplectic geometry, symplectic orbifolds have been introduced mainly as a way to
construct symplectic manifolds by resolving their singularities.
The problem of resolution of singularities and blow-up in the symplectic setting 
was posed by Gromov in \cite{G}. Few years later, the symplectic blow-up was
rigorously defined by McDuff \cite{McD} and it was used to construct a simply-connected
symplectic manifold with no K\"ahler structure.

McCarthy and Wolfson developed in \cite{MW} a symplectic resolution for isolated 
singularities of orbifolds in dimension $4$.
Later on, Cavalcanti, Fern\'andez and the first author gave a method of performing 
symplectic resolution of orbifold isolated singularities in all dimensions \cite{CFM}. This was
used in \cite{FM} to give the first example of a simply-connected symplectic $8$-manifold
which is non-formal, as the resolution of a suitable symplectic $8$-orbifold. This manifold
was proved to have also a complex structure in \cite{BM}.

Niederkr\"uger and Pasquotto \cite{NP1,NP2} provided a method for resolving symplectic
orbifold singularities via symplectic reduction, which can be used for some classes of
symplectic singularities, including cyclic orbifold singularities, even if these are not isolated.
Recently, Chen \cite{C} has detailed a method of resolving arbitrary symplectic $4$-orbifolds,
using the fact that the singular points of the underlying space have to be isolated in dimension $4$.
The novelty is that there can be also surfaces of non-trivial isotropy, 
and the symplectic orbifold form has to be modified on these surfaces also. In this dimension,
the work of the authors with Tralle \cite{MRT} also serve to resolve symplectic $4$-orbifolds
whose isotropy set is of codimension $2$. In such case the orbifold is topologically a manifold
(the isotropy points are non-singular),
so the question only amounts to change the orbifold symplectic form into a smooth symplectic form.

Bazzoni, Fern\'andez and the first author \cite{BFM} have given the first construction of a 
symplectic resolution of an orbifold of dimension $6$ with isotropy sets of dimension $0$ and
$2$, although the construction is ad hoc for the particular example at hand as it satisfies that
the normal bundle to the $2$-dimensional isotropy set is trivial. This was used to give
the first example of a simply-connected non-K\"ahler manifold which is simultaneously complex
and symplectic.
symplectic tubular
In this paper we give a procedure to resolve a wider type of singularities in a symplectic 
orbifold $X$ of arbitrary dimension $2n$. 
We are able to develop such resolution for orbifolds $X$ whose isotropy set is composed of disjoint submanifolds $D_i$
so that each of the $D_i$ have the same isotropy groups at all its points. We call this $D_i$ a homogeneous
isotropy set and such orbifold $X$ a HI orbifold.
This allows the existence of positive dimensional submanifolds composed of singular points.
The singular points of the topological underlying space are not isolated, hence new 
techniques are required in order to perform the resolution. We are able to endow the 
normal bundle to $D_i$ with a nice structure in which to
effectively perform fiberwise the algebraic resolution of singularities 
of \cite{EV}, and then glue these local resolutions into a resolution $\tilde{X}$ of $X$.

The general strategy is to endow the normal bundle $\nu_D$ of any homogeneous 
isotropy submanifold $D \subset X$ with the structure of an orbifold bundle with structure group $\UU(k)$,
where $2k$ is the codimension of $D$. 
The singularities of $X$ at the points of $D$ are quotient singularities in the fibers $F=\CC^k/\G$ of $\nu_D$,
where $\G$ is the isotropy group of $D$.
The usual resolution of singularities for algebraic geometry allows to resolve each of the fibers $F$ of $\nu_D$
separately. However, we need this resolution to glue nicely when we change trivializations.
For this we need an improvement of the classical theorem of resolution of singularities by Hironaka \cite{H}.
This improvement is the \emph{constructive resolution of singularities}
developed by Encinas and Villamayor \cite{EV},
which is compatible with group actions. 
Using their result we are able to construct the resolution $\tilde\nu_D$ of $X$ near $D$ as a smooth manifold.

The resolution $\tilde\nu_D$ has the structure of a fiber bundle over $D$, 
with fiber the resolution $\tilde{F}$ of $F= \CC^k/\G$.
Both base $D$ and fiber $\tilde{F}$ of the total space $\tilde\nu_D$ are symplectic, 
but this does not imply directly that $\tilde\nu_D$ admits a 
symplectic form. First we need to prove that there is no cohomological obstruction for this, which amounts
to finding a cohomology class on the total space $\tilde\nu_D$ that restricts to the cohomology class of 
the symplectic form of the fiber. Secondly, we have to develop a globalization procedure for symplectic
fiber bundles with non-compact symplectic fiber. The final step is 
to glue the symplectic form on $\tilde\nu_D$ with the original symplectic form of $X  -  D$.

The main result is:

\begin{theorem} \label{thm:main-thm}
Let $(X, \o)$ be a symplectic orbifold with isotropy set consisting of disjoint homogeneous isotropy 
subsets. Then there exists a symplectic manifold $(\tilde{X}, \tilde{\o})$ and a smooth map 
$b: (\tilde{X},\tilde{\o}) \to (X,\o)$
which is a symplectomorphism outside an arbitrarily small neighborhood of the isotropy set of $X$.
\end{theorem}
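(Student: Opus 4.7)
The strategy is to work separately around each connected homogeneous isotropy submanifold $D=D_i$ with isotropy group $\G$ (the $D_i$ being disjoint), resolve a small tubular neighborhood of $D$, and then glue the result with the unchanged symplectic form on the complement. I would first establish the local model: by an orbifold version of the symplectic tubular neighborhood theorem together with the choice of a $\G$-invariant compatible almost complex structure on the normal bundle, a neighborhood of $D$ in $X$ is symplectomorphic to a neighborhood of the zero section in $\nu_D/\G$, where $\nu_D\to D$ is the symplectic normal bundle reduced to a $\UU(k)$-bundle with $2k=\codim D$. In this model every fiber is $F=\CC^k/\G$ with the fixed linear $\G$-action.

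Next I would perform the fiberwise resolution. The Encinas--Villamayor constructive resolution \cite{EV} produces a canonical $\tilde F\to F$ which is equivariant with respect to the automorphism group of the pair $(\CC^k,\G)$, and in particular is compatible with the $\UU(k)$ cocycle of transition functions of $\nu_D$. Applying it in every fiber therefore produces a smooth fiber bundle $\tilde\nu_D\to D$ with fiber $\tilde F$, together with a proper map $\tilde\nu_D\to \nu_D/\G$ which is a diffeomorphism off the singular section. This is precisely how the local resolutions glue into a global smooth manifold.

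The main obstacle is then to endow $\tilde\nu_D$ with a symplectic form $\tilde\o$ that matches the lift of $\o$ outside an arbitrarily small neighborhood of $D$. The fiber $\tilde F$ carries a natural K\"ahler form $\o_{\tilde F}$ coming from the resolution, and the base $(D,\o|_D)$ is symplectic; however, to run a Thurston-type coupling one must first exhibit a closed global $2$-form on $\tilde\nu_D$ whose restriction to every fiber represents $[\o_{\tilde F}]$. This is a genuine cohomological extension problem, and the fact that $\tilde F$ is \emph{non-compact} means that the standard Thurston globalization for symplectic fibrations must be reworked, presumably by working with compactly supported fiber classes carried by the exceptional divisor of the resolution. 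I expect this globalization, and the underlying cohomological extension, to be the hardest step.

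Once such a $\tilde\o$ is produced on the tubular model and agrees with the lift of $\o$ outside a slightly larger neighborhood, a Moser-type interpolation on the annular collar in between, combined with shrinking the tube to preserve non-degeneracy, yields a globally defined symplectic form on $\tilde X$. Running the construction in parallel on disjoint neighborhoods of the various components $D_i$ produces the desired smooth map $b\colon(\tilde X,\tilde\o)\to (X,\o)$, which is a symplectomorphism away from the chosen neighborhood of the isotropy locus.
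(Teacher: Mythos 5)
Your proposal follows essentially the same route as the paper: reduce the normal bundle to a $\UU(k)$-orbifold bundle via a symplectic tubular neighborhood, apply the Encinas--Villamayor constructive resolution fiberwise using its equivariance to glue into a smooth bundle $\tilde\nu_D$, solve the cohomological extension problem with a class carried by the exceptional divisors, run a Thurston-type globalization adapted to the non-compact fiber via a proper height function, and interpolate back to $b^*\o$ on an annular collar. You correctly identify the cohomological extension and the non-compact Thurston step as the crux; the paper's key additional observation there, which your sketch leaves implicit, is that the transition functions permute the unordered set of exceptional divisors and preserve the coefficients of $\PD[\O_{\tilde F}]$, which is what makes the chain $\sum_\a\sum_i a_i\,\phi_\a(Q_\a\times Z_i)$ closed.
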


We conclude the paper with some examples in which Theorem \ref{thm:main-thm} applies.

\medskip

\noindent\textbf{Note.} In the current version v3 we have corrected a small technical error of v2 where the singularity
of the orbifold symplectic form $\o_F$ at the origin was not handled correctly. To solve it,
we have interpolated $\o_F$ with $0$ near the singularity in a suitable way.
Some extra minor modifications have been made along the way, mainly notational issues.

\medskip

\noindent\textbf{Acknowledgements.} We are grateful to Fran Presas for useful conversations.
We also thank Luc\'ia Mart\'in-Merch\'an for pointing out a gap in the previous version v2 and for her
careful reading of the current version v3.
The authors were partially supported by Project MICINN (Spain) MTM2010-17389. 
The second author acknowledges financial support by the International PhD program La Caixa-Severo Ochoa.

%%%%%%%%%%%%%%%%%%%%%%%%%%%%%%%%%%%%%%%%%%%%%%%%%%%%%%%%%%%%%%%
\section{Orbifolds}\label{sec:orbif}
%%%%%%%%%%%%%%%%%%%%%%%%%%%%%%%%%%%%%%%%%%%%%%%%%%%%%%%%%%%%%%%

We start by giving the basic definitions and results of symplectic orbifolds that we will need later.

\begin{definition}\label{definition orbifold}
An $n$-dimensional (differentiable) orbifold is a Hausdorff and second-countable space $X$ 
endowed with an atlas $\{(U_{\a},V_\a, \phi_{\a},\G_{\a})\}$, 
where $\{V_\a\}$ is an open covering of $X$,
$U_\a \subset\RR^n$, $\G_{\a} < \Diff(U_\a)$ is a finite group acting by diffeomorphisms, and 
$\phi_{\a}:U_{\a} \to V_{\a} \subset X$ is a $\G_{\a}$-invariant map which induces a 
homeomorphism $U_{\a}/\G_\a \cong V_{\a}$. 

There is a condition of compatibility of charts for intersections.
For each point $x \in V_{\a} \cap V_{\b}$ there is some $V_\d \subset V_{\a} \cap V_{\b}$ 
with $x \in V_\d$ so that there are group monomorphisms $\rho_{\d \a}: \G_\d \inc \G_\a$,
$\rho_{\d \b}: \G_\d \inc \G_\b$, and open embeddings $\imath_{\d \a}: U_\d \to U_\a$, 
$\imath_{\d \b}: U_\d \to U_\b$, which satisfy 
$\imath_{\d \a}(\g(x))=\rho_{\d \a}(\g)(\imath_{\d \a}(x))$ and 
$\imath_{\d \b}(\g(x)) = \rho_{\d \b}(\g)(\imath_{\d \b}(x))$, for all $\g\in \G_\d$. 
\end{definition}

For an orbifold $X$, a change of charts is the map 
 $$
 \psi^{\d}_{\a \b}= \imath_{\d \b} \circ \imath_{\d \a}^{-1}:\imath_{\d \a}(U_\d) \subset U_\a
\to \imath_{\d \b}(U_\d) \subset U_\b.
 $$
So the change of charts between the chart $U_\a$ and $U_\b$ depends on the 
inclusion of a third chart $U_\d$. This dependence is up to the action of an element in $\G_\d$. In general,
we abuse notation and write $\psi_{\a \b}$ for any change of chart between $U_\a$ and $U_\b$.

For any point $x\in X$, by taking $U$ a small enough neighbourhood 
we can arrange always a chart  $U \subset \RR^n$, $U/\G \cong V$
so that the group $\G$ acting on $U$ leaves the point $x$ fixed, i.e.\ $\g(x)=x$ for all $\g \in \G$.
In this case, we call $\Gamma$ the \emph{isotropy group} at $x$, and we denote it by $\Gamma_x$.

We call $x\in X$ a \emph{smooth} point if a neighbourhood of $x$ is 
homeomorphic to a ball in $\RR^n$, and singular otherwise. 
We call $x \in X$ a \emph{regular} point if the isotropy group $\G_x=\{ \Id \}$ is trivial, and
we call it an \emph{isotropy} point if it is not regular.
Clearly a regular point is smooth, but not conversely.
We say that an orbifold $X$ is smooth if all its points are smooth. 
This is equivalent to $X$ being a topological manifold.
Finally, let us denote by $\S$ the set of isotropy points of an orbifold $X$.

\begin{proposition} \label{isometry local model}
Every orbifold $X$ has an atlas $\{(U_\a,V_\a, \phi_a,\G_\a)\}$ where the isotropy groups $\G_\a < \OO(n)$.
\end{proposition}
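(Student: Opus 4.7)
The plan is to show that around every point we can build a chart in which the isotropy group acts linearly, indeed orthogonally, on a ball in $\RR^n$, and that these charts assemble into a compatible atlas.

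First, I would localize the problem. Fix a point $x \in X$ and, using the observation made just before the proposition, choose a chart $(U,V,\phi,\G)$ around $x$ in which $\G$ acts on $U \subset \RR^n$ fixing the origin $0 \in U$ (with $\phi(0)=x$). Since $\G$ is finite, I can pick any Riemannian metric $g_0$ on $U$ and average it over $\G$:
\begin{equation*}
g \; = \; \frac{1}{|\G|}\sum_{\g \in \G} \g^* g_0.
\end{equation*}
This produces a $\G$-invariant smooth Riemannian metric $g$ on (a neighborhood of $0$ in) $U$.

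The main step is then a $\G$-equivariant linearization via the exponential map. Because $\g \in \G$ fixes $0$ and preserves $g$, the differential $d_0 \g \in \GL(T_0 U) \cong \GL(n,\RR)$ lies in the orthogonal group $\OO(T_0 U, g_0) \cong \OO(n)$, where $g_0$ denotes $g|_{T_0 U}$. Thus the assignment $\g \mapsto d_0 \g$ defines a representation $\rho : \G \to \OO(n)$; since $\G$ acts effectively on $U$ by diffeomorphisms fixing $0$, $\rho$ is injective. Consider the exponential map $\exp_0 : T_0 U \to U$ of $g$, which is a diffeomorphism on a small ball $B \subset T_0 U$. Because $g$ is $\G$-invariant and each $\g$ fixes $0$, we have the equivariance relation $\g \circ \exp_0 = \exp_0 \circ \, d_0 \g$ on $B$. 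Identifying $T_0 U \cong \RR^n$ via an orthonormal basis, this rewrites $\exp_0$ as a $\G$-equivariant diffeomorphism from a ball $U' \subset \RR^n$ (on which $\G$ acts linearly via $\rho$) to a neighborhood $\phi(\exp_0(B)) = V' \subset V$ of $x$.

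The new tuple $(U', V', \phi \circ \exp_0, \rho(\G))$ is then an orbifold chart around $x$ in which the isotropy group is a subgroup of $\OO(n)$. Carrying this out at every point of $X$ (for a regular point $x$, the construction reduces to a Euclidean chart with trivial group) produces a family of such charts covering $X$. To check compatibility, the new charts differ from the original charts only by composition with the $\G$-equivariant diffeomorphism $\exp_0$, so the change-of-chart maps $\psi_{\a\b}$ between two new charts are obtained from the original ones by pre- and post-composition with these equivariant diffeomorphisms, preserving the required compatibility of injections $\imath_{\d\a}$ and group monomorphisms $\rho_{\d\a}$ in Definition \ref{definition orbifold}. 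The resulting refinement is the desired atlas.

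The main technical point, and the only one requiring care, is the equivariance $\g \circ \exp_0 = \exp_0 \circ \, d_0 \g$: it relies on $\g$ being an isometry of $g$ fixing $0$, which sends geodesics to geodesics and therefore conjugates the exponential map to its differential at the fixed point. Everything else (averaging the metric, finiteness of $\G$, the inheritance of orbifold compatibility under equivariant diffeomorphisms) is essentially bookkeeping.
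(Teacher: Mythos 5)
Your proof is correct and follows essentially the same route as the paper's: average a metric over the finite group $\G$, linearize via the exponential map using the equivariance $\g\circ\exp_0=\exp_0\circ\, d_0\g$, and pass to an orthonormal basis of $(T_0U, g|_0)$ so that $\G$ acts through $\OO(n)$. The only difference is that you additionally spell out the compatibility of the new charts, which the paper leaves implicit.
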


\begin{proof}
Let $\phi:U\to V \cong U/ \G$ be a small orbifold chart around a point $x \in X$, with $\G$ acting on $U\subset \RR^n$ 
by diffeomorphisms. We can suppose that the point $x=\phi(0)$ and that all elements of
$\G$ fix $0$. We consider the standard metric $g_{std}$ on $U$ and take $g:= \frac{1}{|\G|} \sum_{\g \in \G} \g^* g_{std}$.
Then $g$ is a Riemannian metric on $U$ and it is $\G$-invariant. We consider now 
the exponential map for the metric $g$, $\exp_0: T_0 U=\RR^n \to U$. Since any $\g \in \G$
acts by isometries, we have $\exp_0 \circ d_0 \g (v) = \g \circ \exp_0 (v)$ for all $v \in \RR^n$.
Take $\epsilon>0$ small enough so that $\exp_0:B_\epsilon(0) \to U'=\exp_0(B_\epsilon(0))\subset U$ is
a diffeomorphism. Then we have a chart $\phi'=\phi\circ \exp_0:B_\epsilon(0) \to V'=\phi(U')$ and
the group $\Gamma$ acts on $B_\epsilon(0)$ via $\Gamma \hookrightarrow \GL(n)$, $\gamma \mapsto d_0\gamma$.
Moreover, $d_0\gamma$ are isometries with respect to the metric $g$ at the point $0$, i.e.\ $g|_0$. If we
take an orthonormal basis of $\RR^n$ with respect to $g|_0$, then $\Gamma < \OO(n)$. 
\end{proof}

\begin{proposition} \label{prop:isotropy set}
Let $X$ be an orbifold, and let $\S$ be its isotropy subset. For every conjugacy class of
finite subgroup $H<\OO(n)$, we can define the set
 $$ 
 \S_H= \{ x\in X | \G_x \cong H\}.
 $$
Then the closure $\overline{\S}_H$ is an orbifold, and 
$\S_H= \overline{\S}_H -  \bigcup_{H<H'} \S_{H'}$ is a smooth manifold.
\end{proposition}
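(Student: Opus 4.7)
The plan is to work in the orthogonal charts provided by Proposition \ref{isometry local model} and to exploit the fact that the fixed-point set of any finite subgroup of $\OO(n)$ acting linearly on $\RR^n$ is a linear subspace. This reduces both assertions of the proposition to statements about orbits of a finite orthogonal group on a vector space.

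First, to show that $\S_H$ is a smooth manifold, I would fix $x\in \S_H$ and pick an orthogonal chart $(U,V,\phi,\G)$ around $x$ with $\G=\G_x\cong H$ fixing $0=\phi^{-1}(x)$. Any $y\in U$ has stabilizer $\G_y\leq \G$, and $\G_y\cong \G$ forces $\G_y=\G$ since these are finite subgroups of equal order. Hence the preimage of $\S_H$ in $U$ is exactly the fixed locus
\[
U^\G \,=\, \bigcap_{\g\in \G}\ker(\g-\Id),
\]
a linear subspace of $\RR^n$. Since $\G$ acts trivially on $U^\G$, the set $\S_H\cap V=\phi(U^\G)$ is diffeomorphic to $U^\G$. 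The orbifold chart changes are equivariant with respect to the induced isomorphisms of isotropy groups, so they restrict to diffeomorphisms of the corresponding fixed subspaces, assembling into a smooth atlas on $\S_H$.

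Next, for the orbifold structure on $\overline{\S}_H$, I would take $x_0\in \overline{\S}_H$ with $\G_{x_0}=H'$ (where $H'$ must contain a subgroup $\OO(n)$-conjugate to $H$), and an orthogonal chart $(U',V',\phi',H')$ at $x_0$. Then $\overline{\S}_H\cap V'$ is the image under $\phi'$ of the $H'$-invariant subset
\[
Z' \,=\, \bigcup_{H_0\leq H',\ H_0\cong H}(U')^{H_0},
\]
the union running over subgroups of $H'$ that are $\OO(n)$-conjugate to $H$. Each $(U')^{H_0}$ is a linear subspace; its pointwise stabilizer in $H'$ is $H_0$, which acts trivially on $(U')^{H_0}$, so the effective action is by the finite quotient $W_{H_0}=N_{H'}(H_0)/H_0$. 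The piece $(U')^{H_0}/W_{H_0}$ therefore provides an orbifold chart for $\overline{\S}_H$ around $x_0$, and these pieces assemble into an orbifold atlas. The identity $\S_H=\overline{\S}_H\setminus \bigcup_{H<H'}\S_{H'}$ is then immediate from the definitions, since any point of $\overline{\S}_H$ whose isotropy strictly enlarges $H$ belongs to some higher stratum $\S_{H'}$.

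The main difficulty is the coherence of the orbifold atlas on $\overline{\S}_H$ at points $x_0$ where several $H'$-conjugacy classes of subgroups of $H'$ are simultaneously $\OO(n)$-conjugate to $H$: in that case $Z'/H'$ is a union of linear subspaces meeting at $0$, and one must verify that the individual orbifold charts $(U')^{H_0}/W_{H_0}$ fit together consistently across changes of orbifold chart. This coherence is ultimately enforced by the equivariance of the orbifold chart transitions guaranteed by Proposition \ref{isometry local model}, but it is the step that demands the most care.
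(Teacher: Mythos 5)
Your proposal is correct and follows essentially the same route as the paper's proof: pass to orthogonal charts, identify $\S_H$ and $\overline{\S}_H$ locally with the linear fixed subspaces of the subgroups of the local group conjugate to $H$, note that the open stratum of exact isotropy $H$ is the complement of finitely many smaller subspaces, and let the equivariance of the chart transitions carry the (sub)orbifold atlas. The one point where you diverge is in identifying the group acting on a local chart of $\overline{\S}_H$ as $N_{H'}(H_0)/H_0$ rather than the paper's $\G/\la H\ra$; your description is in fact the more accurate one, since only the normalizer of $H_0$ preserves the corresponding fixed subspace.
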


\begin{proof}
Let $x_0 \in \S$ be an isotropy point and take a local chart $(U,V,\phi,\G)$ near $x$ with $\G<\OO(n)$.
Let $\G=\{\g_1=\Id, \g_2,\ldots ,\g_N\}$ and consider the linear subspaces $L_i=\ker (\g_i-\Id)\subset \RR^n$, for 
$1\leq i\leq N$. For every subgroup $H<\G$, we define $L_H=\bigcap_{\gamma_i \in H} L_i\subset  \RR^n$.
This gives a finite collection of subspaces, which are stratified, in the sense that $H'<H$ implies that $L_H\subset L_{H'}$.
For given $H<\G$, let $L_H^0=L_H  -  \bigcup_{H'>H} L_{H'}$.
If $L_H^0$ is not empty,
then a point $x\in L_H^0$ satisfies that its isotropy is exactly $H$. So $\S_H\cap V =\phi(L_H^0\cap U)$.
Clearly $\overline{\S}_H=\phi(L_H\cap U)$, hence it is an orbifold with chart 
$(U\cap L_H,V\cap \overline{\S}_H, \phi, \G/\la H\ra)$.
Note that for any conjugate $\hat H=\gamma H \gamma^{-1}$, $L_{\hat H}=\gamma L_H$ and
$\phi(L_H\cap U)=\phi(L_{\hat H}\cap U)$, and the converse also holds. Take the minimal normal 
subgroup $\la H\ra$ containing $H$. Then $\G/ \la H \ra$ acts on $L_H$.
\end{proof}

An orbifold function $f:X\to \RR$ is a continuous function such that $f\circ \phi_\a:U_\a \to \RR$ is smooth
for every $\a$. Note that this is equivalent to giving smooth functions $f_\a$ on $U_\a$ which are $\G_\a$-equivariant
and which agree under the changes of charts.
An orbifold partition of unity subordinated to the open cover $\{V_\a\}$ of $X$ consists of orbifold functions
$\rho_\a:X \to [0,1]$ such that
the support of $\rho_\a$ lies inside $V_\a$ and the sum $\sum_\a \rho_\a \equiv 1$ on $X$.

\begin{proposition} \label{partition unity}
Let $X$ be an $n$-orbifold. For any sufficiently refined locally finite open cover $\{V_\a\}$ of $X$ there exists an
orbifold partition of unity subordinated to $\{V_\a\}$.
\end{proposition}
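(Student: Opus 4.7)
The plan is to mimic the classical manifold construction of a partition of unity, with the additional ingredient of group-averaging bump functions to make them invariant under the local isotropy, and then to use the chart compatibility of Definition~\ref{definition orbifold} to glue across different charts.

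First, I would use paracompactness of $X$ to shrink the given locally finite cover to $\{V_\a'\}$ with $\overline{V_\a'}\subset V_\a$ and $\{V_\a'\}$ still covering $X$; set $K_\a=\phi_\a^{-1}(\overline{V_\a'})$, a compact $\G_\a$-invariant subset of $U_\a$. For each $\a$ I would pick a smooth function $f_\a\colon U_\a\to[0,1]$ equal to $1$ on $K_\a$ with compact support in $U_\a$, and form the $\G_\a$-invariant bump
\[
\tilde\rho_\a=\frac{1}{|\G_\a|}\sum_{\g\in\G_\a}\g^{*}f_\a,
\]
which still equals $1$ on $K_\a$ and is compactly supported in $U_\a$. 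This descends to a continuous function $\rho_\a\colon V_\a\to[0,1]$, and extending by zero yields a continuous function on $X$ whose support is closed and contained in the open set $V_\a$.

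The main obstacle is to verify that $\rho_\a$, extended by zero, is an orbifold function, i.e., that $\rho_\a\circ\phi_\b$ is smooth on $U_\b$ for every chart $(U_\b,V_\b,\phi_\b,\G_\b)$. For $x\in U_\b$ with $\phi_\b(x)\notin\supp\rho_\a$, the function vanishes on an open neighborhood, so smoothness is automatic; for $x$ with $\phi_\b(x)\in\supp\rho_\a\subset V_\a\cap V_\b$, I would apply the chart compatibility to obtain a subchart $V_\d\subset V_\a\cap V_\b$ containing $\phi_\b(x)$ together with open embeddings $\imath_{\d\a}\colon U_\d\to U_\a$ and $\imath_{\d\b}\colon U_\d\to U_\b$, yielding
\[
\rho_\a\circ\phi_\b\circ\imath_{\d\b}=\tilde\rho_\a\circ\imath_{\d\a}
\]
on $U_\d$, which is manifestly smooth. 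Since $\rho_\a\circ\phi_\b$ is $\G_\b$-invariant and $\G_\b$ acts on $U_\b$ by diffeomorphisms, smoothness propagates across each $\G_\b$-orbit, so the two cases together cover $U_\b$. Compactness of $\supp\tilde\rho_\a$ inside $U_\a$ is what makes the dichotomy work.

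Finally, by local finiteness the function $S=\sum_\a\rho_\a$ is a well-defined orbifold function, and $S(x)\ge 1$ for all $x$ because $\{V_\a'\}$ covers $X$ and $\rho_\a\equiv 1$ on $V_\a'$. Setting $\rho_\a':=\rho_\a/S$ yields the required orbifold partition of unity subordinated to $\{V_\a\}$.
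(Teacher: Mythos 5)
Your proof is correct and follows essentially the same strategy as the paper: invariant bump functions on charts that are identically $1$ on a shrunken cover, descended to the quotient, extended by zero, and normalized by the (everywhere positive) sum. The only differences are that the paper gets $\G_\a$-invariance for free by using radial bumps on charts with $\G_\a<\OO(n)$ (Proposition \ref{isometry local model}) instead of your averaging over $\G_\a$, and that you spell out the verification that the extension by zero is an orbifold function via the chart-compatibility condition, which the paper leaves implicit; both are sound.
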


\begin{proof}
Take an open cover $\{V_\a\}$ of $X$ formed by coordinate patches $V_\a \cong B_{3\e}(0) /\G_\a$ with
$\G_\a < \OO(n)$ and so that $V'_\a \cong B_{\e}(0) /\G_\a$ is also an open cover of $X$. 
We can suppose that $V_\a$ is locally finite.
Take $\tilde{f}: \RR^n \to \RR$ be a radial bump function so that 
$\tilde{f} \equiv 0$ on $B_{3\e}(0)  -  B_{2\e}(0)$ and $\tilde{f} \equiv 1$ on $B_\e(0)$.
Since $\tilde{f}$ is a radial function and $\G_\a < \OO(n)$, it descends to the quotient 
and gives a continuous function $f_\a: V_\a \to \RR$ which can be extended by zero to all $X$
so we write $f_\a: X \to \RR$. The sum $\sum_{\b} f_\b(x)>0$ at all points of $X$ because the sets $V'_\a$ 
form a cover of $X$. We define  $\rho_\a= {f_\a}/{\sum_{\b} f_\b}$, and thus $\sum_\a \rho_\a \equiv 1$ on $X$. 
\end{proof}

Let $X$ be an orbifold with atlas $\{( U_\alpha,V_\a,\phi_\alpha,\Gamma_\alpha)\}$. 
An orbifold tensor on $X$ is a collection of tensors $T_\alpha$ 
on each $ U_\alpha$ which are $\G_\a$-invariant, and which agree under the changes of charts. 
In particular, we have the set of orbifold differential forms $\O_{orb}^p(X)$, orbifold Riemannian
metrics $g$, and orbifold almost complex structures $J$. The exterior differential, covariant derivatives, Lie bracket, 
Nijenhuis tensor, etc, are defined in the usual fashion.

\begin{proposition}
Let $X$ be an orbifold. There exists an orbifold Riemannian metric $g$ on $X$.
\end{proposition}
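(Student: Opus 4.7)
The plan is the standard partition-of-unity construction, adapted to the orbifold setting using the tools already established in this section.

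First, by Proposition \ref{isometry local model} I choose an atlas $\{(U_\alpha, V_\alpha, \phi_\alpha, \Gamma_\alpha)\}$ in which every $\Gamma_\alpha$ acts on $U_\alpha \subset \RR^n$ through $\OO(n)$, and refine it so as to obtain a locally finite cover. Proposition \ref{partition unity} then provides a subordinate orbifold partition of unity $\{\rho_\alpha\}$; each $\rho_\alpha$ lifts to a $\Gamma_\alpha$-invariant smooth function $\tilde\rho_\alpha$ on $U_\alpha$, supported in $U_\alpha$. On each chart I equip $U_\alpha$ with the $\Gamma_\alpha$-averaged Euclidean metric
\[
g_\alpha := \frac{1}{|\Gamma_\alpha|} \sum_{\gamma \in \Gamma_\alpha} \gamma^* g_{\mathrm{std}} ,
\]
which is $\Gamma_\alpha$-invariant and positive definite.

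To globalize, I define the local expression of $g$ in each chart $(U_\beta, V_\beta, \phi_\beta, \Gamma_\beta)$ by the sum
\[
g_\beta := \sum_\alpha \tilde\rho_\alpha \cdot (\imath_{\delta\beta}^{-1})^*\, \imath_{\delta\alpha}^*\, g_\alpha ,
\]
using, near each point and for each relevant $\alpha$, a common refining chart $U_\delta$ with the embeddings $\imath_{\delta\alpha}: U_\delta \to U_\alpha$ and $\imath_{\delta\beta}: U_\delta \to U_\beta$ provided by Definition \ref{definition orbifold}, and extending the contribution by zero where $\tilde\rho_\alpha$ vanishes. Local finiteness guarantees that the sum is finite near every point.

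The three items to verify are: (i) that each $g_\beta$ is well-defined independently of the auxiliary choices, (ii) that it is $\Gamma_\beta$-invariant and transforms correctly under changes of charts, and (iii) that it is positive definite. Items (ii) and (iii) are routine once (i) is in hand: finite sums of $\Gamma_\beta$-invariant tensors are invariant, functoriality of pullback gives the compatibility across charts demanded by the definition of an orbifold tensor, and a convex combination of positive definite tensors is positive definite since $\sum_\alpha \tilde\rho_\alpha \equiv 1$. The one subtle point is (i): the embeddings $\imath_{\delta\alpha}$ in Definition \ref{definition orbifold} are only determined up to the action of an element of $\Gamma_\delta$ intertwined with $\Gamma_\alpha$ via $\rho_{\delta\alpha}$, so the pullback $\imath_{\delta\alpha}^* g_\alpha$ would be ambiguous for a generic tensor. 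This ambiguity is precisely absorbed by the $\Gamma_\alpha$-invariance of $g_\alpha$, which is exactly why the averaging step in the construction of $g_\alpha$ is essential. I expect this to be the only place that requires genuine care; once it is addressed, the collection $\{g_\beta\}$ assembles into a global orbifold Riemannian metric on $X$.
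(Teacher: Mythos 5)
Your proof is correct and follows essentially the same route as the paper: local charts with $\G_\a<\OO(n)$ from Proposition \ref{isometry local model}, the standard Euclidean metric on each $U_\a$, and the orbifold partition of unity of Proposition \ref{partition unity} to glue. The only (harmless) redundancy is the averaging step: since $\G_\a<\OO(n)$ the standard metric is already $\G_\a$-invariant (and averaging it returns the same metric), and it is this invariance --- not the averaging per se --- that resolves the ambiguity of the embeddings $\imath_{\d\a}$ up to elements of $\G_\d$, exactly as you observe.
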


\begin{proof}
Let us consider an atlas $\{(U_\a,V_\a,\phi_\a, \G_\a)\}$ where the isotropy groups $\G_\a \subset \OO(n)$, 
whose existence is proved in Proposition \ref{isometry local model}.
Consider the standard metric $g_\a$ on $U_\a$ which is in particular $\G_\a$-invariant.
Take a differentiable partition of unity $\rho_\a$ subordinated to $\{V_\a\}$, given by Proposition \ref{partition unity}.
Define $g= \sum_\a \rho_\a g_\a$. This is an orbifold tensor on $X$, as $g_\a$ are orbifold tensors and $\rho_\a$ 
orbifold functions. It is an orbifold Riemannian metric by the usual convexity argument.
\end{proof}

An orbifold $X$ is orientable if all $\G_\a$ acts by orientation preserving diffeomorphisms and all embeddings $\imath_{\d\a}$
in Definition \ref{definition orbifold} preserve orientation. In this case we have an atlas with all $\G_\a<\SO(n)$ and
all changes of charts preserving orientation. This is equivalent to the existence of a globally non-zero orbifold form of
degree $n$, called a \emph{volume form}.

Given an orbifold $X$, the orbifold forms $(\Omega_{orb}(X),d)$ define the orbifold
De Rham cohomology algebra, and its cohomology is denoted $H_{orb}^*(X)$. 
This is isomorphic to the usual singular cohomology with real coefficients \cite{CFM}, 
 \begin{equation} \label{eqn:Horb}
 H^*_{orb}(X)\cong H^*(X,\RR).
 \end{equation}

%%%%%%%%%%%%%%%%%%%%%%%%%%%%%%%%%%%%%%%%%%%%%%%%%%%%%%%%%%%%%%%
\section{Symplectic orbifolds}\label{sec:2}
%%%%%%%%%%%%%%%%%%%%%%%%%%%%%%%%%%%%%%%%%%%%%%%%%%%%%%%%%%%%%%%

\begin{definition} \label{def:orb-sympl}
A symplectic orbifold $(X,\omega)$ is an orbifold $X$ equipped with an
orbifold $2$-form $\omega\in \Omega^2_{orb}(X)$ such that 
$d\omega=0$ and $\omega^n>0$, where $2n=\dim X$. In particular, it is oriented.

An almost K\"ahler orbifold $(X,J,\omega)$ consists of an orbifold $X$, and orbifold almost complex structure $J$ and
an orbifold symplectic form $\o$ such that $g(u,v)=\o(u,Jv)$ defines an orbifold Riemannian metric with $g(Ju,Jv)=g(u,v)$.

A K\"ahler orbifold is an almost K\"ahler orbifold satisfying the integrability condition that the Nijenhuis tensor $N_J=0$.
This is equivalent to requiring that the changes of charts are biholomorphisms of open sets of $\CC^n$.
\end{definition}

\begin{proposition} \label{almost Kahler-orbifold}
Let $(X,\o)$ be a symplectic orbifold. Then $(X,\o)$ admits an almost K\"ahler orbifold structure $(X, \o, J , {g})$.
\end{proposition}

\begin{proof}
Consider an auxiliary orbifold Riemannian metric $g_0$ on $X$. We define the orbifold endomorphism 
$A \in \End(TX)$ by the requirement $g_0(u,Av)= \omega(u,v)$.
The adjoint of $A$ with respect to $g$ is the orbifold endomorphism $A^* \in \End(TX)$ 
such that $g_0(u,A^*v)=g_0(Au,v)$. We have that $A^*=-A$ since 
$g_0(u,A^*v)=g_0(Au,v)=g_0(v,Au)= \o(v,u)=- \o(u,v)=-g_0(u,Av)=g_0(u,-Av)$.
The orbifold endomorphism $B=A A^*=-A^2$ is symmetric and positive. % endomorphism.
Indeed $g_0(u,Bu)=g_0(A^*u,A^*u) >0$ for $u\neq 0$, and $g_0(u,Bv)=g_0(A^*u,A^*v)=g_0(A^*v,A^*u)= g_0(v,Bu)$.

Let us see that $B$ admits a square root $\sqrt{B}\in \End(TX)$, which is an orbifold endomorphism.
On every chart $\phi:U\to V=U/\Gamma$, $B$ is given by a matrix valued function $B(x)$ on $U$ which is
$\Gamma$-equivariant. At every $x\in U$, it 
has positive eigenvalues and diagonalises, so we can define $\sqrt{B}$ locally 
as the matrix which has the same eigenvectors as $B$ with eigenvalues the (positive) 
square root of the eigenvalues of $B$. We have to see that $\sqrt{B}$ is $\G$-equivariant.
We take a real constant $\mu >0$ so that $\Vert \mu B - \Id \Vert <1$, in some operator norm,
so we have
 $$
 \sqrt{\mu} \sqrt{B}= \sqrt{\mu B}= \Id + \frac{1}{2} \mu B - \frac{1}{8} \mu^2 B^2 + \frac{1}{16} \mu^3 B^3 + \ldots
 $$
by the usual power series expansion of the square root. This 
yields the formula $\sqrt{B}= \frac{1}{\sqrt{\mu}} (\Id + \frac{1}{2} \mu B + \ldots)$.
As $\G$ commutes with $B$, we have that it also commutes with $\sqrt{B}$. 

Now define $J= -(\sqrt{B})^{-1} A$, which is an orbifold endomorphism.
As $\sqrt{B}=\sqrt{-A^2}$ commutes with $A$ by the power series expansion, 
its inverse $(\sqrt{B})^{-1}$ also commutes with $A$, and hence $J$ commutes with 
both $\sqrt{B}$ and $A$. Also $J^2= B^{-1} A^2= (-A^2)^{-1} A^2= - \Id$, so $J$ 
is an orbifold almost complex structure.
As $J^*= A^* \sqrt{B^*}= -A \sqrt{B}= - J$, we have that $g(u,v)=\omega(u,Jv)$ is a
symmetric bilinear orbifold tensor. % which implies that $JJ^*= \Id$.
Moreover 
 $$
 g(u,v)=\omega(u,Jv)=g_0(u,AJv)=g_0(u,(\sqrt{AA^*})^{-1} AA^*v)=g_0(u,\sqrt{AA^*}v),
 $$
which implies that $g$ is positive definite, and hence an orbifold Riemannian metric.
Finally, $J$ is compatible with $\o$ since 
$\o(Ju,Jv)=g(Ju,AJv)=g(J^*J u, A v)=g(u, Av)=\o(u,v)$.
So $(X,\o,{g}, J)$ is an almost K\"ahler orbifold.
\end{proof}

In the case of symplectic orbifolds, the structure of the isotropy set given in Proposition \ref{prop:isotropy set}
can be improved.

In the following, a \emph{$d$-suborbifold} of the orbifold $X$ is defined to be a connected subspace $Y \subset X$ such that
for each $y \in Y$ there is an orbifold chart $(U_\a,V_\a,\phi_\a,\G_\a)$ of $X$ around $y$ such that, calling $U'_\a = U_\a \cap (\RR^d \x \{0\})$, we have that $\phi_\a(U'_\a)=Y \cap V_\a$, $U'_\a$ is a $\G_\a$-invariant set
and moreover $U'_\a/\G_\a \cong Y \cap V_\a$. We say that $d$ is the dimension of $Y$.

\begin{remark}
\begin{enumerate}
\item This is one of many possible definitions of the concept of suborbifold. 
In other contexts, a less restrictive definition
is adopted, and the subspace $\S$ defined above is called a \emph{normalizable suborbifold}, or a \emph{full suborbifold}. See \cite{B-B,M}.

\item However, in this paper we are interested in the case that the ambient orbifold $X$ has \emph{homogeneous isotropy} (see definition \ref{def:HI}). In this case our notion of suborbifold coincides with the one given in \cite{B-B,M}.
\end{enumerate}
\end{remark}

\begin{corollary}\label{cor:overSH}
The isotropy set $\S$ of $(X,\o)$ consists of immersed symplectic suborbifolds $\overline{\S}_H$.
Moreover, if we endow $X$ with an almost K\"ahler orbifold structure $(\o,J,g)$, 
then the $\overline{\S}_H$ are almost K\"ahler suborbifolds.
\end{corollary}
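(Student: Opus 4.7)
The plan is to combine the local linear model for $\overline{\S}_H$ from Proposition \ref{prop:isotropy set} with the almost K\"ahler geometry of Proposition \ref{almost-Kahler-orbifold}. I would fix an almost K\"ahler structure $(\o, J, g)$ on $X$ and, around any point $x_0\in\S$, build a chart by running the proof of Proposition \ref{isometry local model} with the almost K\"ahler metric $g$ in place of $g_{std}$. Exponential normal coordinates then produce a linear chart $U_\a\subset \RR^{2n}$ on which $\G$ acts linearly with $\G<\OO(2n)$. Because $\G$ also preserves the orbifold form $\o$, in particular $\o|_0$ is preserved, and together with preservation of $g|_0$ and the compatibility $\o(u,v)=g(u,Jv)$, this forces the inclusion $\G<\OO(2n)\cap\Sp(2n,\RR)=\UU(n)$ with respect to the identification $T_{x_0}X\cong\CC^n$ induced by $J|_0$.

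Now fix $H<\G$. By Proposition \ref{prop:isotropy set}, $\overline{\S}_H$ is locally modelled on the linear subspace $L_H=\bigcap_{h\in H}\ker(h-\Id)\subset\RR^{2n}$. Since $H$ sits inside $\UU(n)$, the common fixed locus $L_H$ is a complex linear subspace of $\CC^n$, hence $J|_0$-invariant, and $\o|_0$ restricts to a non-degenerate form on $L_H$ by compatibility with $J|_0$ and $g|_0$.

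To upgrade this to an almost K\"ahler structure along all of $L_H$, I would exploit the $\G$-invariance of $J$ as an orbifold tensor. For $h\in H$ and $y\in L_H$ we have $h(y)=y$, so the identity $dh_y\cdot J_y=J_{h(y)}\cdot dh_y$ reduces to $dh_y\cdot J_y=J_y\cdot dh_y$. In the linear chart $dh_y$ coincides with the linear map $h$, and thus $J_y$ commutes with every element of $H$. Since $L_H$ is the $H$-fixed subspace of $T_y U_\a\cong\RR^{2n}$, this commutation forces $J_y$ to preserve $L_H$, and then $\o|_y(u,J_yu)=g|_y(u,u)$ shows that $\o$ stays non-degenerate on $L_H$ at $y$. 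Therefore the restricted triple $(\o,J,g)|_{L_H}$ is an almost K\"ahler structure on the local sheet, and since changes of charts of $X$ preserve the orbifold tensors, these local sheets assemble into an almost K\"ahler suborbifold structure on $\overline{\S}_H$; the symplectic statement follows as a byproduct.

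I expect the main technical obstacle to be arranging the chart from Proposition \ref{isometry local model} compatibly with the almost K\"ahler data so that one actually gets $\G<\UU(n)$, not merely $\G<\OO(2n)$; this is what the choice of the metric $g$ (rather than an arbitrary $\G$-invariant metric) achieves. The word ``immersed'' in the statement is there to absorb the fact, noted in the proof of Proposition \ref{prop:isotropy set}, that conjugate subgroups $\hat H=\g H\g^{-1}$ give fixed loci $L_{\hat H}=\g L_H$ all projecting to the same image in $V_\a$; globally the different sheets of $\overline{\S}_H$ may therefore intersect in $X$, but each sheet is an embedded almost K\"ahler suborbifold within its own chart.
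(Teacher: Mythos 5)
Your proposal is correct and follows essentially the same route as the paper: fix an almost K\"ahler structure from Proposition \ref{almost-Kahler-orbifold}, linearize the chart so that $\G$ preserves the compatible triple at the origin and hence lies in $\UU(n)$, and conclude that $L_H$ is a complex (hence $J$-invariant and $\o$-nondegenerate) subspace. Your extra verification that $J_y$ preserves $T_y L_H$ at points $y\neq 0$ of the fixed locus, via the commutation of $J_y$ with $H$, is just a more explicit rendering of the paper's remark that ``this happens at every point.''
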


\begin{proof}
Put any almost K\"ahler structure $(\o,J,g)$ on $X$ as provided by Proposition \ref{almost Kahler-orbifold}.
Fix a chart $(U,V,\phi,\G)$ with $\G < \OO(n)$, and $U \subset \RR^{2n}$ a neighborhood of $0$.
As $J$ is an orbifold almost complex structure, $\G$ preserves $J$, in particular $d_0\g \circ J_0=J_0\circ d_0\g$
for all $\g\in \G$. As $\g$ is linear, we have that $d_0\g=\g$, hence $\g$ preserves the complex structure 
of $\CC^n=(\RR^{2n},J_0)$. This means that $\G<\GL(n,\CC)\cap \OO(2n)=\UU(n)$.

As proved in Proposition \ref{prop:isotropy set}, the isotropy set $\S\cap V$ is the union of 
$\overline{\S}_H \cap V=\phi(U\cap L_H)$, for some subgroups $H<\G$. As $L_H=\bigcap_{\g\in H} L_{\g}$,
where $L_\g=\ker(\g-\Id)$, and $\g$ are complex endomorphisms, we have that $L_H$ is 
a complex linear subspace of $\CC^n$. This proves that $J_0$ leaves invariant $T_0 \overline{\S}_H=L_H$,
the (orbifold) tangent space of $\overline{\S}_H$ at the origin. This happens at every point, hence 
$\overline{\S}_H$ is an almost K\"ahler orbifold. In particular, it is a symplectic suborbifold of $(X,\o)$.
\end{proof}

The following result is a Darboux theorem for symplectic orbifolds.

\begin{proposition} \label{prop:Darboux}
Let $(X,\o)$ be a symplectic orbifold and $x_0 \in X$. There exists an orbifold chart $(U, V,\phi, \G)$ around $x_0$
with local coordinates $(x_1,y_1,\ldots, x_n,y_n)$ such that the symplectic form 
has the expression $\o= \sum dx_i \wedge dy_i$ and $\G < \UU(n)$ is a subgroup of the unitary group.
\end{proposition}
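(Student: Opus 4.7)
The plan is to combine the linearization results already established with a $\G$-equivariant Moser trick.

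First, by Proposition \ref{isometry local model} I take an orbifold chart $(U,V,\phi,\G)$ about $x_0=\phi(0)$ with $U\subset\RR^{2n}$ a $\G$-invariant neighbourhood of $0$ and $\G<\OO(2n)$ acting linearly. Endowing $X$ with a compatible almost K\"ahler structure $(\o,J,g)$ via Proposition \ref{almost-Kahler-orbifold}, the argument in the first paragraph of the proof of Corollary \ref{cor:overSH} shows that each $\g\in\G$ commutes with $J_0$ and preserves $g_0$, so after choosing a basis of $\RR^{2n}$ that is orthonormal for $g_0$ and complex for $J_0$ we obtain $\G<\GL(n,\CC)\cap\OO(2n)=\UU(n)$. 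In such a basis the constant form at the origin, $\o_0:=\o|_0=g_0(\cdot,J_0\cdot)$, is precisely $\o_{std}=\sum_{i=1}^n dx_i\wedge dy_i$.

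Next I perform an equivariant Moser interpolation. Shrinking $U$ to a $\G$-invariant open ball $B_\e(0)$ (invariant because $\G$ acts orthogonally), the family $\o_t=(1-t)\o_{std}+t\o$ consists of closed $\G$-invariant $2$-forms that coincide with $\o_{std}$ at the origin, so they are non-degenerate on a possibly smaller $\G$-invariant ball for every $t\in[0,1]$. Using the radial Poincar\'e homotopy operator
\begin{equation*}
\alpha_x=\int_0^1 t\,\bigl(\iota_R(\o-\o_{std})\bigr)\big|_{tx}\,dt,
\end{equation*}
where $R$ is the Euler vector field, one obtains $d\alpha=\o-\o_{std}$ with $\alpha(0)=0$; since $R$ and $\o-\o_{std}$ are $\G$-invariant (because $\G$ acts linearly and both forms are $\G$-invariant), so is $\alpha$. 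Defining $X_t$ by $\iota_{X_t}\o_t=-\alpha$ yields a $\G$-equivariant time-dependent vector field vanishing at $0$, whose flow $\psi_t$ is defined and $\G$-equivariant on a smaller $\G$-invariant ball for all $t\in[0,1]$ and satisfies $\psi_1^*\o=\o_{std}$ by the standard Moser computation.

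Replacing the original chart $\phi$ by $\phi\circ\psi_1$ then produces the desired Darboux chart: it is still an orbifold chart because $\psi_1$ is $\G$-equivariant and fixes the origin, its pull-back of $\o$ equals $\sum dx_i\wedge dy_i$, and the induced $\G$-action on the new coordinates is the original linear action of $\G<\UU(n)$. The main subtlety is to keep every choice in the Moser argument $\G$-equivariant; this is handled cleanly by the radial Poincar\'e homotopy formula---rather than by averaging a generic primitive---since it automatically produces a $\G$-invariant primitive vanishing at the origin, the latter being exactly what is needed for the Moser vector field to vanish at $x_0$ and for its flow to be defined on a full neighbourhood for $t\in[0,1]$.
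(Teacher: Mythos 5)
Your proof is correct and follows essentially the same route as the paper: linearize the isotropy to $\G<\UU(n)$ via the compatible almost K\"ahler structure, then run a $\G$-equivariant Moser interpolation between $\o$ and the constant form $\o|_0$. The only (harmless) difference is that you obtain the invariant primitive vanishing at the origin from the radial Poincar\'e homotopy operator, whereas the paper takes an arbitrary primitive, averages it over $\G$, and subtracts its value at $0$.
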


\begin{proof}
Take an initial orbifold chart $(U,V,\psi, \G)$ with $\G < \UU(n)$ and $x_0=\psi(0)$, possible
by Corollary \ref{cor:overSH}. Consider the evaluation of $\o$ at the origin $\o|_0$. 
We take a basis of $\RR^{2n}$ such that $\o|_0$ has standard form, that is
$\o|_0=\sum dx_i\wedge dy_i$. Let $\o_0$ be the symplectic form with constant coefficients
which equals to $\o|_0$.
Since $U$ is contractible we have that $\o - \o_0=d \mu$, for some $\mu \in \O^1(V)$.
We can suppose that $\mu$ is $\G$-invariant, since otherwise we put 
$\tilde{\mu}=\frac{1}{|\G|} \sum_{\g \in \G} \g^* \mu$ and $\tilde{\mu}$
also satisfies 
 $$
 d \tilde{\mu}= \frac{1}{|\G|} \sum_{\g \in \G} \g^* d \mu= \frac{1}{|\G|} \sum_{\g \in \G} \g^*( \o - \o_0) = \o - \o_0 \, .
 $$
We can further suppose that $\mu|_0=0$ vanishes as a $1$-form, since otherwise we put $\tilde{\mu}= \mu - \mu|_0$
which also satisfies $d \tilde{\mu} = \o - \o_0$ and $\tilde{\mu}$ is $\G$-equivariant.

Now we apply Moser trick. Consider $\o_t= t \o + (1-t) \o_0= \o_0 + t \, d \mu$.
Consider a vector field $X_t$ such that $\iota_{X_t} \o_t= - \mu$.
Let us call $\varphi_t$ the flow of the vector field $X_t$ at time $t$,
which satisfies $\frac{d}{dt}\varphi_t(x)=X_t|_{\varphi_t(x)}$ for each $x \in U$.
Then for each $s$,
 \begin{align*}
 \frac{d}{dt}\Big|_{t=s} \varphi_t^* \o_t &= \frac{d}{dt}\Big|_{t=s} \varphi_t^* \o_s 
 + \varphi_s^*\left( \frac{d}{dt}\Big|_{t=s} \o_t\right)
 = \varphi_s^*(\mathcal{L}_{X_s} \o_s) +  \varphi_s^* (d \mu) \\
 &= \varphi_s^* \left(d(\iota_{X_s} \o_s) + \iota_{X_s} d \o_s\right) + \varphi_s^* (d \mu) 
 = -\varphi_s^*( d\mu) + \varphi_s^* (d \mu) =0,
 \end{align*}
using Cartan formula for the Lie derivative $\mathcal{L}_X= d \iota_X + \iota_X d$.
This implies that $\o_0= \varphi_0^* \o_0 = \varphi_1^* \o_1= \varphi_1^* \o$.
The change of coordinates is then given by the diffeomorphism $\varphi:= \varphi_1$ 
which is defined in some neighborhood of $0 \in U$.
Recall that, since $\mu$ vanishes at $0 \in U$, $\varphi_t(0)=0$ for all $t$, so $\varphi(0)=0$.
Finally, as $\mu$ and $\o_t$ are $\G$-equivariant, and $\iota_{X_t}\o_t=-\mu$, we have
that the vector fields $X_t$ are $\G$-equivariant. Therefore the flow $\varphi_t$ are
$\G$-equivariant diffeomorphisms, and so $\varphi$ is $\G$-equivariant.
Summarising, we have a diffeomorphism $\varphi: U' \to U$ between two neighborhoods of $0$ 
and $\varphi^*\o = \o_0$ is a constant
symplectic form on $U'$. Moreover, since $\varphi \g \varphi^{-1}= \g$ for all $\g \in \G$, the $\G$-action induced
by $\varphi$ on $U'$ is the same as on $U$. The sought orbifold chart is $(U',V,\varphi\circ \psi,\G)$.
\end{proof}

\begin{corollary} \label{Complex local model}
Let $(X,\o)$ be a symplectic orbifold. Then $(X,\o)$ admits a \emph{Darboux orbifold atlas}, i.e.\
an atlas $\{(U_\a,V_\a,\phi_\a, \G_\a)\}$ where all the isotropy groups $\G_\a < \UU(n)$ 
and the expression in coordinates of $\o$ on each $U_\a \subset \RR^{2n}$
is the canonical form of $\RR^{2n}$, i.e.\
$\o|_{U_\a}= \sum d x_j \wedge d y_j= \frac{i}{2} \sum dz_j \wedge d\bar{z}_j$.

Moreover, if $\overline{\S}_H \subset X$ is an isotropy suborbifold of codimension $2k$, 
we can arrange that for each open set $V_\a$ which intersects $\overline{\S}_H$,
the intersection $\overline{\S}_H \cap V_\a$ is given by $\{z_1=0, \ldots, z_k=0\} \subset U_\a$.
\end{corollary}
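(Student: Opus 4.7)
The plan is to combine Proposition \ref{prop:Darboux} with a unitary linear change of coordinates adapted to the locus $\overline{\S}_H$. The first assertion is immediate: applying Proposition \ref{prop:Darboux} at each point of $X$ yields a Darboux chart around that point with $\G_\a < \UU(n)$ and $\o|_{U_\a}$ in canonical form, and the resulting collection of charts covers $X$ and is compatible with the orbifold structure.

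For the second assertion, I would fix $x_0 \in \overline{\S}_H \cap V_\a$ and take a Darboux chart $(U,V,\phi,\G)$ around $x_0$ provided by Proposition \ref{prop:Darboux}. Following the argument in Proposition \ref{prop:isotropy set}, after shrinking $V$ so that only the component of $\overline{\S}_H$ through $x_0$ meets it, one has $\overline{\S}_H \cap V = \phi(L_H \cap U)$ for a fixed representative subgroup $H<\G$, where $L_H = \bigcap_{h \in H} \ker(h - \Id) \subset \RR^{2n}$. Since $\G < \UU(n)$ acts $\CC$-linearly on $\RR^{2n} = \CC^n$ with respect to the standard complex structure, each $\ker(h-\Id)$, and hence $L_H$, is a complex linear subspace; its complex codimension is $k$, matching the real codimension $2k$ of $\overline{\S}_H$. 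Choosing a unitary orthonormal basis of $\CC^n$ whose last $n-k$ vectors span $L_H$ produces a map $T \in \UU(n)$ sending the standard coordinate subspace $\{z_1 = \cdots = z_k = 0\}$ onto $L_H$. Because $\UU(n)$ preserves the canonical symplectic form and is closed under conjugation, the chart $(T^{-1}(U),V,\phi\circ T, T^{-1}\G T)$ keeps $\o$ in standard form and the isotropy group inside $\UU(n)$, while $\overline{\S}_H \cap V$ is now cut out precisely by $\{z_1 = \cdots = z_k = 0\}$.

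The only delicate step is ensuring that $\overline{\S}_H \cap V$ is genuinely described by a single linear piece $L_H$. When the isotropy at $x_0$ is strictly larger than $H$, different $\G$-conjugates of $H$ produce a priori different fixed subspaces of $U$, but by the observation in the proof of Proposition \ref{prop:isotropy set} conjugate subgroups yield the same image in $V = U/\G$; taking a single representative $H$ and shrinking $V$ to a connected neighbourhood of $x_0$ then suffices. Beyond this bookkeeping, no substantive obstacle arises — the corollary is essentially Proposition \ref{prop:Darboux} together with a unitary linear algebra adjustment.
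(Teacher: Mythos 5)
Your proposal is correct and follows essentially the same route as the paper: apply Proposition \ref{prop:Darboux} to get the Darboux atlas, identify $\overline{\S}_H \cap V_\a$ with the image of the complex linear subspace $L_H$ fixed by the (unitary) isotropy group, and perform a unitary change of basis sending $L_H$ to $\{z_1=\cdots=z_k=0\}$, which preserves the canonical form since $\UU(n)<\Sp(2n,\RR)$. The extra bookkeeping you supply about conjugate subgroups is a fair elaboration but does not change the argument.
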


\begin{proof}
By Proposition \ref{prop:Darboux}, there is a Darboux atlas as required.
Let us see that it can be adapted to the submanifold $D$. For each chart $(U_\a,V_\a,\phi_\a,\G_\a)$ 
intersecting $D$, $D \cap V_\a=\phi_\a(L_H\cap U_\a)$, where $L_H \subset \CC^n$ is a complex linear subspace, 
being the fixed subset of $\G_\a$. We can then take a unitary basis of $\CC^n$ so that 
$L_H=\{z_1=0, \ldots, z_k=0\}$, and clearly the symplectic form is again $\omega_0$ since $\UU(n)<\Sp(2n,\RR)$.
\end{proof}

%%%%%%%%%%%%%%%%%%%%%%%%%%%%%%%
\section{Tubular neighbourhood of the isotropy set}
%%%%%%%%%%%%%%%%%%%%%%%%%%%%%%%

From now on we restrict to the case where the isotropy locus $\S$ is already a smooth submanifold.

\begin{definition} \label{def:HI}
We say that an isotropy subset $\overline{\S}_H$ is \emph{homogeneous} if $\overline{\S}_H=\S_H$.
That is, all its points have isotropy equal to $H$.

An orbifold $X$ is called HI (abbreviature for homogeneous isotropy) if all
its isotropy subsets are homogeneous.
\end{definition} 

Note that by Proposition \ref{prop:isotropy set}, if $\overline{\S}_H$ is homogeneous, then it is a submanifold in the sense that the intrinsic isotropy groups of the suborbifold are the identity,
hence it has an orbifold structure without isotropy, i.e. a manifold structure.
From now on we work, unless otherwise stated, with a HI orbifold $X$.

\begin{lemma}
If $\overline{\S}_H$ is an homogeneous isotropy set, then it is isolated, that is, no other
isotropy set intersects it. Moreover, around any point $x_0\in \overline{\S}_H$ we
have a chart $(U,V,\phi,H)$, where $U \cong U'\x U''$, $U'\subset \RR^d$,
$U''\subset \RR^{n-d}$, $H<\OO(n-d)$, where $d$ is
the dimension of $\overline{\S}_H$,  $V\cong U' \x (U''/H)$, 
and $\overline{\S}_H$ corresponds to $U'\x \{0\}$.

If $(X,\o)$ is a symplectic orbifold of dimension $2n$ and $\overline{\S}_H$ is an homogeneous
isotropy set of dimension $2d$, then for every $x \in \overline{\S}_H$ there is a 
Darboux chart $(U,V,\phi,H)$ around $x$, where 
$U \cong U'\x U''$, $U'\subset \CC^d$,
$U''\subset \CC^{n-d}$, $H<\UU(n-d)$,  $V\cong U' \x (U''/H)$, 
and $\overline{\S}_H$ corresponds to $U'\x \{0\}$.
\end{lemma}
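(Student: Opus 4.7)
The plan is to use Proposition \ref{isometry local model} (or Corollary \ref{Complex local model} in the symplectic case) to linearize the $H$-action on $\RR^n$ (resp.\ $\CC^n$) so that the fixed locus of $H$ is a linear subspace of dimension $d$; an orthogonal (resp.\ unitary) decomposition of the ambient space then provides the product chart. Concretely, pick a chart $(U,V,\phi,\G)$ at $x_0$ with $\G<\OO(n)$ and $\phi(0)=x_0$, so that every $\g\in\G$ fixes $0$; since the isotropy at $x_0$ is $H$, one has $\G=H$. Let $L_H=\{v\in\RR^n:hv=v\text{ for all }h\in H\}$. By the proof of Proposition \ref{prop:isotropy set}, $\phi(L_H\cap U)$ coincides locally with $\overline{\S}_H\cap V$, so $\dim L_H=d$. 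Because $H<\OO(n)$, the orthogonal decomposition $\RR^n=L_H\oplus L_H^{\perp}$ is $H$-invariant: $H$ acts trivially on $L_H\cong\RR^d$ and faithfully via $H<\OO(n-d)$ on $L_H^{\perp}\cong\RR^{n-d}$. Shrinking $U$ to a product of two small balls $U'\times U''\subset L_H\oplus L_H^{\perp}$ (automatically $H$-invariant) yields $V\cong U'\times(U''/H)$ with $\overline{\S}_H\cap V$ corresponding to $U'\times\{0\}$.

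The isolation claim follows from HI: since $\overline{\S}_{H'}=\S_{H'}$ is closed for every $H'$, if a sequence in $\S_{H'}$ accumulated at a point of $\S_H$, the limit would lie in $\S_{H'}$ by closedness while also having isotropy $H$, forcing $H\cong H'$. Equivalently, any point of $\overline{\S}_H\cap\overline{\S}_{H'}$ has isotropy simultaneously isomorphic to $H$ and to $H'$, hence $H\cong H'$. After further shrinking $V$ we therefore have $V\cap\overline{\S}_{H'}=\emptyset$ for every $H'\neq H$.

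For the symplectic statement, apply Corollary \ref{Complex local model} to obtain a Darboux chart $(U,V,\phi,H)$ at $x_0$ with $H<\UU(n)$ and $\o|_U=\frac{i}{2}\sum dz_j\wedge d\bar{z}_j$. By Corollary \ref{cor:overSH}, the fixed subspace $L_H=T_0\overline{\S}_H\subset\CC^n$ is a complex subspace of complex dimension $d$. Pick a unitary basis of $\CC^n$ whose first $d$ vectors span $L_H$; the change of basis lies in $\UU(n)$, so after applying it the group $H$ (conjugated, still a subgroup of $\UU(n)$) fixes the first $d$ coordinates and the canonical form $\o_0$ is preserved. The decomposition $\CC^n=\CC^d\oplus\CC^{n-d}$ is now $H$-invariant, $H$ acts trivially on $\CC^d$ and via $H<\UU(n-d)$ on $\CC^{n-d}$, and shrinking to a product box produces the claimed Darboux product chart. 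The only subtle point is the passage from a linear orthogonal/unitary splitting of the ambient space to an actual product neighborhood of $0$, which is resolved by replacing any $H$-invariant open set about the origin by a product of two small balls, one in each summand.
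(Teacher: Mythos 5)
Your proof is correct and follows essentially the same route as the paper's: linearize via Proposition \ref{isometry local model} (resp.\ Corollary \ref{Complex local model}), split $\RR^n=L_H\oplus L_H^{\perp}$ (resp.\ $\CC^n=L_H\oplus L_H^{\perp}$ unitarily) with $H$ acting trivially on $L_H$ and orthogonally (resp.\ unitarily) on the complement, and shrink to a product of balls. You in fact supply more detail than the paper does — in particular the isolation argument via closedness of the homogeneous strata, which the paper leaves implicit — but the underlying idea is identical.
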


\begin{proof}
We have $\overline{\S}_H\cap V=\phi(L_H\cap U)$. The linear subspace $L_H$ is $d$-dimensional,
so we can write $\RR^n=L_H\oplus (L_H)^\perp$. Note that $\G$ fixes $L_H$, so it acts
on $(L_H)^\perp \cong \RR^{n-k}$. Moreover $\G=H$. The result follows.

The statement for symplectic orbifolds follows analogously using Corollary \ref{Complex local model}.
\end{proof}

To understand the structure around an homogeneous isotropy subset, let us introduce the 
notion of orbifold bundle, as bundle of orbifolds over a manifold.
For a space with a geometric structure $(M,G)$ we understand a smooth manifold $M$ with a Lie
group $G$ acting on $M$. We call $G$ the automorphism group of the structure and write
$G=\Aut(M)$.

\begin{definition} \label{def:orbifold-bundle}
Let $M$ be a space with some geometric structure and let $\G < \Aut(M)$
be a finite subgroup of automorphisms of $M$, and let $B$ be a smooth manifold.
An orbifold bundle $E$ with fiber $F=M/\G$ and base space $B$ consists of an orbifold $E$
endowed with an open cover $\{ V_\a \}$ and with orbifold charts 
$\phi_\a: U_\a \times M \to V_\a$ so that:
\begin{enumerate}
\item The groups $\G_\a<\Aut(M)$ act on $U_\a \x M$ as $\g (x,m)=(x,\g m)$ for all $\g \in \G_\a$.

\item All the groups $\G_\a$ are conjugated to $\G$ by some automorphism of $M$, so all the quotients
$M/\G_\a$ are isomorphic to $F=M/\G$.

\item The changes of charts of this atlas of $E$ are maps of the form
 $$
 \varphi_{\a \b}: \imath_{\d \a} (U_\d) \times M \to \imath_{\d \b} (U_\d) \times M, (x,m) \to (\psi_{\a \b}(x), A_{\a \b}(x) m),
 $$
with $A_{\a \b}: \imath_{\d \a} (U_\d) \to \Aut(M)$ is a smooth map taking values in the group of automorphisms of $M$.
\end{enumerate}
\end{definition}

Note that from the definition of orbifold, the maps $A_{\a \b}$ are compatible with the actions of the local groups $\G_\a$
and $\G_\b$ in the sense that $A_{\a \b}(x) \g m= \rho_{\a \b}(\g) A_{\a \b}(x) m$ for all $\g \in \G_\a$, where
$\rho_{\a\b}=\rho_{\d\b}\circ \rho_{\d\a}^{-1}:\G_\a \to \G_\b$ are all group isomorphisms. Note
that it must be $\rho_{\a \b}(\g)=A_{\a \b}(x) \g A_{\a \b}(x)^{-1}$, so (2) in Definition \ref{def:orbifold-bundle} is
automatic.

An orbifold bundle satisfies that $E$ is topologically a fiber bundle of the form $F=M/\G \to E \to B$. 
The transition functions are induced by $A_{\a\b}$ on $M/\G$.

A vector orbifold bundle corresponds to the case where $M$ is a (real or complex)
vector space and $\Aut(M)$ is a subgroup of the group of linear maps of $M$.

Now let $(X,\o)$ be a HI symplectic orbifold, and let 
$D=\overline{\S}_H$ be an homogeneous isotropy set of dimension $2d$.
Let $2k=2n-2d$ be the codimension of $D$.
The orbifold tangent space $TX$ is given in local charts $(U,V,\phi,\G)$ by 
$T_xU$ with the action of $\G_x<\GL(T_xU)$ induced by $d_x\g$, for
$\g\in \G$ acting on $U$. If $x\in D$, then $T_xU$ is a symplectic
vector space and $T_xD$ is the fix set of $\G_x$. The symplectic
orthogonal $(T_xD)^{\perp \omega}\cong \RR^{2k}$ has the action
induced by $\G_x$, and we define the orbifold normal space as
$$
 \nu_{D,x}=(T_xD)^{\perp \omega}/\G_x\, .
 $$
The normal bundle $\nu_D$ is the union of all $\nu_{D,x}$, for $x\in D$.

\begin{proposition}\label{prop:normal}
Let $(X,\o)$ be a HI symplectic  orbifold, and let $D\subset X$ be an isotropy submanifold. Then the
normal bundle $\nu_D$ admits the structure of a symplectic orbifold vector bundle over $D$.
\end{proposition}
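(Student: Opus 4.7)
The strategy is to use the adapted Darboux charts around $D$ to construct local orbifold trivialisations of $\nu_D$, and to verify that their changes of charts have the linear-in-fiber form required by Definition \ref{def:orbifold-bundle}, with fiber $M=\CC^k$ and structure group $\Sp(2k,\RR)\supset H$.

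First I would cover $D$ by the adapted Darboux charts $(U_\a,V_\a,\phi_\a,H)$ supplied by Corollary \ref{Complex local model} together with the lemma immediately preceding this proposition, so that $U_\a=U'_\a\x U''_\a\subset\CC^d\x\CC^k$, $\o|_{U_\a}$ is the standard symplectic form, $D\cap V_\a=\phi_\a(U'_\a\x\{0\})$, and $H<\UU(k)$ acts trivially on $U'_\a$ and linearly on $U''_\a$. For $x=\phi_\a(z',0)\in D$ these charts identify $T_xD\cong\CC^d\oplus\{0\}$ and $(T_xD)^{\perp\o}\cong\{0\}\oplus\CC^k$, whence $\nu_{D,x}\cong\CC^k/H$. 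This gives an orbifold bundle chart of $\nu_D$ by
$$
\tilde\phi_\a:U'_\a\x\CC^k\too\nu_D,\qquad (z',v)\longmapsto [v]\in\nu_{D,\phi_\a(z',0)},
$$
with $H$ acting as $h\cdot(z',v)=(z',hv)$, since $H$ fixes $U'_\a\x\{0\}$ pointwise.

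Next I would compute the transition between two overlapping such charts $\a,\b$. A change of chart $\psi_{\a\b}$ is a symplectomorphism sending $D$ to $D$ and intertwining the actions of $H$ up to an isomorphism $\rho_{\a\b}$. Preservation of $D$ makes $d_{(z',0)}\psi_{\a\b}$ block triangular in the splitting $\CC^d\oplus\CC^k$, and the symplectic condition applied to the direct-sum standard form then forces the off-diagonal block to vanish, so the differential is block diagonal with blocks in $\Sp(2d,\RR)$ and $\Sp(2k,\RR)$. Setting
$$
A_{\a\b}(z'):=d_{(z',0)}\psi_{\a\b}\big|_{\{0\}\oplus\CC^k}\in\Sp(2k,\RR),
$$
I take the induced change of trivialisation of $\nu_D$ to be $\tilde\psi_{\a\b}(z',v)=\bigl(\psi_{\a\b}|_D(z'),A_{\a\b}(z')v\bigr)$, which is exactly of the form prescribed by item (3) of Definition \ref{def:orbifold-bundle}. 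Differentiating $\psi_{\a\b}\circ h_\a=\rho_{\a\b}(h_\a)\circ\psi_{\a\b}$ at $(z',0)$ gives $A_{\a\b}(z')\,h_\a=\rho_{\a\b}(h_\a)\,A_{\a\b}(z')$, which is the compatibility of the $A_{\a\b}$ with the local groups, while the cocycle condition on triple overlaps follows from the chain rule applied to $\psi_{\a\g}=\psi_{\b\g}\circ\psi_{\a\b}$. Because each $A_{\a\b}\in\Sp(2k,\RR)$, the standard symplectic form on $\CC^k$ glues into a well-defined symplectic structure on the fibers of $\nu_D$.

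The main obstacle I anticipate is bookkeeping. The chart $\tilde\phi_\a$ is not the restriction of the ambient Darboux chart $\phi_\a$ to $U'_\a\x\CC^k$ (the latter is not linear in $v$), but rather its first-order normal approximation along $D$, so one must check that replacing $\phi_\a$ by this linearisation still produces honest orbifold bundle charts and that the construction is consistent with the auxiliary third chart $U_\d$ that mediates changes of orbifold charts. Once this is handled, the remaining verifications are routine from the chain rule and the symplectic condition, and $\nu_D$ acquires the structure of a symplectic orbifold vector bundle over $D$.
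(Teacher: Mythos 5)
Your proposal is correct and follows essentially the same route as the paper: adapted Darboux charts along $D$ give local trivialisations $U'_\a\x\CC^k\to\nu_D|_{U'_\a}$ with $\G_\a<\UU(k)$ acting on the fiber, and the transition functions are $A_{\a\b}(x)=\frac{\bd\psi''_{\a\b}}{\bd y}\big|_{(x,0)}\in\Sp(2k,\RR)$, with equivariance obtained by differentiating the intertwining relation. Your extra step verifying that the differential is block diagonal (preservation of $D$ plus the symplectic condition) is a point the paper leaves implicit, but it is the same argument in substance.
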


\begin{proof}
We take a collection of symplectic charts $(U_\a,V_\a,\phi_\a,\G_\a)$ adapted to $D$,
given by Corollary \ref{Complex local model}. Denote $2d=\dim D$
and let $2k=2n-2d$ be the codimension of $D$.  Then 
$U_\a = U_\a' \x U_\a''$, where $U_\a' \subset \CC^{d}$,
$U_\a'' \subset \CC^{k}$, $\G_\a < \UU(k)$, and $\phi_\a: V_\a\cong U_\a' \x (U_\a''/\G_\a)=V'_\a \x V''_\a$. 
Then $\phi_\a:U_\a'\to V_\a'$ is a diffeomorphim, and $\{V_\a'\}$ is a covering by charts of $D$.

For any $p \in U_\a' \subset D$, the tangent space $T_pD=\CC^d\x\{0\}$ and
$(T_pD)^{\perp\omega}=\{0\}\x \CC^k$. Therefore $\nu_D|_{U_\a'}\cong U_{\a}' \x (\CC^k/\G_\a)$,
where  $\nu_D|_{U_\a'}$ denotes the collection of normal spaces to points $p \in U_{\a}'$. 
Then there is an orbifold chart
  $$
  U_\a' \x \CC^k  \to \nu_D|_{U_\a'}\, ,
 $$
where $\G_\a$ acts on $\CC^k$ by the inclusion $\G_\a <\UU(k)$. The fiber is  $M=\CC^k$ 
with $\Aut(M)=\UU(k)$.
Let us see that the orbifold changes of charts satisfy (3) in Definiton \ref{def:orbifold-bundle}. 
By Definition \ref{definition orbifold}, the change of charts for $U_\a$ and $U_\b$ is given by a
map 
 $$
 \psi_{\a\b}: \imath_{\d\a}(U_\d'\x U_\d'') \to \imath_{\d\b}(U_\d'\x U_\d''), \quad 
 \psi_{\a\b}(x,y)=( \psi_{\a\b}'(x,y), \psi_{\a\b}''(x,y)).
 $$
The group homomorphisms
$\rho_{\d \a}: \G_\d \inc \G_\a$ and 
$\rho_{\d \b}: \G_\d \inc \G_\b$ are isomorphisms (since all points have the same isotropy), so the
map $\rho_{\a\b}= \rho_{\d\b}\circ\rho_{\d\a}^{-1}: \G_\a \to \G_\b$ is an isomorphism.
The map $\psi_{\a\b}$ satisties $\psi_{\a\b}(x,\g y)=\rho_{\a\b}(\g)(\psi_{\a\b}(x,y))$, i.e.
 \begin{equation}\label{eqn:1}
 \psi_{\a\b}''(x,\g y)=\rho_{\a\b}(\g) \psi_{\a\b}''(x,y),
 \end{equation}
for $\g\in \G_\a$. 
Take a point $x=(x,0)\in U_\a'\subset U_\a$. The map at the tangent space $T_xX$ is
given by $(d\psi_{\a\b})_{(x,0)}$. Therefore the induced map on $(T_xD)^{\perp \omega}=\{0\}\x \CC^k$
is given by the differential in the direction of $y$, which is
 $$
 A_{\a\b}(x)=\frac{\bd \psi_{\a\b}''}{\bd y}\Big|_{(x,0)}\, .
 $$
By differentiating (\ref{eqn:1}), we have $A_{\a\b}(x)\g m=\rho_{\a\b}(\g) A_{\a\b}(x)m$, for
$m\in \CC^k$. Note that $A_{\a\b}(x)\in \Sp(2k,\RR)$, since $\psi_{\a\b}$ are symplectomorphisms.
We consider the geometric space
$M=\CC^k$ with group $\Aut(M)=\Sp(2k,\RR)$. This completes the proof.
\end{proof}

\begin{proposition}[Tubular neighbourhood for orbifolds] \label{prop:tubular}
Let $X$ be an orbifold and $D \subset X$ an homogeneous isotropy submanifold.
Then there exists a tubular neighborhood of $D$ in $X$ which is diffeomorphic (as orbifolds) 
to a neighborhood of the zero section of the orbifold normal bundle $\n_D$.
\end{proposition}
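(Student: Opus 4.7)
The plan is to mimic the classical Riemannian proof of the tubular neighbourhood theorem, carried out in $\Gamma$-equivariant local charts and glued using the fact that the orbifold exponential map is globally well defined.

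First, equip $X$ with an orbifold Riemannian metric $g$, whose existence was established earlier. Then on any local chart $(U,V,\phi,\G)$ adapted to $D$ (as provided by the previous lemma), we have $U\cong U'\x U''$ with $\G$ acting only on the $U''$ factor by elements of $\OO(2k)$, and $D\cap V=\phi(U'\x\{0\})$. The metric $g$ lifted to $U$ is $\G$-equivariant, so each element of $\G$ acts as an isometry and hence commutes with the exponential map $\exp_x\colon T_xU\to U$ for every $x\in U'$. Consequently the map
\[
\tilde{\Phi}_\a\colon U'_\a\x U''_\a\longrightarrow U_\a,\qquad (x,v)\longmapsto \exp_x(v),
\]
where we interpret $v$ as an element of the orthogonal complement $(T_xD)^{\perp}\cong U''_\a$ via $g$, is smooth and $\G_\a$-equivariant, and its differential at any $(x,0)$ is the identity. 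Therefore $\tilde{\Phi}_\a$ is a diffeomorphism from a neighbourhood of $U'_\a\x\{0\}$ onto a neighbourhood of $D\cap V_\a$, and descends to an orbifold diffeomorphism $\Phi_\a$ from a neighbourhood of the zero section of $\nu_D|_{V'_\a}$ onto a neighbourhood of $D\cap V_\a$.

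Next I glue the local maps $\Phi_\a$ into a single orbifold map $\Phi\colon \nu_D\to X$. The key point is that in overlaps, geodesics are intrinsic to $g$, so $\tilde{\Phi}_\b\circ (\psi_{\a\b}\x A_{\a\b})=\psi_{\a\b}\circ \tilde{\Phi}_\a$ wherever both are defined, where $\psi_{\a\b}$ is the change of chart of $X$ and $A_{\a\b}$ is the induced change of chart of $\nu_D$ produced in the proof of Proposition \ref{prop:normal}. This guarantees that the $\Phi_\a$ assemble into a well defined orbifold map $\Phi$ on a neighbourhood of the zero section of $\nu_D$, whose restriction to the zero section is the identity on $D$.

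Finally, I need to produce an open neighbourhood $\cU\subset \nu_D$ of the zero section on which $\Phi$ is injective, so that it becomes a diffeomorphism onto its image. Locally this is automatic because each $\tilde{\Phi}_\a$ is a diffeomorphism on a small enough tube. Using a locally finite refinement of the cover $\{V_\a\}$ of a neighbourhood of $D$, an orbifold partition of unity as in Proposition \ref{partition unity}, and a continuous positive function $\e\colon D\to(0,\infty)$ chosen so that the disc bundle $\cU=\{(x,v)\in\nu_D:\|v\|_g<\e(x)\}$ lies inside every chart where $\tilde{\Phi}_\a$ is a diffeomorphism and is mapped injectively (shrink $\e$ further if points from different charts collide; this is possible because $D$ is closed and the $\Phi_\a$ agree on overlaps), we obtain the desired tubular neighbourhood. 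The main obstacle is this last globalisation step: ensuring injectivity of $\Phi$ on a full open neighbourhood of the zero section, which is handled by the standard argument shrinking $\e$ locally and appealing to the fact that $\Phi$ already is an orbifold diffeomorphism chart by chart and agrees on overlaps.
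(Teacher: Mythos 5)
Your proposal is correct and follows essentially the same route as the paper: pick an orbifold Riemannian metric, use the $\G$-equivariance of the exponential map (since the local isotropy groups act by isometries) to define the map chart by chart, note that geodesics are intrinsic so the local maps glue, and shrink to a neighbourhood of the zero section where the map is an orbifold diffeomorphism. Your write-up is in fact somewhat more explicit than the paper's about the gluing over overlaps and the final injectivity/shrinking step, which the paper dispatches in one sentence.
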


\begin{proof}
Consider an orbifold Riemannian metric $g$ for $X$.
We use the exponential map associated to the metric to find the desired diffeomorphism.
Take the normal bundle $\nu_D=\{(x,u) | u \in (T_{(x,0)}D)^{\perp} \}$ and let 
$D=D \times \{0\}\subset\nu_D$ be the zero section.
Define $\exp: \nu_D \to U/\G \subset X$ by $\exp(x,[u]) = [\a_{((x,0),u)}(1)]$, where $\a_{((x,0),u)}$ is
the geodesic from $(x,0) \in U$ with direction $u$. 
The brackets stand for the equivalence classes modulo the local isotropy groups.
We have to see that the map $\exp$ is defined locally in each orbifold chart,
$\exp:\nu_D|_{U'}\to U/\G=U'\x (U''/\G)$,
and it is $\G$-equivariant. The isotropy groups $\G$ act by isometries on the orbifold charts
and hence commute with the exponential map, so $\exp(x,\g u)=\g (\exp (x,u))$ for $\g\in \G$. 
There are open sets $\cU,\cV$ with $D\subset \cU\subset \n_D$, $D\subset \cV \subset M$, so
that $\exp:\cU\to \cV$ is defined. As $\exp$ is the identity on $D$, it yields an orbifold diffeomorphism 
$\exp:\cU\to \cV$ for small open sets.
\end{proof}

The next result is the orbifold version of the tubular neighbourhood theorem for
symplectic submanifolds.

\begin{proposition}[Symplectic tubular neighborhood for orbifolds] \label{prop:tubular-sympl}
Let $(X,\o)$ be a symplectic orbifold and let $D \subset X$ be an homogeneous isotropy submanifold. 
Let $\cU \subset \nu_D$ be a neighborhood of $D$ in the orbifold normal bundle $\nu_D$.
Suppose that $(\cU, \o_{\nu_D})$ is a symplectic structure on $\cU$ such that the symplectic form $\o_{\nu_D}$ satisfies
that $\o_{\nu_D}$ and $\o$ coincide on $T_x X$ for all points $x \in D$, 
(here $D$ is identified to the zero section of $\nu_D$).
Then there are open sets $\cU',\cV'$ with $D\subset \cU'\subset \cU\subset \nu_D$ and $D\subset \cV'\subset X$
and an orbifold symplectomorphism $\varphi: (\cU', \o_{\nu_D}) \to (\cV', \o)$ so that $\varphi|_D = \Id_D$.
\end{proposition}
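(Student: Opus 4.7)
The strategy is to apply a Moser-type argument to two symplectic orbifold forms living on a common neighbourhood of $D$ inside $\n_D$, and to verify that every step can be done equivariantly so that the resulting diffeomorphism descends to an orbifold symplectomorphism fixing $D$ pointwise.

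First, shrinking if necessary, apply Proposition~\ref{prop:tubular} to obtain an orbifold diffeomorphism $F:\cU_0\to \cV_0$ from a neighbourhood $\cU_0\subset \cU$ of $D$ in $\n_D$ onto a neighbourhood $\cV_0\subset X$ of $D$, with $F|_D=\Id_D$. Write $\o_0:=F^*\o$, so that both $\tilde{\o}$ and $\o_0$ are symplectic orbifold $2$-forms on $\cU_0$ that agree pointwise on $T_xX$ for every $x\in D$, the latter following from the hypothesis on $\tilde{\o}$ together with $dF|_x=\Id$ on $T_xX$ for $x\in D$. The closed orbifold $2$-form $\eta:=\o_0-\tilde{\o}$ therefore vanishes along $D$.

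Next, construct a primitive $\mu\in \Omega^1_{orb}(\cU_0)$ of $\eta$ with $\mu|_D=0$ via a relative Poincar\'e lemma on the orbifold vector bundle $\n_D$. Consider the radial homotopy $h_t(x,v)=(x,tv)$, which is a well-defined orbifold map because the fibrewise $\G_x$-action is linear. The standard homotopy operator yields
\begin{equation*}
\mu_{(x,v)}(u)=\int_0^1 \eta_{(x,tv)}\bigl(v^{\mathrm{vert}},(dh_t)_{(x,v)}(u)\bigr)\,dt,
\end{equation*}
where $v^{\mathrm{vert}}$ denotes $v$ viewed as a vertical tangent vector at $(x,tv)$. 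The integrand is smooth on $[0,1]$ because $\eta|_D=0$, and $\mu|_D=0$ because of the vertical factor $v$. Since $h_t$, the radial vector field, and $\eta$ are all $\G_\a$-equivariant in each local trivialisation of $\n_D$ provided by Proposition~\ref{prop:normal}, the form $\mu$ assembles into an orbifold $1$-form on $\cU_0$; a further averaging over the local groups can be performed if needed, preserving both $d\mu=\eta$ and $\mu|_D=0$.

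Now apply Moser's trick as in Proposition~\ref{prop:Darboux}. Set $\o_t=\tilde{\o}+t\eta$ for $t\in[0,1]$; since $\eta|_D=0$ and $\tilde{\o}$ is non-degenerate on $D$, each $\o_t$ is non-degenerate on some open neighbourhood $\cU'$ of $D$ inside $\cU_0$, uniformly in $t$. Define the time-dependent orbifold vector field $X_t$ by $\iota_{X_t}\o_t=-\mu$; it vanishes along $D$ because $\mu$ does, so after shrinking $\cU'$ further its flow $\varphi_t$ is defined on $\cU'$ for all $t\in[0,1]$, is equivariant for the local groups, and fixes $D$ pointwise. The calculation of Proposition~\ref{prop:Darboux}, using Cartan's formula and $d\eta=0$, gives $\frac{d}{dt}\varphi_t^*\o_t=0$, hence $\varphi_1^*\o_0=\tilde{\o}$. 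The composition $\varphi:=F\circ\varphi_1$ defined on $\cU'$, mapping onto $\cV':=\varphi(\cU')\subset X$, is the desired orbifold symplectomorphism with $\varphi|_D=\Id_D$.

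The main obstacle is the construction of the equivariant primitive $\mu$ with $\mu|_D=0$ in the orbifold setting: one must verify that the homotopy formula produces a globally well-defined orbifold $1$-form (not merely local expressions on charts) and that the Moser vector field $X_t$ patches into a genuine orbifold vector field across different trivialisations of $\n_D$. Once this is in place, the remainder is the routine orbifold-equivariant Moser argument already carried out in Proposition~\ref{prop:Darboux}.
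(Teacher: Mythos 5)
Your proof is correct and follows the same overall strategy as the paper: pull $\o$ back by the tubular-neighbourhood diffeomorphism of Proposition \ref{prop:tubular} and then run an equivariant Moser isotopy, exactly as in Proposition \ref{prop:Darboux}. The one genuine difference is how the primitive $\mu$ of $\o_1-\o_0$ is produced. The paper argues cohomologically: since $i^*\colon H^2_{orb}(\nu_D)\to H^2(D)$ is an isomorphism and $i^*(\o_1-\o_0)=0$, a primitive exists, and it is then corrected to $\mu-\pi^*i^*\mu$ so that its pullback to the zero section vanishes. You instead build $\mu$ explicitly with the fibrewise radial homotopy operator $h_t(x,v)=(x,tv)$. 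Your route buys something real: the homotopy-operator primitive vanishes along $D$ as a covector on all of $T_{(x,0)}\nu_D$, which is exactly the condition needed to conclude $X_t|_D=0$ and hence $\varphi_t|_D=\Id_D$; the paper's correction only guarantees $i^*\mu=0$, i.e.\ vanishing on vectors tangent to $D$, so its deduction that $X_t$ vanishes along $D$ really requires the stronger normal-form of the primitive that your construction provides. Two small points you should make explicit: $\cU_0$ must be shrunk to a fibrewise star-shaped (e.g.\ disk-bundle) neighbourhood so that the radial homotopy preserves it, and the smoothness of the integrand is automatic (the vanishing $\eta|_D=0$ is what gives $h_0^*\eta=0$, hence $d\mu=\eta$, and the uniform non-degeneracy of $\o_t$ near $D$, rather than smoothness of the integral). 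The equivariance of $h_t$ under the local groups and the linear transition functions of $\nu_D$, which you do note, is what makes $\mu$ a genuine orbifold $1$-form.
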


\begin{proof}
The proof is similar to the equivariant Darboux theorem (Proposition \ref{prop:Darboux}).
Take first any orbifold diffeomorphism $h:\cU \subset \nu_D \to \cV \subset X$ such that $h|_D=\Id_D$
by Proposition \ref{prop:tubular} (maybe reducing $\cU$ if necessary).
Let us call $i: D \to \nu_D$ the inclusion of $D$ as the zero section, and let
$\o_0= \tilde{\o}$, $\o_1= h^*(\o)$, so that $\o_0$ and $\o_1$ are two symplectic 
forms on $\cU \subset \nu_D$ such that 
$i^*(\o_1 - \o_0)=0$. 

By (\ref{eqn:Horb}), the orbifold De Rham cohomology $H_{orb}^2(\nu_D) \cong H^2(\nu_D)$.
Hence the inclusion $i: D \to \nu_D$ 
induces an isomorphism $i^*: H_{orb}^2(\nu_D) \to H^2(D)$. 
So there exists an orbifold one form $\mu \in \O_{orb}^1(V)$ such that $d \mu = \o_1 - \o_0$. 
We can suppose that the restriction $i^* \mu$ of $\mu$ to the zero section vanishes.
Indeed, if not then we would consider the form $\tilde{\mu}=\mu - \pi^* i^* \mu$
which also satisfies $d \tilde{\mu}= d \mu - \pi^* i^*(\o_1 - \o_0)= d \mu= \o_1 - \o_0$,
and $i^* \tilde{\mu}=i^* \mu - i^* \pi^* i^* \mu = i^* \mu - i^* \mu =0$.

Consider the form $\o_t= t \o_1 + (1-t) \o_0= \o_0 + t \, d \mu$, for $0 \le t \le 1$. 
Since $i^*\o_t= i^*\o_0= i^* \o_1$ is symplectic on the zero section $D$, we can suppose, reducing $\cU$ if necessary,
that $\o_t$ is symplectic on some neighborhood, which we call $\cU$ again, of the zero section $D$ of $\nu_D$.
The equation $\iota_{X_t} \o_t= - \mu$ admits a unique solution $X_t$ which is a vector field on $V$.
Since $i^* \mu=0$, it follows that $X_t|_x=0$ for every $x \in D \subset \nu_D$.
Now consider the flow $\varphi_t$ of the family of vector fields $X_t$.
There is some $\cU'\subset \cU$ such that $\varphi_t: \cU' \to \cU$ for all $t \in [0,1]$.
Moreover $\varphi_0= \Id_{\cU}$, and $\varphi|_D= \Id_D$. 
We compute
 \begin{align*}
 \frac{d}{dt}\Big|_{t=s} \varphi_t^* \o_t &= 
  \varphi_s^*\left(\mathcal{L}_{X_s} \o_s\right) +  \varphi_s^* (d \mu)  \\
 &= \varphi_s^* \left(d(\iota_{X_s} \o_s) - \iota_{X_s} d \o_s \right) + \varphi_s^* d \mu 
 = -\varphi_s^* (d\mu) + \varphi_s^* (d \mu) =0.
 \end{align*}
This implies that $\o_0= \varphi_0^* \o_0 = \varphi_1^* \o_1$.
So $\varphi_1:(\cU',\o_{\nu_D}) \to (\cU,h^*(\o))$ is a symplectomorphism.
It remains to see that $\varphi$ is $\G_\a$-equivariant by all the local isotropy groups $\G_\a$.
Fix a chart of $\nu_D$ and suppose that the group $\G$ acts on this chart.
As $\omega_t$ and $\mu$ are $\G$-equivariant, we have that $X_t$ are $\G$-equivariant. This implies that
the diffeomorphisms $\varphi_t$ are $\G$-equivariant.

Given $\varphi=\varphi_1$ as above, take the composition 
$\psi=h\circ \varphi: (\cU',\o_{\nu_D}) \to (\cV,\o)$, which is our desired orbifold symplectomorphism
of $\cU'$ onto $\cV'=\psi(\cU')\subset \cV$.
\end{proof}

Now let $(X,\o)$ be a symplectic orbifold with an homogeneous isotropy submanifold $D\subset X$.
Let $2d$ be the dimension of $D$ and $2k=2n-2d$ its codimension. 
Then we take $(\o,g,J)$ any orbifold almost K\"ahler structure for $(X,\o)$.  
For $x_0 \in D$, we take an orbifold Darboux chart $(U,V,\phi, \G)$ adapted to $D$, with $\G<\UU(k)$.
So the lifting of $D$ to $U$ is given by $\{ z_{d+1}=0, \ldots ,z_n=0 \}$.
By compatibility of $g$ and $\omega$,
we have $(T_{x_0}D)^{\perp \o}= (T_{x_0}D)^{\perp g}$, and it has the structure of a $J$-complex
subspace of $T_{x_0} U = \CC^n$, and it is given by $(T_{x_0}D)^{\perp \o}=\{ z_1=0, \ldots, z_d=0 \}$.
The action of $\G$ on $U$ is given by $\g (x,y)=(x,\g y)$ for $x=(z_1,\dots,z_{d}) \in \CC^{d}$ 
and $y=(z_{d+1},\dots,z_n) \in \CC^{k}$.

Under the diffeomorphism $F:\cU\to \cV$ provided by Proposition \ref{prop:tubular}, where
$\cU$ is a neighbourhood of the zero section $D \subset \n_D$ and $\cV$ is a neighbourhood
of $D\subset X$, we can consider the pull-back of $\o$ to $\cU$, which we will call
$\o$ again. So $\o\in \O^2_{orb}(\cU)$ is a symplectic orbifold form.

The following proposition modifies a symplectic form on a normal 
bundle of a symplectic suborbifold so as to make it linear in the fibers.

\begin{proposition} \label{prop:linear-symplectic}
Let $(X,\o)$ be a symplectic orbifold and $D$ a homogeneous isotropy submanifold. Denote $(D,\o_D)$
the inherited symplectic structure. 
The total space of the bundle $\pi: \nu_D \to D$ admits 
an orbifold closed $2$-form $\o_{\nu_D}$ such that:
 \begin{itemize}
 \item $\o_{\nu_D}$ and $\o$ coincide along the zero section $D\subset \nu_D$,
 in particular $\o_{\nu_D}$ is symplectic on an open set $\cU$ with $D\subset  \cU\subset \nu_D$.
 
 \item Restricted to any fiber $F_x=\nu_{D,x}=(T_xD)^{\perp \o}/\G_x$, the form
 $\o_{\nu_D}|_{F_x}$ is constant on the vector space $(T_xD)^{\perp\o}$.
 \end{itemize}
\end{proposition}

\begin{proof}
We consider a local trivialization of $\nu_D$, given by a chart $\phi:U_\a \x \CC^k  \to \nu_D|_{U_\a}$
with coordinates $(x,y)$, $x \in U_\a, y \in \CC^k$, and
with group $\G_\a < \UU(k)$. Consider the form $(\o_x)|_{(T_xD)^{\perp\o}}=(\o_x)_{F_x}$,
which is a $\G_\a$-equivariant symplectic $2$-form on the vector space $(T_xD)^{\perp\o}$.
Write $(\o_x)_{F_x}=\sum b_{ij}(x) dy_i\wedge dy_j$ and let $\b_\a= d(\sum b_{ij}(x) y_i dy_j)=d \eta_\a$.
Then $\b_\a$ is closed and satisfies $\b_\a|_{F_x}=(\o_x)_{F_x}$ for every $x\in D$.
Averaging over $\G_\a$, we have a $\G_\a$-invariant form $\tilde \b_\a$ satisfying the same conditions.
Now consider $\o'_\a=\pi^*(\o_D) +\tilde \b_\a$. 
This is $\G_\a$-invariant, $(\o'_\a)_x=\o_x$ for all $x\in U_\a$ and it is constant on fibers.
Clearly $\o'_\a=\pi^*(\o_D) +d \tilde \eta_\a$, for  
$\tilde \eta_\a= \in \O^1(U_\a \times \CC^k)$ the average of $\eta_\a=\sum b_{ij}(x) y_i dy_j$. 
Note that the $2$-forms $d \eta_\a$ restrict to $0$ on $U_\a \times \{0\}$
and restrict to $(\o_x)_{F_x}$ on every fiber $F_x$ over a point $x \in U_\a$.

Take any smooth orbifold partition of unity $\rho_\a$ subordinated to the cover $U_\a$ of $D$. 
Consider the form 
 \begin{equation}\label{eqn:refgive}
 \o_{\nu_D}= \pi^*(\o_D) +  \sum_{\a} d \left( (\pi^*\rho_\a) \tilde \eta_\a \right) = \pi^*(\o_D) + \tilde \b .
 \end{equation}
Note that $\o_{\nu_D}$ is invariant by the local groups since all objects involved in its definition are. 
Restricting to a fiber $F_x$, we have $\o_{\nu_D}|_{F_x}=\sum d (\rho_\a(x) \tilde \eta_\a)=
\sum \rho_\a(x) (\o_x)_{F_x}=(\o_x)_{F_x}$. For $(x,0) \in \nu_D$, we have from the expression
$\o_{\nu_D}= \pi^*(\o_D) + \sum d (\pi^*\rho_\a) \wedge \tilde \eta_\a + \sum (\pi^*\rho_\a) d \tilde \eta_\a$
and the fact that $\tilde \eta_\a$ vanishes at $(x,0)$, that $\o_{\nu_D}|_{(x,0)}=\o|_{(x,0)}$.
In particular, $\o_{\nu_D}$ is non-degenerate at every point $(x,0)$ in the zero section, which implies that
$\o_{\nu_D}$ is also non-degenerate in some open neighborhood $\cU$ of the zero section in $\nu_D$.
Since $\o_{\nu_D}$ is closed, it is symplectic on $\cU$. 
\end{proof}
The following is the analogous result of Proposition \ref{prop:linear-symplectic} in the almost K\"ahler setting.
\begin{proposition}\label{prop:linear-Kahler}
Let $(X,\o, J,g)$ be an almost K\"ahler orbifold and $D$ a homogeneous isotropy submanifold
with its inherited structure $(D,\o_D,J_D,g_D)$.
An open neighborhood $V \subset \nu_D$ of the zero section $D=D \times \{0\}\subset \nu_D$ 
admits an orbifold almost K\"ahler 
structure $(\o_{\nu_D}, J_{\nu_D}, g_{\nu_D})$  such that:
\begin{itemize}

\item For a point $(x,0)$ in the zero-section we have that, under
the natural splitting $T_{(x,0)}(\nu_D)= T_x D \oplus (T_x D)^{\perp}$, 
the restriction of $(\o_{\nu_D}, J_{\nu_D})$ to $T_x D$ and $(T_x D)^{\perp}$ coincides with $(\o, J)$.

\item The tensors $\o_{\nu_D}$, $J_{\nu_D}$ and $g_{\nu_D}$ are constant along the fibers $F_x=\nu_{D,x}$, for $x\in D$. 
\end{itemize}
\end{proposition}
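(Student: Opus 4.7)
The plan is to produce $\tilde\o$ first via Proposition \ref{prop:linear-symplectic}, then build a compatible orbifold Riemannian metric $\tilde g$ by a parallel partition-of-unity construction on $\nu_D$, and finally extract $\tilde J$ from $(\tilde\o,\tilde g)$ using the purely algebraic recipe of Proposition \ref{almost-Kahler-orbifold}. The properties on the zero section and along fibers will then follow automatically from the corresponding properties of $\tilde\o$ and $\tilde g$.

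First I take $\tilde\o$ as given by Proposition \ref{prop:linear-symplectic}: closed, equal to $\o$ at points of $D$, symplectic on a neighbourhood $\cU$ of the zero section, and with constant restriction to every fiber $F_x$. Next, I cover $D$ by the Darboux-adapted charts $(U_\a',V_\a',\phi_\a,\G_\a)$ trivializing $\nu_D$ as $U_\a'\x\CC^k$ with $\G_\a<\UU(k)$ provided by the proof of Proposition \ref{prop:normal}, choosing the unitary frames so that at each $(x,0)$ the identification of the fiber with $\CC^k$ is an isometry for $g|_{(T_xD)^{\perp}}$. On each such chart I put $g_\a=\pi^*(g|_D)+h_{std}$, where $h_{std}$ is the standard Hermitian inner product on $\CC^k$; this is $\G_\a$-invariant since $\G_\a<\UU(k)$, constant on fibers by construction, and agrees with $g$ along the zero section. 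Using an orbifold partition of unity $\{\rho_\a\}$ subordinate to $\{V_\a'\}$ from Proposition \ref{partition unity}, I set
\[
\tilde g=\sum_\a (\pi^*\rho_\a)\, g_\a,
\]
which is a positive-definite orbifold Riemannian metric on $\nu_D$, constant on each $F_x$, and satisfying $\tilde g_{(x,0)}=g_{(x,0)}$.

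Then I run the algebraic construction of Proposition \ref{almost-Kahler-orbifold} on the pair $(\tilde\o,\tilde g)$: form $A\in\End(T\nu_D)$ by $\tilde g(u,Av)=\tilde\o(u,v)$, show $B=-A^2$ is symmetric positive, take its square root by the power series, and set $\tilde J=-(\sqrt{B})^{-1}A$. Because the recipe is purely pointwise algebra, $\G_\a$-equivariance of $\tilde J$ is automatic from that of $\tilde\o$ and $\tilde g$, and the value of $\tilde J$ at any point depends only on the values of $\tilde\o$ and $\tilde g$ at that point. Consequently, at $(x,0)\in D$ one has $\tilde J_{(x,0)}=J_{(x,0)}$, and the triple $(\tilde\o,\tilde g,\tilde J)$ therefore matches $(\o,g,J)$ on each summand of the splitting $T_{(x,0)}\nu_D=T_xD\oplus(T_xD)^{\perp}$; constancy of $\tilde J$ along $F_x$ follows from the constancy of $\tilde\o$ and $\tilde g$ along $F_x$. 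Shrinking to the open subset on which $\tilde\o$ is symplectic and $\tilde J^2=-\Id$ (an open condition holding on all of $D$) gives the desired neighbourhood $V$.

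The main point to be careful with is matching the sense of \emph{constant along fibers} used in Proposition \ref{prop:linear-symplectic}: there the form $\tilde\o$ is only known to be constant as a tensor on $F_x$ after restriction, not as a $2$-form on the whole $T\nu_D$, because the horizontal-vertical block of $\tilde\o$ may depend on $y$. The construction of $\tilde g$ above is arranged so the analogous restriction statement holds verbatim, and the algebraic nature of $\tilde J=-(\sqrt{-A^2})^{-1}A$ propagates the property from the inputs to the output; this is the step where one must verify carefully that the pointwise recipe is compatible with the fiberwise restriction.
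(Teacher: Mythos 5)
Your overall strategy (take $\tilde\o$ from Proposition \ref{prop:linear-symplectic}, build an auxiliary orbifold metric, then run the polar-decomposition recipe of Proposition \ref{almost-Kahler-orbifold}) is the same as the paper's, but your choice of auxiliary metric leaves a genuine gap at exactly the point you flag at the end: the constancy of $\tilde J$ along the fibers. The endomorphism $A$ at $(x,y)$ is defined by $\tilde g(u,Av)=\tilde\o(u,v)$ on all of $T_{(x,y)}\nu_D$, so $\tilde J=-(\sqrt{-A^2})^{-1}A$ depends on the \emph{full} tensors $\tilde\o_{(x,y)}$ and $\tilde g_{(x,y)}$, not just on their restrictions to the vertical subspace $T_{(x,y)}F_x$. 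Those full tensors are not constant in $y$: $\tilde\o$ has mixed horizontal--vertical terms coming from $d(\pi^*\rho_\a)\wedge\eta_\a$, and your $\tilde g=\sum_\a(\pi^*\rho_\a)g_\a$ is built from the coordinate product splittings of the various charts, which disagree with one another away from the zero section (the transition $(x,m)\mapsto(\psi_{\a\b}(x),A_{\a\b}(x)m)$ tilts the coordinate horizontal by $(\partial A_{\a\b}/\partial x)\,y$). Consequently the $\tilde g$-orthogonal complement of $T_{(x,y)}F_x$ has no reason to coincide with its $\tilde\o$-symplectic orthogonal complement, so $A$ need not preserve the vertical subspace; then $\tilde J|_{T F_x}$ is not even an endomorphism of $TF_x$, and the second bullet of the statement is not established. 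Asserting that constancy of the \emph{restrictions} of $\tilde\o,\tilde g$ propagates to $\tilde J$ is precisely the step that fails.

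The paper avoids this by a different construction of the auxiliary metric: at each $(x,u)$ it takes the splitting $T_{(x,u)}\nu_D=T_{(x,u)}F_x\oplus(T_{(x,u)}F_x)^{\perp\tilde\o}$, declares these two subspaces to be $g'$-orthogonal, puts the constant fiber metric $g_x|_{(T_xD)^{\perp}}$ on the first factor and $\pi^*(g|_{T_xD})$ on the second. Because the $\tilde\o$-orthogonal and $g'$-orthogonal complements of the vertical then coincide, $A$ is block-diagonal for this splitting, its vertical block is determined by the (constant) restrictions $\tilde\o|_{F_x}$ and $g'|_{F_x}$, and hence equals $J_x|_{(T_xD)^{\perp}}$; constancy of $\tilde J$ along $F_x$ follows. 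To repair your proof you would need to replace your partition-of-unity metric by one adapted to the $\tilde\o$-orthogonal distribution in this way (and then average over the local groups). A secondary, smaller issue: choosing "unitary frames'' making the fiber identification a $g$-isometry at every $(x,0)$ while keeping a fixed linear $\G_\a$-action requires the retraction $r:\Sp(2k,\RR)\to\UU(k)$ and Lemma \ref{lem:retrac}, which in the paper's logical order come \emph{after} this proposition; it is cleaner to use the constant-in-$y$ extension of $g_x|_{(T_xD)^{\perp}}$ directly, which is $\G_\a$-invariant because $g$ is an orbifold metric.
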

\begin{proof}
We take the symplectic structure $\o_{\nu_D}$ provided by Proposition \ref{prop:linear-symplectic}. 
Let us define first an auxiliar metric $g'$ on $V \subset \nu_D$.
We define $g'$ so that is coincides with $g$ on $T_xD$ and on $(T_xD)^\perp$ for $x\in D$.
On the fiber $F_x=\nu_{D,x}=((T_xD)^\perp)/\G_x$, the tensors $g_x|_{(T_xD)^\perp}$ and 
$J_x|_{(T_xD)^\perp}$ are $\G_x$-equivariant, so we can define constant tensors on $F_x$, which
vary smoothly for $x\in D$. Define $g'_y$ equal to $g_x|_{(T_xD)^\perp}$ at any point $y \in F_x$.

Now we extend $g'$ to a Riemannian metric on $V\subset \n_D$. This is done as follows.
For $(x,u)\in V\subset \n_D$, with $u\neq 0$, we consider the splitting $T_{(x,u)} \nu_D=T_{u} F_x \oplus
(T_{u} F_x)^{\perp \o_{\nu_D}}$. 
We define $g'$ by making these subspaces orthogonal
so that $g'$ restricted to $(T_{u} F_x)^{\perp \o_{\nu_D}}$
is $\pi^*(g|_{T_xD})$ under the isomorphism $\pi_*:(T_{u} F_x)^{\perp \o_{\nu_D}} \to T_xD$. 
The metric $g'$ may not be equivariant, so we make it equivariant by averaging
and then we use the method of the proof of Proposition \ref{almost Kahler-orbifold}
to modify $g'$ into an orbifold Riemannian metric $g_{\nu_D}$ such that $g_{\nu_D}(u,v)=\o_{\nu_D}(u,J_{\nu_D}v)$ defines 
an orbifold almost K\"ahler structure $J_{\nu_D}$. Note that the tensor $A$ defined by $g'(u,Av)=\o_{\nu_D}(u,v)$
satisfies that $A= J$ at the points of $D \subset \nu_D$, so $J_{\nu_D}=J$ on $D$ as desired. 

Let us see now that $J_{\nu_D}=J$ in $TF$. Take any $y \in \nu_D$ and take tangent vectors 
$u \in (T_y F)^{\perp \o{\nu_D}}=(T_yF)^{\perp g'}$, $v \in T_yF$,
so $g'|_y(u,Av)=\o_{\nu_D}|_y(u,v)=0$, and it follows that $Av \in  T_yF$. 
Now, for $u, v \in T_yF$ we have $g'|_y(u,Av)=g_{\pi(y)}(u,Av)=g_{\pi(y)}(u,Jv)=g'_y(u,Jv)$ since we have defined $g'$
so that it coincides with $g$ in $TF$ and we already saw that $A=J$ on points of $D$. 
It follows that $A|_y v=J_{\pi(y)}v$ for all $y \in \nu_D$ and $v \in T_y \nu_D$, which implies that also
$J_{\nu_D}|_y v=J_{\pi(y)}v$, so $J_{\nu_D}$ is constant along $F_x$ as desired.
Recall that $g_{\nu_D}$ is determined by $J_{\nu_D}$ and $\o_{\nu_D}$ so it must be also constant on the fibers.
This concludes the proof.
\end{proof}

To proceed further, we will use the natural retraction of \cite[Prop.\ 2.2.4]{MS},
 \begin{equation} \label{eqn:r(A)}
  r:\Sp(2k,\RR)\to \UU(k), \qquad r(A)=A(A^tA)^{-1/2}
 \end{equation}
We note that there is a group $\G<\UU(k)$ and an isomorphism $\rho:\G \to \G'<\UU(k)$, such that
$A$ is $\G$-equivariant, in the sense that if $A\circ \g=\rho(\g)\circ A$, 
then $r(A)$ is also $\G$-equivariant.
\begin{lemma} \label{lem:retrac}
Let $A,C \in \UU(k)$ and $B \in \Sp(2k,\RR)$ such that $A=B^{-1}  C B$.
Then $A=r(B)^{-1} C \, r(B)$.
\end{lemma}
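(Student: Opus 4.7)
The plan is to exploit the polar decomposition behind the retraction $r$. Write $B = U P$ where $U = r(B) = B(B^tB)^{-1/2}$ and $P = (B^tB)^{1/2}$ is the symmetric positive-definite real square root. By the property of $r$ cited in \eqref{eqn:r(A)}, we have $U \in \UU(k)$, and therefore $U^{-1}CU \in \UU(k)$ as well. Substituting $B = UP$ in the hypothesis $A = B^{-1} C B$ gives
$$
A = P^{-1} U^{-1} C U P,
$$
so if we set $D := U^{-1} C U$, then $D \in \UU(k)$ and $D = P A P^{-1}$. Thus the statement reduces to showing that $D = A$, i.e.\ that $P$ commutes with $A$.

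To prove $PA = AP$, I would use that both $A$ and $D$ lie in $\UU(k) \subset \OO(2k)$, so they are real orthogonal: $A^t = A^{-1}$ and $D^t = D^{-1}$. Computing directly from $D = P A P^{-1}$ and using $P^t = P$,
$$
D^t = P^{-1} A^t P = P^{-1} A^{-1} P, \qquad D^{-1} = P A^{-1} P^{-1}.
$$
The equality $D^t = D^{-1}$ then yields $P^{-1} A^{-1} P = P A^{-1} P^{-1}$, equivalently $A^{-1} P^2 = P^2 A^{-1}$. Hence $P^2$ commutes with $A$.

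Since $P$ is a symmetric positive-definite matrix, its square root $P = (P^2)^{1/2}$ can be expressed as a polynomial in $P^2$ (take any polynomial interpolating $\sqrt{\cdot}$ on the spectrum of $P^2$). Therefore anything that commutes with $P^2$ also commutes with $P$, so $AP = PA$. This gives $D = PAP^{-1} = A$, and consequently
$$
A = U^{-1} C U = r(B)^{-1} C\, r(B),
$$
as desired.

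The argument is essentially a short linear-algebra calculation; the only point that could look subtle is the passage from commutation with $P^2$ to commutation with $P$, but this is immediate from the polynomial-in-$P^2$ description of the positive square root. Note also that the symplectic hypothesis $B \in \Sp(2k,\RR)$ is used only to guarantee via \eqref{eqn:r(A)} that $r(B)$ actually lands in $\UU(k)$; once this is known, the remainder of the proof uses only orthogonality of $A$ and $D$ and positivity of $P$.
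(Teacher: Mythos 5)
Your proof is correct, but it reaches the key commutation relation by a genuinely different route than the paper. Both arguments ultimately reduce the lemma to showing that $A$ commutes with $B^tB$ (equivalently with $(B^tB)^{1/2}$), after which the conclusion is immediate from $r(B)=B(B^tB)^{-1/2}$. The paper gets there by using the symplectic condition in the form $B^t=-J_0B^{-1}J_0$ together with the fact that $A$ and $C$ commute with the standard complex structure $J_0$, pushing $C$ past $B^{-1}$ via $B^{-1}C=AB^{-1}$; it never uses that $A$ and $C$ are orthogonal. You instead pass to the polar decomposition $B=UP$, observe that $D=U^{-1}CU=PAP^{-1}$ is orthogonal because $A$, $C$ and $U$ are, and extract $A^{-1}P^2=P^2A^{-1}$ from $D^t=D^{-1}$ and $P^t=P$; you never use the $J_0$-commutation. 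Since $\UU(k)=\OO(2k)\cap\GL(k,\CC)\cap\Sp(2k,\RR)$, each proof exploits a different pair of the available structures, and the two arguments generalize in different directions: the paper's works for any invertible complex-linear $A,C\in\GL(k,\CC)$ conjugated by a symplectic $B$, while yours works for any orthogonal $A,C\in\OO(2k)$ conjugated by an arbitrary invertible real $B$ (so, as you note, the symplectic hypothesis enters your argument only through the assertion that $r(B)$ lands in $\UU(k)$, which is not even needed for the identity itself). The step from commutation with $P^2$ to commutation with $P$ via a polynomial interpolating the square root on the spectrum is sound and is essentially the same device the paper invokes when it says $A$ commutes with $(B^tB)^{1/2}$.
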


\begin{proof}
The fact $B \in \Sp(2k,\RR)$ means that $B^t J_0 B=J_0$, where $J_0$ is the matrix of the standard complex structure.
So $B^t=-J_0 B^{-1} J_0$, $A^tA=C^tC=\Id$, $A J_0= J_0 A$ and $C J_0= J_0 C$. Then
 \begin{align*}
 (B^t B) A &=- J_0 B^{-1} J_0 B A=- J_0 B^{-1} J_0 C B=- J_0 B^{-1} C J_0 B  \\
  &=- J_0 A B^{-1} J_0 B=- A J_0 B^{-1} J_0 B = A (B^t B).
 \end{align*}
This means that $A$ commutes with $B^t B$. Therefore  $A$ commutes with $(B^t B)^{1/2}$ as well. 
Hence $r(B)^{-1} C r(B)=(B^t B)^{1/2} B^{-1} C B (B^t B)^{-1/2} = 
(B^t B)^{1/2} A (B^t B)^{-1/2}=A$, as required.
\end{proof}

\begin{proposition}
The normal orbifold bundle $\nu_D$ admits an atlas such that the transition functions $A_{\a \b}$
are $\UU(k)$-valued. 
In the terminology of Definition \ref{def:orbifold-bundle}, the structure group of $\nu_D$ reduces to $\UU(k)$.
\end{proposition}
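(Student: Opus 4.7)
The plan is to reduce the structure group by using a fiberwise almost complex structure coming from Proposition \ref{prop:linear-Kahler}, together with a canonical $\Sp(2k,\RR)$-gauge transformation built from the polar decomposition. In a local trivialization $\phi_\alpha: U_\alpha \times \CC^k \to \nu_D|_{U_\alpha}$, the fiberwise restriction of the almost complex structure $\tilde J$, which is constant along fibers, defines a smooth family $J_\alpha: U_\alpha \to \End(\CC^k)$ of $\omega_0$-compatible complex structures. Since $\tilde J$ is an orbifold tensor and $\G_\alpha$ acts on the fiber through $\UU(k)$, each $J_\alpha(x)$ commutes with every $\gamma \in \G_\alpha$.

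Next I would construct a canonical $\G_\alpha$-equivariant gauge transformation $h_\alpha: U_\alpha \to \Sp(2k,\RR)$ with $h_\alpha(x)^{-1} J_\alpha(x) h_\alpha(x) = J_0$, where $J_0$ is the standard complex structure on $\CC^k$. The polar decomposition identifies $\Sp(2k,\RR)/\UU(k)$ with the smooth cone of positive-definite symmetric symplectic matrices, and under this identification every $\omega_0$-compatible complex structure $J$ corresponds to a unique such matrix $P(J)$ satisfying $P(J)^{-1} J P(J) = J_0$. Setting $h_\alpha(x) = P(J_\alpha(x))$ gives a smooth family. For the equivariance, observe that for any $\gamma \in \G_\alpha < \UU(k)$ the matrix $\gamma h_\alpha(x) \gamma^{-1}$ is still positive-definite symmetric and symplectic (conjugation by a unitary, which is orthogonal) and still conjugates $J_\alpha(x)$ to $J_0$ (because $\gamma$ commutes with both $J_0$ and $J_\alpha(x)$); by uniqueness of $P$, this forces $\gamma h_\alpha(x) \gamma^{-1} = h_\alpha(x)$. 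This is precisely the mechanism exhibited in Lemma \ref{lem:retrac}.

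Finally, I would define the refined trivializations $\tilde\phi_\alpha(x,y) = \phi_\alpha(x, h_\alpha(x) y)$. The commutativity of $h_\alpha(x)$ with $\G_\alpha$ ensures that the local group acts in $\tilde\phi_\alpha$ by the same inclusion $\G_\alpha < \UU(k)$, so the axioms of Definition \ref{def:orbifold-bundle} are preserved. A direct computation gives the new transition functions
$$
\tilde A_{\alpha\beta}(x) = h_\beta(\psi_{\alpha\beta}(x))^{-1}\, A_{\alpha\beta}(x)\, h_\alpha(x).
$$
These lie in $\Sp(2k,\RR)$, and by construction they map the almost complex structure $\tilde J$, which equals $J_0$ in both trivializations, to itself. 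Hence $\tilde A_{\alpha\beta}(x) \in \Sp(2k,\RR) \cap \GL(k,\CC) = \UU(k)$, which is the desired reduction.

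The main obstacle is to build $h_\alpha$ both smoothly in $x$ and compatibly with the local isotropy action: smoothness follows from the smoothness of the polar decomposition on the positive cone, while equivariance reduces to the uniqueness statement underlying Lemma \ref{lem:retrac}.
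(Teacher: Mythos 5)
Your argument is correct and is essentially the paper's own proof: both reduce the structure group by normalizing the fiberwise compatible almost complex structure through the polar decomposition of $\Sp(2k,\RR)$ relative to $\UU(k)$, with equivariance under the local isotropy groups coming from the uniqueness of the positive symmetric symplectic factor (the mechanism of Lemma \ref{lem:retrac}). The only difference is presentational: the paper first passes to a unitary frame $C_\a(x)$ for the fiberwise hermitian metric and then corrects by $r(C_\a(x))^{-1}$, and the composite gauge transformation $(C_\a(x)^t C_\a(x))^{1/2}$ is exactly your canonical $h_\a(x)$.
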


\begin{proof}
By Propositions \ref{prop:normal} and \ref{prop:linear-Kahler}, the normal orbifold bundle $\nu_D$ 
admits an almost K\"ahler structure $(\o, J, g)$ which is constant along the fibers, 
and it also admits the structure of a $\Sp(2k,\RR)$-orbifold bundle. Call $h$ the hermitian metric
associated with $(\o,J,g)$.
Take an atlas $\{(U_\a \x \CC^k, \G_\a\ , \o_0)\}_{{\a \in I}}$ of $\nu_D$ so that $\G_\a < \UU(k)$, $\o_0$
the standard symplectic form in $\CC^k$, and the transition functions 
are $A_{\a\b}: U_\a \cap U_\b \to \Sp(2k,\RR)$.
 
Fix a chart $U_\a \x \CC^k$ and call $(x,y)$ the corresponding coordinates. 
The hermitian metric $h$ induces a linear hermitian metric $h_x$ on 
each fiber $\{x\} \x \CC^k$ varying smoothly with $x \in U_\a$. Using a $h_x$-unitary frame, 
this is determined by a matrix $C_\a(x) \in \Sp(2k,\RR)$. 
The orbifold almost K\"ahler structure of the fibers is given by tensors $(\o_0,J_x, g_x)$, which
are $\G_\a$-equivariant. If we introduce new coordinates 
$(x,\tilde{y})=(x, C_\a(x) y)$ then the orbifold almost K\"ahler structure of the fibers is given
by the standard tensors $(\o_0,J_0, g_0)$ defining the complex structure and metric in $\CC^k$, but the
action is given by the varying group $\G^x_\a = C_\a(x) \G_{\a} C_\a(x)^{-1}$. Clearly 
$\G^x_\a < \UU(k)$ because it preserves the hermitian structure $(\o_0, g_0, J_0)$.
The group $\G^x_\a$ acts on the fiber $\{x\}\x \CC^k$ and 
vary with the point $x \in U_\a$, so the action is not linear on the chart
$U_\a \x \CC^k$.
On the other hand, in the coordinates $(x,\tilde{y})$ the transition functions of the bundle 
are $\UU(k)$-valued as we want.

Now define new coordinates $(x,y')=(x,r(C_\a(x))^{-1} \tilde{y})$ where $r$ is the retraction (\ref{eqn:r(A)}).
The hermitian metric in the new coordinates is the standard metric of $\CC^k$ because it 
was so in the coordinates $(x,\tilde{y})$ and $r(C_\a(x))^{-1} \in \UU(k)$.
So the orbifold almost K\"ahler structure of the fibers in the coordinates $(x,y')$ is given by 
$(\o_0,J_0, g_0)$. However, the isotropy group is the group $\G_\a < \UU(k)$ that we began with.
Indeed, $\G_\a= C_\a(x)^{-1}  \G^x_{\a}  C_\a(x)$ implies, by Lemma \ref{lem:retrac},
that $\G_\a= r(C_\a(x))^{-1}  \G^x_{\a} \, r(C_\a(x))$.
Carrying out this procedure for each coodinate patch, the corresponding transition 
functions are in $\UU(k)$, whereas the isotropy is given by the groups $\G_\a <\UU(k)$.
\end{proof}

\begin{corollary} \label{cor:hermitian-fiber-standard}
We can find adequate trivializations of $\nu_D$ so that the almost K\"ahler structure $(\o_{\nu_D},J_{\nu_D})$ of Proposition \ref{prop:linear-Kahler} defines the standard hermitian metric restricted to the fibers $\CC^k/\G$.
\end{corollary}

\begin{corollary} \label{cor:Gamma}
If $D \subset X$ is a connected homogeneous isotropy submanifold, 
then the normal bundle admits an atlas $\{U_\a \times \CC^k\}$
with the transition functions $A_{\a \b}: U_\a\cap U_\b \to \UU(k)$ and with the group $\G$ fixed.
Actually, the image of $A_{\a \b}$ lies in the normalizer of $\G < \UU(k)$, i.e.\ in the subgroup
of $\UU(k)$ given by $N_{\UU(k)} ( \G) = \{A \in \UU(k) | A \G A^{-1} = \G \}$.
\end{corollary}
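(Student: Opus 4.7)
The plan is to start from the atlas produced in the preceding proposition, where the transition functions already take values in $\UU(k)$, and then perform a fiberwise unitary change of coordinates on each chart so that the isotropy group in every chart becomes one fixed subgroup $\G<\UU(k)$. Once this is done, the normalizer statement will fall out of the compatibility condition in Definition \ref{def:orbifold-bundle}.

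First I would exploit the compatibility identity $A_{\a\b}(x)\g m=\rho_{\a\b}(\g)A_{\a\b}(x)m$, valid for every $\g\in\G_\a$, which forces $A_{\a\b}(x)\G_\a A_{\a\b}(x)^{-1}=\G_\b$ inside $\UU(k)$. Since $D$ is connected, any two charts can be joined by a finite chain of overlapping charts from the atlas, and composing transition functions evaluated at one point per overlap shows that all the isotropy groups $\G_\a$ belong to a single conjugacy class of subgroups of $\UU(k)$. I then fix a basepoint chart $U_{\a_0}$, declare $\G:=\G_{\a_0}$, and for every other index $\a$ pick an element $B_\a\in\UU(k)$ with $B_\a\G_\a B_\a^{-1}=\G$ (taking $B_{\a_0}=\Id$).

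Next I apply the fiberwise linear change of coordinates $y\mapsto B_\a y$ on each chart $U_\a\times\CC^k$. Since $B_\a\in\UU(k)$, it preserves the standard Darboux form $\o_0$, the standard complex structure and the standard hermitian metric, so the almost K\"ahler data obtained in the previous proposition remain in standard form. In the new coordinates the isotropy group acts as $B_\a\G_\a B_\a^{-1}=\G$ in every chart, and the transition functions become $\tilde A_{\a\b}(x)=B_\b A_{\a\b}(x) B_\a^{-1}\in\UU(k)$. With all isotropy groups now equal to $\G$, the compatibility condition reads $\tilde A_{\a\b}(x)\G\tilde A_{\a\b}(x)^{-1}=\G$, which is exactly the statement that $\tilde A_{\a\b}(x)\in N_{\UU(k)}(\G)$.

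The main subtlety lies in the coherent choice of the conjugators $B_\a$: different chains from the basepoint to $\a$ can produce different candidates, but any two such candidates differ by multiplication by an element of $N_{\UU(k)}(\G)$, and this discrepancy is harmlessly absorbed into the new transition functions. Connectedness of $D$ is essential here, since it is precisely what ensures that the various $\G_\a$ lie in a single $\UU(k)$-conjugacy class; for a disconnected $D$ the reduction would have to be carried out one component at a time, with the fixed ambient subgroup $\G$ only defined per component.
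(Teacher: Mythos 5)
Your proposal is correct and follows exactly the route the paper intends: the corollary is deduced from the preceding proposition (transition functions already in $\UU(k)$) together with the observation after Definition \ref{def:orbifold-bundle} that $\rho_{\a\b}(\g)=A_{\a\b}(x)\g A_{\a\b}(x)^{-1}$, so that connectedness of $D$ puts all the $\G_\a$ in one $\UU(k)$-conjugacy class and a constant unitary change of fiber coordinates per chart makes them literally equal, forcing the new transition functions into $N_{\UU(k)}(\G)$. The paper leaves this argument implicit, and your write-up supplies it faithfully, including the correct remark that the ambiguity in the choice of conjugators is absorbed into the transition functions.
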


\begin{remark}
Therefore, if an homogeneous isotropy submanifold $D \subset X$ has an isotropy group $\G < \UU(k)$ 
with finite normalizer in $\UU(k)$, then its normal bundle $\nu_D$
has constant transition functions $A_{\a \b}$, so the Chern classes $c_k(\nu_D)=0$
for all $k$.
\end{remark}

\section{Gluing the symplectic form in $\nu_D$ to $0$ preserving positivity}

For later purposes, we will need to change the symplectic form $\o_{\nu_D}$ of Proposition \ref{prop:linear-Kahler}. 
Concretely, we need to make $\o_{\nu_D}$ vanish along the fibers, so we can define an smooth form in the underlying space 
of the orbifold $\nu_D$ by pushing forward via the map $q: \nu_D \to |\nu_D|$,
where $|\nu_D|$ is the underlying space. The map $q$ is simply the quotient map in each orbifold chart.
We need to interpolate the symplectic form in $\nu_D$ with $0$ near $D$,
but preserving semi-positivity with respect to an almost complex structure in $\nu_D$.
We need first a lemma that makes this process in $\CC^k$.
Recall that a $2$-form $\o$ is \emph{positive} with respect to an almost complex structure $J$
if $\o(u,Ju)>0$ for all vectors $u \ne 0$; it is \emph{semipositive} if $\o(u,Ju) \ge 0$.

\begin{lemma} \label{lem:extension}
Let $V \subset \CC^k$ open, and $f:V \to \RR$ a smooth function
such that $\tfrac{\ii}{2} \bd \bar \bd f$ is $J_0$-semipositive.
Let $h: \RR \to \RR$ smooth with $h'\ge 0$, $h''\ge 0$, 
and denote $\o=\tfrac{\ii}{2} \bd \bar \bd f$, $\o_h=\tfrac{\ii}{2} \bd \bar \bd (h \circ f)$.

Then the form $\o_h$ is $J_0$-semipositive.
Moreover, $\o_h$ is $J_0$-positive on the subset of $V$ where
$\o$ is $J_0$-positive and $h' \circ f >0$.
\end{lemma}
\begin{proof}
This is a straightforward computation, see for instance \cite[Lemma 31]{MM-R}.
\end{proof}

\begin{remark} \label{rem:bar-omega_0}
Lemma \ref{lem:extension} will be used mainly in $\CC^k$ for the standard symplectic form $\o_0= \frac{\ii}{2} \bd \bar \bd (|z|^2)$
with a suitable choice of function $h$ as follows. 
Choose numbers $0<t_0<t_1<1$ and let $h : \RR \to \RR$ be a function such that
$h(t)=0$ for $t \le  t_0$, $h(t)=t+c$ for $t \ge  t_1$, for some $c\in \RR$, and with $0 \le h' \le 1$ and $0 \le h'' \le \tfrac{2}{t_1-t_0}$.
For instance, take a smooth function $\varrho$ so that $\varrho$ vanishes in $(-\infty,t_0)$, 
equals $1$ in $(t_1,+\infty)$, and $0 \le \varrho' \le \tfrac{2}{t_1-t_0}$; then define $h(t)=\int_{-\infty}^t \varrho$.
In this case we denote $\bar \o_0:=\o_h=\frac{\ii}{2} \bd \bar \bd ( h(|z|^2))$, which is a $J_0$-positive interpolation
between $0$ and the standard symplectic form.
\end{remark}

%\begin{lemma}
%In a complex manifold $(M,J)$ we have $ d J d f=-2 \ii \bar \bd \bd f$
%for every $f: M \to \CC$ smooth function.
%\end{lemma}
%\begin{proof}
%Since $\bd f \in \O^{1,0}(M)$ and $\bar \bd f \in \O^{0,1}(M)$, we have $J \circ \bd =\ii \bd$.
%Analogously, $J \circ \bar \bd = -\ii \bar \bd$.
%Hence 
%$$
%d  J d=(\bd + \bar \bd) J  (\bd + \bar \bd) 
%= (\bd + \bar \bd) \circ (\ii \bd - \ii \bar \bd) = - 2 \ii \bd \bar \bd
%$$
%where we have used that $\bd^2=0=\bar \bd ^2$ and $\bd \bar \bd= -\bar \bd  \bd$.
%\end{proof}

\begin{remark}
Consider the normal bundle $F \to \nu_D \to D$ endowed with the almost K\"ahler structure $(\o_{\nu_D},J_{\nu_D})$
from Proposition \ref{prop:linear-Kahler}. The splitting 
\begin{equation}\label{eqn:Juan-a}
T \nu_D= TF^{\perp \o_{\nu_D}} \oplus TF = \cHH \oplus \cV \, ,
\end{equation}
satisfies that both $\cHH$ and $\cV$ are almost K\"ahler subbundles. We have a corresponding decomposition of $J_{\nu_D}$ as
\begin{equation}\label{eqn:Juan-b}
J_{\nu_D}=p_{\cHH} \circ J_{\nu_D} + p_{\cV} \circ J_{\nu_D}= J_{\cHH} + J_{\cV} \, ,
\end{equation}
being $p_{\cHH}$ and $p_{\cV}$ the projections onto $\cHH$ and $\cV$ respectively.
Moreover $J_{\nu_D}=J_{\cV}$ in $\cV$ and $J_{\nu_D}=J_{\cHH}$ in $\cHH$.
In particular,
for any $u=u_h+u_v$ with $u_h \in \cHH$ and $u_v \in \cV$ we have $J_{\nu_D}(u)=J_{\cHH}(u_h)+J_{\cV}(u_v)$.
\end{remark}

In the next proposition we modify the simplectic form $\o_{\nu_D}$ of $\nu_D$
from Proposition \ref{prop:linear-Kahler} so as to make it suitable for interpolation later on.

\begin{proposition} \label{prop:linear-kahler-pro}
Let $(X,\o)$ be a symplectic orbifold and $D \subset X$ a HI submanifold.
Consider the normal bundle $F \to \nu_D \to D$ endowed with the almost K\"ahler structure $(\o_{\nu_D},J_{\nu_D})$
from Proposition \ref{prop:linear-Kahler}, the splitting (\ref{eqn:Juan-a})
%$$
%T \nu_D= TF^{\perp \o_{\nu_D}} \oplus TF = \cHH \oplus \cV \, ,
%$$
%where both $\cHH$ and $\cV$ are almost K\"ahler subbundles, 
and the %corresponding 
decomposition (\ref{eqn:Juan-b}). %of $J_{\nu_D}$ as
%$$
%J_{\nu_D}=p_{\cHH} \circ J_{\nu_D} + p_{\cV} \circ J_{\nu_D}= J_{\cHH} + J_{\cV} \, .
%$$
Then we define
$$
\O_{\nu_D} = \pi^* \o_D - \tfrac{1}{4} d J_{\cV} d (|z|^2),
$$
being $|z|$ the height function of the fiber $F \cong \CC^k/\G$. It holds that $\O_{\nu_D}$
is symplectic and compatible with $J_{\nu_D}$ in some neighborhood of the zero section $B_{\d}(D) \subset \nu_D$,
and moreover $\O_{\nu_D}=\o_{\nu_D}=\o$ at all points of the zero section $D \subset \nu_D$.
\end{proposition}

\begin{proof}
%We only need to check that $\O_{\nu_D}=\o_{\nu_D}=\o$ at the points of $D$, because then it follows that
%$\O_{\nu_D}$ is $J_{\nu_D}$-compatible at these points, so it is also $J_{\nu_D}$-compatible
%in a neighborhood of $D$ in $\nu_D$.

Consider trivializations $\nu_D|_{D_\a} \cong D_\a \x \CC^k$ 
as in Corollary \ref{cor:hermitian-fiber-standard} such that $\o_{\nu_D}, J_{\nu_D}$ are the standard symplectic 
form and complex structure restricted to the fibers $\{p\} \x \CC^k$.
Let us denote  the coordinates $(p,z) \in D_\a \x \CC^k$, with $z=x+\ii y$ and $p=(p_1,\dots,p_{2d})$, $d=n-k$.
As the horizontal bundle $\cHH=TF^{\perp \o_{\nu_D}}$ at a point $p \in D \subset \nu_D$ is precisely $T_pD$,
it follows that $(J_{\cV})_p$ maps $\bd_{x_i}$ to $-\bd_{y_i}$ and maps $\bd_{p_i}$ to $0$.
From this we deduce that the expresion of $J_{\nu_D}$ in the coordinates $(p,z)$ is given by
$$
J_{\nu_D}= \sum_{k,j} a_{kj} dp_k \otimes \bd_{x_j} + b_{kj} dp_k \otimes \bd_{y_j} + \sum_{k,l} c_{kl} dp_k \otimes \bd_{p_l}
+ \sum_j dx_j \otimes \bd_{y_j} - dy_j \otimes \bd_{x_j}
$$
where $a_{kj}, b_{kj}$ are functions of $(p,z)$ which vanish at $z=0$. %, $z=x+\ii y$. 
Recall that $J_{\cV}=p_{\cV} \circ J_{\nu_D}$, where $p_{\cV}: T \nu_D \to \cV$ is the projection onto the vertical bundle
associated to the decomposition (\ref{eqn:Juan-a}). % $T \nu_D= \cHH \oplus \cV= TF^{\perp \o_{\nu_D}} \oplus TF$.
The action of $J_{\cV}$ in forms is by the transpose, so it follows that
\begin{align*}
 &J_{\cV}(dx_i)=-dy_i+ \textstyle \sum_k ( a_{ki} + \textstyle \sum_l c_{kl} d_{il} ) dp_k \, , \\
 &J_{\cV}(dy_i)=dx_i+ \textstyle \sum_k ( b_{ki} + \textstyle \sum_l c_{kl} e_{il} ) dp_k\, ,
\end{align*}
where we have called $d_{il}=dx_i(p_{\cV}(\bd_{p_l}))$, $e_{il}=dy_i(p_{\cV}(\bd_{p_l}))$.
It follows that
\begin{align*}
d J_{\cV} d (|z|^2)= &\, d J_{\cV} d (\textstyle \sum_i x_i^2 + y_i^2)=d J_{\cV}(\textstyle \sum_i 2x_i dx_i + 2y_i dy_i) \\
                   =&\,  d(\textstyle \sum_i 2y_i dx_i - 2x_i dy_i) + \\ & +
                   \textstyle \sum_{k} \left( \textstyle \sum_i 2x_i \left( a_{ki} + \textstyle \sum_l c_{kl} d_{il} \right) 
                   + 2y_i \left( b_{ki} + \textstyle \sum_l c_{kl} e_{il} \right) \right) dp_k \\
                   =& -4\o_0 + \textstyle \sum_k \a_k \wedge dp_k\, ,
\end{align*}
where the $1$-forms $\a_k=O(|z|)$ vanish at $z=0$ (recall that the functions $a_{ki}, b_{ki}, d_{il}, e_{il}$ all vanish at $z=0$), and $\o_0= \sum_i dx_i \wedge dy_i$.
Hence
$$
\O_{\nu_D} = \pi^* \o_D - \tfrac{1}{4} d J_{\cV} d (|z|^2) = \pi^* \o_D + \o_0 + O(|z|),
$$
which proves that $\O_{\nu_D}$ is $J_{\nu_D}$-positive in a neighborhood of the zero section $D$
and moreover $\O_{\nu_D}=\o_{\nu_D}=\o$ at all points of $D$.
\end{proof} 
We therefore have a perturbed almost K\"ahler structure $(\O_{\nu_D},J_{\nu_D})$ in $B_{\d}(D) \subset \nu_D$,
which is more suitable for our purposes.
\begin{remark}
The norm we will use for differential forms will be the operator norm at each point
i.e.\ if $\a \in \O^p(M)$, with $M$ an orbifold or a manifold, its norm at a point $x \in M$ is
$||\a||_x=\max_{|u_i|=1} |\a(u_1,\ldots,u_p)|$.
For a subset $U \subset M$ we set $||\a||_U=\sup_{x \in U} ||\a||_x$.
\end{remark}
In the next proposition we make use 
of $\O_{\nu_D}$ and interpolate it positively with $0$.
Recall the map $q: \nu_D \to |\nu_D|$
from the orbifold $\nu_D$
to its underlying space $|\nu_D|$, which consists of applying
fiberwise the quotient $\CC^k \to \CC^k/\G$. 
Consider the fiber bundle
$\CC^k/\G \to |\nu_D| \to D$, with $\bar \pi: |\nu_D| \to D$ the projection.
Clearly, $(\CC^k-\{0\})/\G \to |\nu_D-D| \to D$ is a smooth fiber bundle, and
the restriction $q:\nu_D - D \to |\nu_D - D|$ is a smooth map between smooth manifolds. 

\begin{proposition} \label{prop:linear-symplectic-vanish-fiber}
Let $(X,\o)$ be a symplectic orbifold and $D$ a HI submanifold.
Consider the total space of the normal bundle $\pi: \nu_D \to D$ with the almost K\"ahler structure
$(\O_{\nu_D},J_{\nu_D})$ of Proposition \ref{prop:linear-kahler-pro}. 
Consider the map $q: \nu_D \to |\nu_D|$ as above.

Then, $\nu_D$ admits an orbifold closed $2$-form $\bar \O_{\nu_D}$ such that,
for $\e>0$ small enough we have:
\begin{itemize}
\item $\bar \O_{\nu_D}=\pi^*\o_D$ in $B_\e(D)$, so the push-forward $q_*(\bar \O_{\nu_D})=\bar \pi^* \o_D$ is a smooth form in the smooth manifold $|\nu_D-D|$.
\item $\bar \O_{\nu_D}$ is $J_{\nu_D}$-positive in $B_{2\e}(D) - B_\e(D)$.
\item $\bar \O_{\nu_D}=\O_{\nu_D}$ in $\nu_D - B_{2\e}(D)$.
 \end{itemize}
Moreover, it is given by the formula
$$
\bar \O_{\nu_D}=\pi^* \o_D - \frac{1}{4} d J_{\cV} d(h(|z|^2)),
$$
with $h$ the function of Remark \ref{rem:bar-omega_0}.
\end{proposition}

\begin{proof}
We take a trivialization $\nu_D|_{D_\a} \cong D_\a \x \CC^k$ with coordinates $(p,z)$, $p=(p_1,\dots,p_{2d})$, $z=x+\ii y$ as in Proposition \ref{prop:linear-kahler-pro}, and compute:
\begin{align*}
d J_{\cV} d (h \circ |z|^2)&=d J_{\cV} (h'(|z|^2) d(|z|^2))=d (h'(|z|^2) J_{\cV} d(|z|^2)) \\
                           &=h''(|z|^2)d(|z|^2) \wedge J_{\cV} d(|z|^2) + h'(|z|^2) dJ_{\cV}d(|z|^2) \\
                           &= h''(|z|^2) \b \wedge J_{\cV} \b + h'(|z|^2) (-4 \o_0 + \textstyle \sum_k \a_k \wedge dp_k),
\end{align*}
where we have denoted $\b=d(|z|^2)=\sum_i 2x_i dx_i + 2y_i dy_i$ and used the expression for $dJ_{\cV}d(|z|^2)$
from the proof of Proposition \ref{prop:linear-kahler-pro}.
Now recall that for a non-zero tangent vector $u \in \cV \subset T \nu_D$ we have 
\begin{equation} \label{eq:beta-positive}
(\b \wedge J_{\cV} \b)(u,J_{\cV}u)= -\b(u)^2-\b(J_{\cV}u)^2 \le 0,
\end{equation}
and as $\b$ only vanishes at $D=\{z=0\}$, we see that $(\b \wedge J_{\cV} \b)(\cdot, J_{\cV}\, \cdot)<0$ outside $D$.
Now we take the above into account and obtain
\begin{align*}
\bar \O_{\nu_D} &=\pi^* \o_D - \tfrac{1}{4} d J_{\cV} d(h(|z|^2)) \\
                &=\pi^* \o_D - \tfrac{1}{4} h''(|z|^2) \b \wedge J_{\cV} \b + h'(|z|^2) \o_0 
                -\tfrac{1}{4} h'(|z|^2) \textstyle \sum_k \a_k \wedge dp_k\, .
\end{align*}
%where the $1$-forms $\b$, $\a_k$ vanish at $z=0$, i.e. $\b=O(|z|)$, $\a_k=O(|z|)$. 
%Also $\o_0|_{\cHH}=O(|z|^2)$ since at $D=\{z=0\}$ the horizontal bundle is precisely $TD$. 
We choose the function $h$ as in Remark \ref{rem:bar-omega_0}
taking $t_0=\e^2$, $t_1=(2\e)^2$, so that it satisfies:
$h(t)=0$ for $t \le \e^2$, 
$h'>0$ and $h''>0$ for $\e^2 < t < (2 \e)^2$,
$h(t)=t+c$ for $t \ge (2 \e)^2$, and
$0 \le h' \le 1$, $0 \le h'' \le \frac{2}{3\e^2}$ .
Now we have cases:
\begin{itemize}
\item For $|z| \le \e$ we have $\bar \O_{\nu_D}=\pi^* \o_D$.
\medskip
\item For $|z| \ge  2 \e$ we have $\bar \O_{\nu_D}=\O_{\nu_D}$.
\medskip
\item For $\e < |z| < 2 \e$ we have $\bar \O_{\nu_D}|_{\cV}= - \tfrac{1}{4} h''(|z|^2) \b \wedge J_{\cV} \b|_{\cV} 
+ h'(|z|^2) \o_0|_{\cV}$ which is clearly $J_{\cV}$-positive by \eqref{eq:beta-positive}.
\medskip
\item For $\e < |z| < 2 \e$ we claim that
\begin{align*}
\bar \O_{\nu_D}|_{\cHH} 
                        & = \pi^* \o_D|_{\cHH} - \tfrac{1}{4} h''(|z|^2) (\b \wedge J_{\cV} \b) |_{\cHH} + h'(|z|^2) \o_0|_{\cHH} 
                -\tfrac{1}{4} h'(|z|^2) \textstyle \sum_k (\a_k \wedge dp_k)|_{\cHH} \\
                        & = \pi^* \o_D|_{\cHH} + O(\e),
\end{align*}
where $O(\e)$ represents a $2$-form defined in $\cHH$ with norm $O(\e)$. 
To see the above bound first note that $\a_k=O(|z|)$ from the proof of Proposition \ref{prop:linear-kahler-pro}; 
secondly,
\begin{align*}
   \tfrac{1}{4} (\b  & \wedge J_{\cV} \b)|_{\cHH} \\
&=  \textstyle \sum_i (x_i dx_i + y_i dy_i ) 
\wedge \left( \textstyle \sum_j x_j(-dy_j + O(|z|)) + 2y_j(dx_j + O(|z|)) \right)|_{\cHH} \\
&=  O(|z|^4),
\end{align*}
because $dx_i|_{\cHH}=O(|z|)$, $dy_i|_{\cHH}=O(|z|)$, which also implies that
$\o_0|_{\cHH} =O(|z|^2)$. Using the properties of $h$ above,
this proves the bound $\bar \O_{\nu_D}|_{\cHH} = \pi^* \o_D|_{\cHH} + O(\e)$.
Now, as $\pi^* \o_D|_{\cHH}$ is $J_{\cHH}$-positive
at all the points of the zero section $D$, it follows easily that for $\e>0$ small enough the $2$-form
$\bar \O_{\nu_D}|_{\cHH}$ is also $J_{\cHH}$-positive in $\{\e < |z| < 2 \e\}$. 

%Since $\pi^* \o_D(u,J_{\cHH}u)>0$ for all $u \in \cHH$ with $|u|=1$ 
%and for all points on a compact set $\bar B_\d(D) \subset \nu_D$,
%there exists some constant $c>0$ so that $\pi^* \o_D(u,J_{\cHH}u)>c$ for all $u \in \cHH$ 
%with $|u|=1$ and for all points in $\bar B_\d(D)$.
%Now take $\e < \d$ so that the term $O(\e)$ appearing in $\bar \O_{\nu_D}|_{\cHH}$ satisfies $||O(\e)||< \frac{c}{2}$
%at all points of $B_\e(D)$.
%Clearly $\bar \O_{\nu_D}|_{\cHH}$ is $J_{\cHH}$-positive in $B_\e(D)$ with this choice of $\e$.
\end{itemize}

Finally, the map $q:\nu_D \to |\nu_D|$ is given in each trivialization
as $q:D_\a \x \CC^k \to D_\a \x \CC^k/\G$, $q(p,z)=(p,[z])$,
so for $0<|z| \le \e$ we have $q_* \bar \O_{\nu_D}=q_* \pi^* \o_D= \bar \pi^* \o_D$,
with $\bar \pi:|\nu_D| \to D$ the projection given in charts by $\bar \pi(p,[z])=p$. 
Hence the push-forward $q_* \bar \O_{\nu_D}$ is a smooth $2$-form in the smooth manifold $|\nu_D-D|$
which equals $\bar \pi^* \o_D$ in $\{0<|z|\le \e\} \subset |\nu_D-D|$.
\end{proof}

%%%%%%%%%%%%%%%%%%%%%%%%%%%%%%%
\section{Resolution of the normal bundle}
%%%%%%%%%%%%%%%%%%%%%%%%%%%%%%%

In this section we will use the previous nice structure of the normal bundle $\nu_D$ of an 
$HI$-submanifold $D \subset X$ of a symplectic orbifold $X$,
to construct a symplectic resolution of $\nu_D$.

By Corollary \ref{cor:Gamma}, we fix 
an atlas $\{U_\a\x \CC^k\}$ with $\G < \UU(k)$ acting on the fiber, and with
the transition functions $A_{\a \b}: U_\a  \cap U_\b \to N_{\UU(k)}(\G)$. The group
$G= N_{U(k)}(\G)$ is a closed Lie subgroup of $\UU(k)$ since $\G$ is finite.
In particular $G$ is compact, and acts on $\CC^k/\G$ by matrix multiplication.
Recall that $F_x \cong \CC^k/\G$ is a singular complex variety, hence it admits a constructive
algebraic resolution, see \cite{EV} and \cite{EV_2}.
This resolution has the property that any algebraic action on the singular variety admits a unique
lifting to the resolution. 

\begin{theorem}[{\cite[Prop.\ 7.6.2]{EV_2}}] \label{th:orlando} 
Let $X \subset W$ be a subscheme of finite type of a smooth scheme $W$, with $X$ reduced,
and $\theta \in \Aut(W)$ an algebraic automorphism of $X$.
Let $b: \tilde{X} \to X$ be the constructive resolution of singularities.
Then $\theta: X \to X$ lifts uniquely to an isomorphism $\tilde{\theta}: \tilde{X} \to \tilde{X}$
of the constructive resolution of singularities $\tilde{X}$ of $X$ such that $b \circ \tilde{\theta}= \theta \circ b$.
\end{theorem}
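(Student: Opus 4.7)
The plan is to exploit the canonical nature of the Encinas--Villamayor algorithm. Their constructive resolution $b:\tilde X\to X$ is realised as a finite composition
$$
\tilde X = X_N \to X_{N-1} \to \cdots \to X_1 \to X_0 = X,
$$
obtained from a parallel sequence $W_N\to W_{N-1}\to\cdots\to W_0=W$ of blow-ups of the ambient scheme along smooth closed centers $Y_i\subset W_i$. The crucial feature is that each $Y_i$ is defined as the maximum locus of an upper semicontinuous resolution function $f_i$ on $W_i$ which depends only on local-étale invariants of the embedded pair (together with the bookkeeping of accumulated exceptional divisors). In particular $f_i$ is intrinsic: any automorphism of the marked pair $(W_i,X_i)$ permutes the level sets of $f_i$ and hence preserves $Y_i$.

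Granting this, I would argue by induction on $i$. Set $\theta_0=\theta\in\Aut(W)$, which preserves $X_0=X$ by hypothesis. Assuming $\theta_i\in\Aut(W_i)$ has been produced preserving $X_i$ and the marking by exceptional divisors, intrinsicness of $f_i$ gives $\theta_i(Y_i)=Y_i$. Since the blow-up along a smooth center is a universal construction, $\theta_i$ admits a unique lift $\theta_{i+1}\in\Aut(W_{i+1})$ with $W_{i+1}=\mathrm{Bl}_{Y_i}W_i$; this lift automatically sends the strict transform of $X_i$ to itself and permutes the new exceptional divisor. After $N$ steps, setting $\tilde\theta:=\theta_N|_{\tilde X}$ yields an automorphism of $\tilde X$ with $b\circ\tilde\theta=\theta\circ b$ by construction.

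For uniqueness, note that $b$ is a proper birational morphism restricting to an isomorphism over the smooth locus $X^{sm}\subset X$. Any automorphism $\tilde\theta'$ of $\tilde X$ satisfying $b\circ\tilde\theta'=\theta\circ b$ must coincide with $\tilde\theta$ on the dense open $b^{-1}(X^{sm})$, and therefore on all of $\tilde X$ by separatedness.

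The hard part of the argument is the very first assertion, namely that the resolution function $f_i$ is genuinely intrinsic in the required sense. This is precisely the content that distinguishes the Encinas--Villamayor construction from the original arguments of Hironaka, and verifying it requires tracing through their definition of the Hironaka-type invariant to check that at each stage it is built out of purely coordinate-free data (orders of ideals, maximal contact configurations, and combinatorics of previous exceptional divisors). Once this coordinate-free character is established, the inductive lift and uniqueness steps above are formal consequences.
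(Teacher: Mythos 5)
The paper does not prove this statement: it is quoted verbatim from Villamayor's work (Prop.\ 7.6.2 of the cited reference), and the only argument the authors supply is the one-sentence remark that uniqueness follows from existence because two lifts must agree on the Zariski-open locus where $b$ is an isomorphism. Your uniqueness paragraph is exactly that argument (density of $b^{-1}(X^{sm})$ plus separatedness and reducedness of $\tilde X$), so there you coincide with the paper. For existence, your sketch describes the correct mechanism --- the constructive resolution is a tower of blow-ups of the ambient scheme along centers cut out as maximum loci of an upper semicontinuous resolution function, and an automorphism of the marked pair lifts uniquely through each blow-up once it preserves the center --- and this is indeed how equivariance of the algorithm is established in the Encinas--Villamayor framework. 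But be aware that what you defer as ``the hard part,'' namely that the resolution function is intrinsic (invariant under automorphisms of the embedded pair together with the history of exceptional divisors), is not a verification one can wave at: it \emph{is} the content of the cited proposition, and proving it occupies a substantial portion of the constructive-resolution literature. So your proposal is best read as a correct reduction of the theorem to the equivariance of the resolution invariant, not as a self-contained proof; since the paper itself uses the result as a black box, this is an acceptable level of detail, but you should cite the intrinsicness of the invariant as an input rather than as something ``to be checked.''
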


Note that the uniqueness of the lifting follows immediately from the existence 
because any two liftings have to coincide in the Zariski open set where $b: \tilde{X} \to X$ is an isomorphism.

The compact group $G=N_{\UU(k)}(\G)<\UU(k)$ has a complexification $G^c<\GL(k,\CC)$ which is
an algebraic group.
We claim that $G^c<N_{\GL(k,\CC)}(\G)$. The normalizer $N_{\GL(k,\CC)}(\G)<\GL(k,\CC)$ is a complex Lie
group that contains $G$, hence it contains $G^c$, which is its Zariski closure. 
Thus the group $G^c$ acts naturally on $F=\CC^{k}/\G$ by matrix multiplication, 
i.e.\ $A \cdot [u]=[Au]$ for $A \in G^c$. 
Here the bracket stands for the equivalence class of $u \in \CC^k$ in the quotient $\CC^k/\G$.
For $A\in G^c$, this is well defined because if $[u]=[u']$ then there exists $\g \in \G$ with
$u=\g u'$ and hence $Au=A \g u'= \g' A u'$ for some $\g' \in \G$, since $A \in N_{\GL(k,\CC)}( \G)$.

\begin{proposition} \label{prop:emb}
The fiber $F= \CC^{k}/\G$ and its constructive resolution $\tilde{F}$ are quasi-projective varieties.
\end{proposition}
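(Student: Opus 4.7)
The plan is to treat $F$ and $\tilde F$ separately: first I would show that $F = \CC^k/\G$ is an affine complex algebraic variety, and therefore quasi-projective, via classical invariant theory. Then I would argue that $\tilde F$ inherits quasi-projectivity from $F$ because the Encinas--Villamayor constructive resolution is by construction a finite sequence of blow-ups along smooth closed centers, and each such blow-up preserves quasi-projectivity.

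For the first step, since $\G < \UU(k)$ is finite and acts linearly on $\CC^k$, the action is algebraic, and the Hilbert--Noether theorem guarantees that the ring of invariants $R = \CC[z_1,\dots,z_k]^\G$ is a finitely generated $\CC$-algebra. Because $\G$ is finite, the categorical quotient $\operatorname{Spec}(R)$ coincides set-theoretically with the topological quotient $\CC^k/\G$, which endows $F$ with the structure of an affine complex algebraic variety. Picking algebra generators $f_1,\dots,f_N$ of $R$ produces a closed embedding $F \hookrightarrow \CC^N \subset \CP^N$, so $F$ is quasi-projective.

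For the second step, by the construction of \cite{EV,EV_2}, the constructive resolution $b:\tilde F \to F$ is realized as a finite composition
$$
\tilde F = F_N \to F_{N-1} \to \cdots \to F_1 \to F_0 = F
$$
of blow-ups of algebraic varieties, each along a smooth closed subscheme contained in the singular locus of $F_{i-1}$. Since the blow-up of a quasi-projective variety along a closed subscheme is projective over the base, and hence is itself quasi-projective, an induction on $i$ yields that every $F_i$ is quasi-projective; in particular so is $\tilde F$.

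The only point that requires any care is ensuring that the Encinas--Villamayor process is genuinely algebraic in our setting (i.e., produces schemes of finite type over $\CC$ rather than merely an analytic resolution) and that it applies to the possibly non-normal affine variety $F$; both are precisely the content of the main theorem of \cite{EV_2}, already invoked above as Theorem \ref{th:orlando}. So I do not foresee a real obstacle.
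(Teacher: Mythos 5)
Your proposal is correct and follows essentially the same route as the paper: finite generation of the invariant ring $\CC[z_1,\dots,z_k]^\G$ gives an affine embedding $F\hookrightarrow \CC^N$, and then quasi-projectivity of $\tilde F$ follows because the constructive resolution is a finite sequence of blow-ups (the paper phrases this as blow-ups starting from the ambient $\CC^N$, while you blow up the varieties themselves, but this is an immaterial difference). The extra care you take about the algebraicity of the Encinas--Villamayor process is a reasonable addition, though the paper treats it as implicit.
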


\begin{proof}
Since $\G < \UU(k)$ is a finite group, the quotient $F= \CC^{k}/\G$
is an affine variety, i.e.\ there is an embeding $\imath:F \to \CC^N$ for some $N \in \NN$. 
Indeed $\CC [x_1,\dots ,x_k]^{\G} \subset \CC [x_1,\dots ,x_k]$, the $\CC$-algebra of polynomials 
invariant by the action of $\G$, is a finitely generated $\CC$-algebra, say
$\CC [x_1,\dots ,x_k]^{\G} = \CC [f_1, \dots , f_N ]$ for some $f_j \in \CC[x_1, \dots ,x_k]$.
Defining $\imath: \CC^k/\G \to \CC^N$, $\imath([(x_1,\dots, x_k)])= (f_1(x), \dots , f_N(x))$, we 
have an embedding of $F$ into $\CC^N$. 
This proves that $F$ is an affine variety, hence it is quasi-projective. 
We can use the model $\imath(F) \subset \CC^N$ to perform the resolution of singularities.
The resolution $\tilde{F}$ of $\imath(F)$ is obtained via a finite numbers of blow-ups 
starting from $\CC^N$ so $\tilde{F}$ is quasi-projective.
\end{proof}

Select an embedding $\imath: F=\CC^k/\G \to \CC^N$ as in Proposition \ref{prop:emb}. 
Let $\tilde{F}$ be the constructive resolution of the algebraic variety $\imath (F) \subset \CC^N$.
The action $G^c \x F\to F$, $(g,y)\mapsto g y$, is an algebraic map.
There is a well-defined map $G^c \x \tilde F\to G^c \x \tilde F, (g,y) \mapsto (g, g \cdot y)$,
by Theorem \ref{th:orlando}. 
This is a bijection between smooth algebraic varieties, and it is algebraic
on the Zariski dense open subset $G^c \x \tilde{F}  -  G^c \x Z$, where $Z$ is the exceptional locus.
In particular it is continuous.
Therefore it is algebraic everywhere. This implies
that the map $G^c \to \Aut(\tilde F)$ is also algebraic, 
in particular the map $G \to \Aut(\tilde F)$ is smooth.

Let $b: \tilde{F} \to F$ be the blow-up map, and denote by $Z=b^{-1}(0)$ the exceptional divisor.
For the bundle $\nu_D$, each transition matrix $A_{\a \b}(x) \in G<\UU(k)$
has a corresponding unique lifting $B_{\a \b}(x): \tilde{F} \to \tilde{F}$
which satisfies $b(B_{\a \b}(x) y)= A_{\a \b}(x) (b(y))$, for each $y \in \tilde{F}$, i.e.\ 
$b \circ B_{\a \b}(x)= A_{\a \b}(x) \circ b$.
The maps $B_{\a \b}(x)$ depend smoothly on $x$, since $A_{\a \b}(x)$ depend smoothly on $x$
and the map $G \to \Aut(\tilde F)$ is smooth.

\begin{proposition}
The maps $B_{\a \b}(x)$ for $x \in U_\a \cap U_\b$ are the transition
functions of a smooth fiber bundle $\tilde\nu_D\to D$ with $\tilde{F}$ as fiber.

There is a map $b: \tilde\nu_D \to \nu_D$ which is a diffeomorphism outside
the subbundle $E \to D$ whose fiber is the exceptional locus  $Z\subset \tilde{F}$.
\end{proposition}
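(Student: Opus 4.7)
The plan is to build $\tilde\nu_D$ by gluing together the local pieces $U_\alpha \times \tilde{F}$ using the maps $B_{\alpha\beta}(x)$, and then to define $b$ fiberwise using the blow-up $\tilde{F}\to F$.

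The first step is to verify the cocycle condition $B_{\alpha\beta}(x)\circ B_{\beta\gamma}(x)=B_{\alpha\gamma}(x)$ on triple overlaps $U_\alpha\cap U_\beta\cap U_\gamma$. Since $A_{\alpha\beta}(x)\circ A_{\beta\gamma}(x)=A_{\alpha\gamma}(x)$ as elements of $G<\UU(k)$, both $B_{\alpha\beta}(x)\circ B_{\beta\gamma}(x)$ and $B_{\alpha\gamma}(x)$ are liftings of the same automorphism $A_{\alpha\gamma}(x)$ of $F$ to $\tilde{F}$, in the sense of Theorem \ref{th:orlando}. By the uniqueness of the lift, they coincide. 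Similarly $B_{\alpha\alpha}(x)=\Id_{\tilde{F}}$ by uniqueness. Smoothness of $x\mapsto B_{\alpha\beta}(x)$ was noted in the paragraph preceding the proposition (it follows from smoothness of $A_{\alpha\beta}$ together with the fact that $G^c\to \Aut(\tilde{F})$ is holomorphic). Thus $\{B_{\alpha\beta}\}$ is a genuine smooth cocycle with values in $\Aut(\tilde{F})$, and the associated clutching construction produces a smooth fiber bundle
\begin{equation*}
\tilde{F}\longrightarrow \tilde\nu_D \stackrel{\tilde\pi}{\longrightarrow} D,
\end{equation*}
defined as the quotient of $\bigsqcup_\alpha U_\alpha\times \tilde{F}$ by the equivalence relation $(x,y)_\alpha \sim (x,B_{\beta\alpha}(x)y)_\beta$ for $x\in U_\alpha\cap U_\beta$.

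Next I define the blow-down map $b:\tilde\nu_D \to \nu_D$. Over each chart, set $b_\alpha:U_\alpha\times\tilde{F}\to U_\alpha\times F$, $(x,y)\mapsto (x,b(y))$, where $b:\tilde{F}\to F$ is the constructive resolution of the fiber. To check that these local definitions descend to a well-defined global map, I use the intertwining relation $b\circ B_{\alpha\beta}(x)=A_{\alpha\beta}(x)\circ b$ recalled just before the proposition statement: if $(x,y)_\alpha$ and $(x,B_{\beta\alpha}(x)y)_\beta$ are identified in $\tilde\nu_D$, then
\begin{equation*}
b_\beta(x,B_{\beta\alpha}(x)y) = (x,\,b(B_{\beta\alpha}(x)y)) = (x,\,A_{\beta\alpha}(x)(b(y))),
\end{equation*}
which is precisely the image of $(x,b(y))=b_\alpha(x,y)$ under the transition $A_{\beta\alpha}$ of $\nu_D$. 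Hence $b:\tilde\nu_D\to \nu_D$ is a well-defined smooth map commuting with the projections to $D$.

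Finally, let $Z=b^{-1}(0)\subset \tilde{F}$ be the exceptional locus. Because each $B_{\alpha\beta}(x)$ lifts $A_{\alpha\beta}(x)\in \UU(k)$, which fixes $0\in F$, and because this lift is unique, $B_{\alpha\beta}(x)$ preserves $Z$. Consequently the subsets $U_\alpha\times Z\subset U_\alpha\times\tilde{F}$ glue together to form a smooth subbundle $E\subset \tilde\nu_D$ over $D$ with fiber $Z$. Away from $E$, the fiberwise map $\tilde{F}\setminus Z \to F\setminus\{0\}$ is a biholomorphism, and hence a diffeomorphism; since diffeomorphism is a local property and $b$ is given locally by $(x,y)\mapsto (x,b(y))$, the induced map $b:\tilde\nu_D\setminus E \to \nu_D\setminus D$ is a diffeomorphism. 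The main subtlety is really just the cocycle identity, and once one invokes the uniqueness clause of Theorem \ref{th:orlando} the remainder of the argument is formal.
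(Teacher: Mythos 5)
Your proposal is correct and follows essentially the same route as the paper: the only substantive point is the cocycle identity for the $B_{\alpha\beta}$, which both you and the authors deduce from the uniqueness of the lift in Theorem \ref{th:orlando} (equivalently, that lifting respects composition and identities). You additionally spell out the clutching construction, the well-definedness of $b$ via the intertwining relation, and the diffeomorphism off $E$, all of which the paper leaves implicit but which match its intent.
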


\begin{proof}
We only need to check the cocycle condition.
In a triple intersection we know that $A_{\a \b} \circ A_{\d \a} \circ A_{\b \d}= \Id_F$.
Since lifting respects composition and the identity lifts to the identity, we have that
$B_{\a \b} \circ B_{\d \a} \circ B_{\b \d}= \Id_{\tilde{F}}$, as required. % is the identity on $\tilde{F}$.
\end{proof}

We call ${b}$ the blow-up map, because it is induced on each fiber by the 
blow-up map $b: \tilde{F} \to F$.

The next step consists on constructing a symplectic form on 
the resolution $\tilde{F}$ of the complex variety $F = \CC^k/\G$, with $\G < \UU(k)$ as above.
Here, $F\cong F_x$ is diffeomorphic to the orbifold normal space $(T_xD^{\perp})/\G$ 
of the HI submanifold $D \subset X$.
Since $D$ does not intersect any other isotropy submanifold of the orbifold $X$, we see that $0 \in \CC^k$
is the only fixed point of the action of the group $\G<\UU(k)$.
Hence the singular locus of $F$ reduces to the point $[0] \in F= \CC^k/\G$. 
The exceptional locus is $Z= b^{-1}([0]) \subset \tilde{F}$, and consists of a finite union of irreducible
components $Z_j$ which are divisors intersecting transversally.

\begin{proposition} \label{prop:resolution-fiber-Kahler}
The resolution $\tilde{F}$ of $F= \CC^k/\G$ admits a K\"ahler structure $(\o_{\tilde{F}}, J_{\tilde{F}}, g_{\tilde{F}})$ 
which is invariant by the action of $G=N_{\UU(k)} (\G)$ on $\tilde{F}$.
\end{proposition}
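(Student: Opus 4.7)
The plan is to first produce a K\"ahler form $\o$ on $\tilde F$ and then average it over the compact Lie group $G = N_{\UU(k)}(\G)$ (which is compact as a closed subgroup of $\UU(k)$) to obtain a $G$-invariant K\"ahler form $\o_{\tilde F}$. The complex structure $J_{\tilde F}$ is the natural holomorphic one on $\tilde F$, automatically $G$-invariant because the $G$-action on $\tilde F$ is holomorphic by the discussion preceding this proposition, and $g_{\tilde F}(u,v) := \o_{\tilde F}(u, J_{\tilde F} v)$ will complete the K\"ahler triple.

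For the first step, I use that $\tilde F$ is quasi-projective by Proposition \ref{prop:emb}. More concretely, the constructive resolution realizes $\tilde F$ as the strict transform of $\imath(F) \subset \CC^N$ under a finite composition $\pi\colon \widetilde{\CC^N}\to \CC^N$ of blow-ups along smooth centers. Starting from the standard K\"ahler form $\o_0 = \tfrac{\ima}{2}\partial\bar\partial |z|^2$ on $\CC^N$, I pull it back to $\widetilde{\CC^N}$ and add small correction terms
\[
 \o := \pi^*\o_0 + \sum_{i=1}^{m} \epsilon_i \, \Theta_i,
\]
where $\Theta_i$ is the Chern curvature of a Hermitian metric on the line bundle $\cO(-E_i)$ associated to the $i$-th exceptional divisor. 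By the standard inductive argument for blow-ups, if the $\epsilon_i > 0$ are chosen sequentially and sufficiently small, $\o$ is K\"ahler on $\widetilde{\CC^N}$; restricting to $\tilde F$ gives a K\"ahler form $\o$ on $\tilde F$.

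For the second step, since $G$ is compact and acts on $\tilde F$ by biholomorphisms, I average $\o$ against the normalized Haar measure $d\mu$:
\[
 \o_{\tilde F} := \int_G h^* \o \, d\mu(h).
\]
The result is a closed real $G$-invariant form. Being a holomorphic pullback, each $h^*\o$ is again of type $(1,1)$ and compatible with $J_{\tilde F}$, and the integral of strictly positive Hermitian forms against a probability measure is strictly positive, so $\o_{\tilde F}$ is K\"ahler. Setting $J_{\tilde F}$ and $g_{\tilde F}$ as above yields the desired $G$-invariant K\"ahler structure.

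The main obstacle is the positive definiteness of $\o$ on $\widetilde{\CC^N}$, since $\pi^*\o_0$ degenerates along the directions in each exceptional divisor $E_i$ that are collapsed by $\pi$. The classical local computation for blow-ups shows that $\Theta_i$ is strictly positive along exactly these missing normal directions in a neighborhood of $E_i$, so for small $\epsilon_i$ the sum becomes positive there without spoiling positivity away from $E_i$. The coefficients $\epsilon_i$ are chosen inductively, using the openness of positive definiteness and compactness of the exceptional loci at each stage, so that later corrections do not destroy earlier positivity, completing the construction.
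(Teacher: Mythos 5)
Your proof is correct, and the second half (averaging over the compact group $G=N_{\UU(k)}(\G)$ with respect to Haar measure, noting that each $h^*\o$ remains a positive $(1,1)$-form for the fixed complex structure $J_{\tilde F}$ because $G$ acts by biholomorphisms) is essentially identical to what the paper does. Where you diverge is in producing the initial K\"ahler form: the paper simply observes that $\tilde F$ is quasi-projective (Proposition \ref{prop:emb}), hence a complex submanifold of some $\CC P^N$, and restricts the Fubini--Study form; you instead rebuild the K\"ahler form by hand through the blow-up tower, taking $\pi^*\o_0+\sum_i\epsilon_i\Theta_i$ with $\Theta_i$ the curvature of a metric on $\cO(-E_i)$. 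Your route is the classical ``blow-up of a K\"ahler manifold is K\"ahler'' argument and is more self-contained, but it carries extra technical weight in this non-compact setting: you need the exceptional loci to be compact (which holds here because the singular locus of $F=\CC^k/\G$ is only the origin, so all centers lie over $[0]$), and you need the hermitian metrics on $\cO(-E_i)$ chosen so that $\Theta_i$ is supported near $E_i$ and positive precisely in the collapsed fiber directions; you gesture at these points correctly but they are the places where the details live. The paper's restriction-of-Fubini--Study argument buys brevity and sidesteps all of this, at the cost of invoking quasi-projectivity of the constructive resolution; your argument makes the geometry of the resolution explicit and would survive even without an ambient projective embedding. Both are valid proofs of the statement.
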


\begin{proof}
By Proposition \ref{prop:emb}, $\tilde{F}$ is a quasi-projective variety, so it is a complex submanifold of
some $\CC P^N$ for $N$ high enough. Consider $(\CC P^N, \o_{FS}, J, g_{FS})$
the standard K\"ahler structure on $\CC P^N$, where
$\o_{FS}$ is the Fubini-Study K\"ahler form.
The restriction of $(\o_{FS}, J, g_{FS})$ to $\tilde{F}$ defines a K\"ahler structure $(\o_1, J_{\tilde{F}}, g_1)$ on $\tilde{F}$,
where $J_{\tilde F}$ is the given complex structure on $\tilde F$.

The complex structure $J_{\tilde{F}}$ is preserved by the transition functions $B_{\a \b}(x)$
because they act on $\tilde{F}$ as biholomorphisms.
But the symplectic structure $\o_1$ may not be preserved, so we need to make an average. As 
$G$ is compact, we 
put on $G$ any right-invariant Riemannian metric and call $\mu$ the measure induced by this metric.
Let 
$$
\o_{\tilde{F}}= \frac{1}{\mu(G)} \int_{G} h^*\o_1 d \mu(h) \in \O^2(\tilde{F}).
$$
We claim that $\o_{\tilde{F}}$ is a symplectic form invariant by the action of $G$ on $\tilde{F}$.
For the invariance, take $g \in G$ and compute
 \begin{align*}
 g^* \o_{\tilde{F}} &= \frac{1}{\mu(G)} \int_{G} g^*(h^* \o_1) d \mu(h) \\ 
 &= \frac{1}{\mu(G)} \int_{G} (hg)^*(\o_1) d \mu(h) =\frac{1}{\mu(G)} \int_{G} k^*\o_1 d \mu(k)=  \o_{\tilde{F}},
 \end{align*}
where we have made the change of variables $hg=k$, and $d \mu(h)= d \mu(k)$  since translations are isometries.
The closeness is clear as $d \o_{\tilde{F}}= \frac{1}{\mu(G)} \int_{G} d (h^*\o_1) d \mu(h) =0$.
Finally, let us see that $\o_{\tilde F}$ is a K\"ahler form. As $\omega_1(u,v)=g_1(u,-Jv)$, we have that
$h^*\omega_1(u,v)=h^*g_1(u,-Jv)$, and hence $\omega_{\tilde F}(u,v)=g_{\tilde F}(u,-Jv)$, where
 $g_{\tilde{F}}= \frac{1}{\mu(G)} \int_{G} h^*g_1 d \mu(h)$ is a $G$-invariant Riemannian metric. Moreover
$g_{\tilde F}(Ju,Jv)=g_{\tilde F}(u,v)$. 
This gives a K\"ahler structure $(\o_{\tilde{F}}, J_{\tilde{F}}, g_{\tilde{F}})$ on $\tilde{F}$ invariant by the
action of the group $G$, as desired.
\end{proof}

Let $b:\tilde F\to F$ be blow-up map, $Z=b^{-1}(0) \subset \tilde F$ the exceptional divisor.
So $b:\tilde F-Z \to  F-\{0\}$ is a biholomorphism. 
We now modify the K\"ahler form $\o_{\tilde{F}}$ in the complement of a neighbourhood of $Z$ 
to make it agree with $b^* \o_F$, the K\"ahler form on the fiber $F=\CC^k/\G$. 
Let us introduce some useful notation.
Given a ball $B_\e(0) \subset \CC^n$, denote $B_\e(Z)=b^{-1}(B_\e(0))$.
Clearly $\{ B_\e(Z)| \e>0 \}$ gives a basis of neighborhoods for $Z$ in $\tilde F$.
Take $q: \CC^k \to \CC^k/\G=F$ the quotient map and consider the orbifold symplectic 
form on $F$ which is given by $\o_F=q_* \o_0 \in \O^2(F-\{0\})$. 
Note that $q_* \o_0$ is not defined at the singularity $0 \in F$,
and consequently $b^* q_* \o_0$ is not defined at $Z=b^{-1}(0)$.
To handle this issue, we interpolate $\o_0$ with $0$ near the origin of $\CC^k$ using Lemma \ref{lem:extension}.
Select numbers $t_1>t_0>0$ and take a function $h: [0,\infty) \to \RR$ 
as in Remark \ref{rem:bar-omega_0}.
%such that 
%$h=0$ in $[0,t^2_0]$, $h'>0$ on $(t^2_0,\infty)$, $h'=1$ on $[t^2_1, \infty)$,
%and $h'' \ge 0$ on $[0,\infty)$.
%For instance, this can be done by taking as $h''$ a function $\rho \ge 0$ with support $[t^2_0,t^2_1]$
%that integrates $1$. Then we take $h'(t)= \int_0^t \rho$,
%and $h(t)=\int_0^t h'$. The function $h(t)$ equals $0$ in $[0,t^2_0]$,
%and equals $t + c$ in $[t^2_1,\infty)$, with $c>0$ some constant.
%Recall that the standard symplectic form $\o_0$ has the form 
%$\o_0= \frac{\ii}{2} \sum_i dz_i \wedge d \bar{z}_i= \frac{\ii}{2} \bd \bar \bd (|z|^2)$. 
Consider the closed $2$-form 
$$
\bar{\o}_0= \tfrac{\ii}{2} \bd \bar \bd (h \circ |z|^2) \, .
$$
By Lemma \ref{lem:extension}, $\bar{\o}_0$ is $J_0$-positive
in the set $\{z \in \CC^k |\, |z| > t_0\}$. It vanishes in $\{z \in \CC^k |\, |z| \le t_0\}$,
and moreover $\bar{\o}_0=\o_0$ in the set $\{z \in \CC^k |\, |z| \ge t_1\}$.
Since $q_*(\bar{\o}_0)$ vanishes in a neighborhood of 
the singular point $0 \in F$, we have a smooth form 
$b^* q_* \bar{\o}_0 \in \O^2(\tilde F)$.
As $b: \tilde F \to F$ is holomorphic, the form $b^* q_* \bar{\o}_0$ is $J_{\tilde F}$-positive
on $\tilde F - \bar{B}_{t_0}(Z)=b^{-1}(\{|z| > t_0\})$. Also it vanishes on $B_{t_0}(Z)$,
and $b^* q_* \bar{\o}_0=b^* \o_F$ outside $B_{t_1}(Z)$.
In the following proposition we interpolate the symplectic form $\o_{\tilde F}$ 
constructed in Proposition \ref{prop:resolution-fiber-Kahler} and $b^* q_* \bar{\o}_0$.

\begin{proposition} \label{prop:symplectic-form-tilde F}
Given positive numbers $t_0<t_1$
there exists a K\"ahler form $\O_{\tilde{F}}$ in $\tilde F$ such that:
\begin{itemize}
\item It coincides with the form $b^*(\o_F)$ outside $B_{t_1}(Z)$,
being $\o_F=q_*(\o_0)$ the symplectic form on $F-\{0\}$.
\item It is invariant by the transition functions $B_{\a \b}$ of the bundle $E$.
\end{itemize}
\end{proposition}

\begin{proof}
Consider $(\o_{\tilde F}, J_{\tilde F})$ the K\"ahler structure 
on $\tilde F$ constructed in Proposition \ref{prop:resolution-fiber-Kahler}.
Since $\tilde F - Z \cong \CC^k /\G - \{0\}$, it follows that $H^2(\tilde F-Z,\RR)=0$, 
so $\o_{\tilde F}-b^* q_* \bar{\o}_0= d \eta$ for some $1$-form $\eta \in \O^1(\tilde F - Z)$. 
Select a number $t_2 \in (t_0, t_1)$
and take a positive smooth function $\rho(t)$ which vanishes on $\{t \ge t_1\}$ and equals $1$
on $\{t \le t_2\}$. Consider $\O_{\tilde F, \, \l}= b^* q_* \bar{\o}_0 + \l d( \rho \eta)$ for $\l>0$
to be chosen later. Clearly, we have
$$
\O_{\tilde F, \, \l} =
\begin{cases}
\l \o_{\tilde F} + (1-\l) b^* q_* \bar{\o}_0  , & \text{on } b^{-1}(\{|z| \le t_2\}) \\
\l \rho \o_{\tilde F} + (1- \l \rho) b^* q_* \bar{\o}_0 + \l d \rho \wedge \eta   , & \text{on } b^{-1}(\{t_2 \le |z| \le t_1\}) \\
b^* q_* \bar{\o}_0= b^* \o_F  , & \text{on } b^{-1}(\{|z| \ge t_1\}) \, .
\end{cases}
$$
Now
\begin{itemize}
\item On $b^{-1}(\{|z| \le t_0\})$ we have that $b^* q_* \bar{\o}_0=0$, so $\O_{\tilde F, \, \l}=\l \o_{\tilde F}$ is $J_{\tilde F}$-positive.

\item On $b^{-1}(\{t_0 \le |z| \le t_2\})$ the forms $\o_{\tilde F}$ and $b^* q_* \bar{\o}_0$ are $J_{\tilde F}$-positive and $J_{\tilde F}$-semipositive respectively. This implies that $\O_{\tilde F, \, \l}$ is $J_{\tilde F}$-positive.

\item On $b^{-1}(\{t_2 \le |z| \le t_1\})$, the form $b^* q_* \bar{\o}_0$ is $J_{\tilde F}$-positive
so there exists a constant $c>0$ so that $\tilde \o_1(u,J_{\tilde F}u) \ge  c |u|^2$
for all tangent vectors $u$. On the other hand $|(d \rho \wedge \eta) (u,v)| \le M |u| |v|$ for some $M>0$ independent of $\l$
and all tangent vectors $u,v$.
Hence, choosing $\l>0$ small enough, we can ensure that 
\begin{align*}
\O_{\tilde F, \, \l}(u, J_{\tilde F}u) =(\l \rho \o_{\tilde F} + (1- \l \rho) \tilde \o_1 + \l d \rho \wedge \eta)(u, J_{\tilde F}u) \ge \frac{c}{2} |u|^2 \, .
\end{align*}

\item On $b^{-1}(\{|z| \ge t_1\})$, $\O_{\tilde F, \, \l}=b^* q_* \bar{\o}_0= b^* \o_F$ is clearly
$J_{\tilde F}$-positive.
\end{itemize}

It only remains to make the form $\O_{\tilde F, \, \l}$ invariant by the transition functions $B_{\a \b}$ of the bundle $E$.
As $B_{\a \b}$ take values in the compact group $G=N_{\UU(k)}(\G)$, we can make an average 
as in Proposition \ref{prop:resolution-fiber-Kahler} and the result follows. Note that the $J_{\tilde F}$-positivity is
preserved after this average as the functions $B_{\a \b}$ act on $\tilde F$ by biholomorphisms.
\end{proof}

The proposition above shows that we can construct a symplectic form $\O_{\tilde{F}}$ on the fiber
$\tilde{F}$ of $\tilde{\nu}_D$ which coincides with the symplectic form of $F$
outside a neighborhood of the exceptional set $Z$. This gives a symplectic resolution of the normal fibers $F$
of $\nu_D$. Now we will globalize the construction to obtain a symplectic form in some small neighborhood of the
exceptional locus $E=b^{-1}(D)$ of $\tilde\nu_D$. 
Note that the restriction $b|_E:E \to D$ is a fibre subbundle of $\tilde \nu_D$, whose fiber is $Z$.

%Note that we have a blow-up map 
%$$
%b: \tilde\nu_D \to \nu_D,
%$$
%such that $b^{-1}(F_x)=\tilde F_x$. Let $0_x$ be the origin of the fiber $F_x$, $Z_x=b^{-1}(0_x)\cong Z$ the
%exceptional divisor. We denote $D \subset \nu_D$ the zero section, and $E=b^{-1}(D)$ the exceptional
%locus of the blow-up. 

\begin{remark} \label{rem:cohom-obs}
The question of whether a bundle with symplectic fibers over a symplectic base space admits a symplectic form
defined on the total space of the bundle is not entirely trivial and there are some topological obstructions \cite{GLSW}.
For instance, consider the Hopf fibration $S^1 \to S^3 \to S^2$ and
multiply by $S^1$ to get a torus bundle $S^1 \x S^1 \to S^3 \x S^1 \to S^2$. Both base and fiber are symplectic,
however the total space has trivial second cohomology so it is not symplectic.
\end{remark}

The first thing that we need is to find a cohomology class $[\eta]$
on the manifold $\tilde\nu_D$ that 
restricts to the cohomology class $[\O_{\tilde{F}}]$.
%of the symplectic form
%of the fiber $\tilde{F}$, constructed in Proposition \ref{prop:symplectic-form-tilde F}.
%If we do this, the cohomological obstructions of Remark \ref{rem:cohom-obs} vanish and 
%we will be able to construct a symplectic form on all of $E$ (cf.\ Proposition \ref{prop:symplectic-bundle}).

\begin{proposition}
The homology group $H_{2k-2}(\tilde{F},\ZZ)$ of $\tilde{F}$ is freely generated by the exceptional divisors 
$Z_j$, $j=1,\ldots, l$ (the irreducible components of $Z\subset \tilde{F}$).
In other words $H_{2k-2}(\tilde{F},\ZZ) = \bigoplus_{j=1}^l \ZZ \langle Z_j \rangle$.
\end{proposition}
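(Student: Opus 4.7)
The plan is to reduce the computation to the homology of the exceptional divisor by establishing a deformation retraction $\tilde{F} \simeq Z$, and then to compute $H_{2k-2}(Z)$ via Mayer--Vietoris, using that the irreducible components $Z_j$ are smooth compact divisors whose mutual intersections have too small real dimension to contribute in this degree.

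For the retraction, I would use the scaling $\CC^*$-action on $F = \CC^k/\Gamma$ given by $t\cdot[z] = [tz]$. This is well-defined because $\Gamma < \UU(k)$ acts linearly. The invariant ring $\CC[x_1,\ldots,x_k]^{\Gamma}$ is generated by homogeneous polynomials, so the embedding $\imath:F\hookrightarrow\CC^N$ of Proposition \ref{prop:emb} can be chosen $\CC^*$-equivariant with strictly positive weights on $\CC^N$. By Theorem \ref{th:orlando} together with the uniqueness of liftings (which forces functoriality under composition), the action lifts to an action of $\CC^*$ on $\tilde F$. Since the successive blow-up centers in the Encinas--Villamayor algorithm are canonical, hence $\CC^*$-stable, and all lie over $[0]\in F$, the $\RR_{>0}$-action extends continuously to $t=0$ as a retraction $r:\tilde F\to Z$, giving a strong deformation retract of $\tilde F$ onto $Z$. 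In particular $H_*(\tilde F)\cong H_*(Z)$.

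For the computation on $Z$, each irreducible component $Z_j$ is a smooth connected complex manifold of complex dimension $k-1$, and it is compact because $b:\tilde F \to F$ is proper and $Z = b^{-1}([0])$. All $r$-fold intersections $Z_{i_1}\cap\cdots\cap Z_{i_r}$ with $r\ge 2$ are smooth of complex dimension at most $k-2$, hence of real dimension at most $2k-4$. I would then run the Mayer--Vietoris spectral sequence
\[
 E_1^{p,q} \;=\; \bigoplus_{i_0<\cdots<i_p} H_q(Z_{i_0}\cap\cdots\cap Z_{i_p}) \;\Longrightarrow\; H_{p+q}(Z).
\]
In total degree $2k-2$ the dimension count leaves only $E_1^{0,2k-2} = \bigoplus_j H_{2k-2}(Z_j)$, and the differentials into and out of this slot vanish for the same reason. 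Since each $Z_j$ is a closed orientable $(2k-2)$-manifold, $H_{2k-2}(Z_j)\cong\ZZ\langle[Z_j]\rangle$, yielding $H_{2k-2}(\tilde F)\cong\bigoplus_{j=1}^l \ZZ\langle Z_j\rangle$.

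The main obstacle I anticipate is the continuous extension of the lifted $\CC^*$-action to $t=0$: one needs to know that the limit $\lim_{t\to 0^+} t\cdot x$ exists in $\tilde F$ and lies in $Z$ for every $x\in\tilde F$. This can be proved by induction on the stages of the constructive resolution, using that each blow-up of a $\CC^*$-invariant center in a $\CC^*$-space with positive weights retracts onto its exceptional divisor. If one prefers to avoid this, an alternative is to deduce $\tilde F\simeq Z$ from a tubular neighbourhood of $Z$ in $\tilde F$ together with the contractibility of $F$, or to bypass the retraction entirely by analysing the long exact sequence of the pair $(\tilde F, Z)$ via excision and the homotopy type $\tilde F\setminus Z\cong F\setminus\{[0]\}\simeq S^{2k-1}/\Gamma$.
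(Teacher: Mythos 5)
Your proof is correct and follows essentially the same route as the paper: both deduce $H_*(\tilde F)\cong H_*(Z)$ from a deformation retraction obtained by lifting the radial scaling on $F=\CC^k/\Gamma$ to the resolution, and both then isolate $\bigoplus_{j=1}^l H_{2k-2}(Z_j)$ via the dimension count $\dim_{\RR}(Z_i\cap Z_j)\le 2k-4$. The only cosmetic differences are that the paper collapses the pairwise intersections to reduce to a wedge of the $Z_j$ where you run the Mayer--Vietoris spectral sequence, and that the paper simply asserts the retraction by lifting the radial vector field $r\frac{\partial}{\partial r}$ where you give the more careful equivariant-lifting argument.
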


\begin{proof}
The exceptional locus $Z$ of the constructive resolution of singularities of \cite{EV} is a tree
of exceptional divisors $Z_j$ with normal crossings. 
By transversality, $Z_i\cap Z_j$ for $i\neq j$ is of complex dimension $\leq (k-2)$, 
hence of real dimension $\leq (2k-4)$. So  
 \begin{align*}
  H_{2k-2}(Z) &= H_{2k-2}\left(Z/(\cup_{i\neq j} (Z_i\cap Z_j))\right)=H_{2k-2}\left(
  \bigvee\nolimits{}_{j=1}^l Z_j/(\cup_{i\neq j} (Z_i\cap Z_j))\right) \\
 &\cong \bigoplus_{j=1}^l H_{2k-2}\left(Z_j/(\cup_{i\neq j} (Z_i\cap Z_j))\right) =
 \bigoplus_{j=1}^l H_{2k-2}(Z_j) = \bigoplus_{j=1}^l \ZZ\la Z_j \ra.
 \end{align*}
There is a deformation retract from $\tilde{F}$ to $Z$ induced by lifting the radial vector field $-r\frac{\bd}{\bd r}$ 
from $F=\CC^k/\G$ to $b:\tilde{F}\to F$. Therefore $H_{2k-2}(\tilde{F}) =H_{2k-2}(Z)=\bigoplus_{j=1}^l \ZZ\la Z_j \ra$,
as required.
\end{proof}

This proposition means that in the bundle $\tilde{F} \to \tilde\nu_D \to D$ there is a canonical
unordered basis for $H_{2k-2}(\tilde{F})$ at the level of chains, namely the set of exceptional divisors.
Note that for each ordering of the exceptional divisors $Z_j$, we have a basis of $H_{2k-2}(\tilde{F})$,
but the transition functions $B_{\a \b}(x): \tilde{F} \to \tilde{F}$ 
induce a permutation on this basis, so it is the (unordered) set $\{Z_1, \ldots, Z_l\}$ what is preserved.

%This property of the bundle $\tilde\nu_D$ will be crucial to construct a symplectic form
%on the total space $\tilde\nu_D$.

Poincar\'e duality for $\tilde F$ gives an isomorphism
$$ 
PD: H_c^2(\tilde F,\RR) \stackrel{\cong}{\longrightarrow} H_{2k-2}(\tilde F,\RR).
$$
Note that $H_c^2(\tilde F,\RR)\cong H^2(\tilde F,\RR)$. To see this, consider the 
radial function $r:\tilde F\to [0,\infty)$ given by $r(y)=|b(y)|$, and the open sets 
$$
A_R=B_R(Z)=\{y\in \tilde F| r(y) < R\}=b^{-1}(B_R(0)/\G) \subset \tilde F
$$
for each $R>0$. Then it follows that
$$
H^2_c(\tilde F,\RR) \cong H^2_c(A_R,\RR) \cong H^2( \bar{A}_R,\bd A_R,\RR) \cong
H^2( \bar A_R,\RR) \cong H^2(\tilde F,\RR) \, .
$$
The first isomorphism is obtained using the Mayer-Vietoris sequence for cohomology with compact support.
The third comes from the exact sequence for relative cohomology, using that $\bd A_R \cong S^{2k-1}/\G$ has 
$H^i(\bd A_R,\RR)=H^i(S^{2k-1},\RR)^{\G} \cong 0$ for all $i \ne 2k-1$. 
The fourth is proved either using the Mayer-Vietoris sequence for ordinary cohomology, or using the fact that
$\tilde F$ deformation retracts onto $\bar A_R$.

%%%%%%%%%%%%%%%%%%%%%%%%%%%%%%%%%%%%%%%
\section{Symplectic form on the resolution of the normal bundle}
%%%%%%%%%%%%%%%%%%%%%%%%%%%%%%%%%%%%%%%

First we deal with the cohomological obstruction mentioned in Remark \ref{rem:cohom-obs}.

%Now we construct a global symplectic form on a neighborhood of $E \subset \tilde\nu_D$ whose restriction
%to any fiber $\tilde F$ of $\tilde \nu_D$ coincides with
%$\Omega_{\tilde F}$, the K\"ahler form contructed in Proposition \ref{prop:symplectic-form-tilde F}. 
%The construction will provide a symplectic form on a neighbourhood of the exceptional locus $E\subset \tilde\nu_D$. 

\begin{proposition}
Let $\tilde{F} \to {\tilde{\nu}_D} \xrightarrow{\tilde \pi} D$ be as before, 
with $(\tilde{F},\O_{\tilde{F}}, J_{\tilde{F}})$ the K\"ahler structure on $\tilde{F}$.
There exists a cohomology class $a \in H^2({\tilde{\nu}_D},\RR)$ 
whose restriction to each fiber is $[\O_{\tilde{F}}] \in H^2(\tilde F,\RR)$.
\end{proposition}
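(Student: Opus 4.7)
The plan is to expand $[\O_{\tilde{F}}]$ in the Poincar\'e dual basis of $H^2(\tilde{F},\RR)$ provided by the exceptional divisors, and then realize each basis element globally as the Poincar\'e dual in $\tilde\nu_D$ of a subvariety of the exceptional locus $E$. From the preceding proposition together with the chain of isomorphisms $H^2(\tilde{F},\RR)\cong H^2_c(\tilde{F},\RR)\cong H_{2k-2}(\tilde{F},\RR)$ set up above, one obtains
$$
H^2(\tilde{F},\RR)\;\cong\;\bigoplus_{j=1}^l\RR\la Z_j\ra.
$$
Let $u_j\in H^2(\tilde{F},\RR)$ denote the class Poincar\'e dual to $[Z_j]$, and write $[\O_{\tilde{F}}]=\sum_j c_j u_j$ with $c_j\in\RR$.

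Next I exploit the action of the structure group $G=N_{\UU(k)}(\G)$ of $\tilde\nu_D$. Each $h\in G$ acts on $\tilde{F}$ as a biholomorphism preserving the exceptional locus $Z$, and hence induces a permutation of the finite set $\{Z_1,\ldots,Z_l\}$; the identity component $G^0\subset G$ acts trivially on this discrete set, so the permutation action factors through the finite group $G/G^0$. Since $\O_{\tilde{F}}$ was built $G$-invariantly in Proposition \ref{symplectic-fiber}, the coefficients $c_j$ are constant along $G$-orbits. Let $I_1,\ldots,I_s$ denote these orbits and $c_{I_\sigma}$ the common value of $c_j$ on $I_\sigma$.

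Because the transition functions $B_{\a\b}(x)$ of $\tilde\nu_D$ take values in $G$, the $G$-invariant union $\mathcal{Z}_{I_\sigma}:=\bigcup_{j\in I_\sigma}Z_j\subset\tilde{F}$ is preserved under all changes of trivialization, so the local products $U_\a\x\mathcal{Z}_{I_\sigma}$ assemble into a closed real codimension two subvariety $\hat{\mathcal{Z}}_\sigma\subset E\subset\tilde\nu_D$, which is a subbundle of $\tilde\nu_D\to D$ with fiber $\mathcal{Z}_{I_\sigma}$. It is a union of smooth submanifolds meeting with normal crossings along strata of real codimension $\ge 4$, so it admits a well-defined Poincar\'e dual $a_\sigma\in H^2(\tilde\nu_D,\RR)$, for instance as the sum of the Thom classes of the tubular neighborhoods of its smooth components.

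Setting $a:=\sum_{\sigma=1}^s c_{I_\sigma}\,a_\sigma\in H^2(\tilde\nu_D,\RR)$, in any local trivialization $U_\a\x\tilde{F}$ the subvariety $\hat{\mathcal{Z}}_\sigma$ equals $U_\a\x\mathcal{Z}_{I_\sigma}$, so $a_\sigma$ is pulled back from the Poincar\'e dual of $\mathcal{Z}_{I_\sigma}$ in $\tilde{F}$, namely $\sum_{j\in I_\sigma}u_j$. Restricting $a$ to any fiber $\tilde{F}_x$ therefore yields $\sum_\sigma c_{I_\sigma}\sum_{j\in I_\sigma}u_j=\sum_j c_j u_j=[\O_{\tilde{F}}]$, as required. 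The main subtlety in this plan is the assembly of the fiberwise orbit unions $\mathcal{Z}_{I_\sigma}$ into the global subbundles $\hat{\mathcal{Z}}_\sigma$: this rests crucially on the reduction of the structure group of $\tilde\nu_D$ to $G$ combined with the $G$-invariance of $\O_{\tilde{F}}$, after which the remaining verifications reduce to standard naturality properties of Poincar\'e duality on the smooth strata of $\hat{\mathcal{Z}}_\sigma$.
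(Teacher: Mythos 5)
Your argument is correct and follows essentially the same route as the paper: both expand $\PD[\O_{\tilde F}]$ in the basis of exceptional divisors, use the fact that the transition functions preserve $[\O_{\tilde F}]$ while permuting the $Z_j$ to conclude that the coefficients are constant on orbits, and realize $a$ as the Poincar\'e dual of the resulting global cycle of weighted exceptional divisors. The only difference is packaging: the paper builds that cycle from a triangulation of $D$ and verifies $\bd A=0$ by explicit cancellation over shared boundary faces, whereas you assemble the orbit-unions $\mathcal{Z}_{I_\sigma}$ into global subbundles of $E$ and take their Thom classes.
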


\begin{proof}
Consider the atlas of the bundle ${\tilde{\nu}_D}$ consisting of charts
$\phi_\a: U_\a \times \tilde{F}\to \tilde V_\a  \subset {\tilde{\nu}_D}$, and with
change of trivializations
$B_{\a \b}: U_\a \cap U_\b \to \Sympl(\tilde{F}, \O_{\tilde F})$.
We refine the open cover given by the $U_\a \subset D$ in such a way that 
there exists a smooth map $T_\a: [0,1]^{2n-2k} \to U_\a$ with image $Q_\a \subset U_{\a}$, so
that the simplices $Q_\a$ form a triangulation of $D$.
As $D$ is compact and symplectic, it is an oriented manifold of dimension $2n-2k$. Let
$[D]\in H_{2n-2k}(D)$ denote its fundamental class, which can be defined by the 
chain $\sum_\a Q_\a\in C_{2n-2k}(D)$.

On the other hand, consider the cohomology class $[\O_{\tilde F}] \in H^2(\tilde{F},\RR)$.
We saw before that $H^2(\tilde{F},\RR) \cong H^2_c(\tilde{F},\RR)$,
and by Poincar\'e duality $H^2_c(\tilde{F},\RR) \cong H_{2k-2}(\tilde{F},\RR)$.
Choose a basis $\{ Z_1 ,\dots ,Z_l\}$ of exceptional divisors of $H_{2k-2}(\tilde F)$.
There exists unique real numbers $ a_i \in \RR$ so that 
$\PD [\O_{\tilde F}]=\sum_{i=1}^l a_i [Z_i]$.
For each trivialization $\phi_\a: U_\a \times \tilde{F} \to \tilde V_\a \subset {\tilde{\nu}_D}$,
consider the chain 
 $$
 A_\a= \sum_{i=1}^l  a_i \phi_\a( Q_\a \x Z_i ) \in C_{2n-2}({\tilde{\nu}_D}).
 $$
We claim that the chain $A= \sum_\a A_\a$ is closed,  
so it defines a homology class $[A] \in H_{2n-2}({\tilde{\nu}_D})$.
We have  
 \begin{equation}\label{eqn:bdA}
 \bd A= \sum \bd A_\a=\sum_\a \sum_i  a_i \phi_\a (\bd Q_\a \x Z_i).
 \end{equation}
If $x\in \bd Q_\a \cap \bd Q_\b \subset U_\a \cap U_\b$, the 
transition function $g=B_{\a \b}(x):\tilde{F} \to \tilde{F}$
is a symplectomorphism of $(\tilde{F}, \O_{\tilde F})$, hence 
it preserves the homology class $\PD([\O_{\tilde F}])= \sum_{i=1}^l a_i [Z_i]$.
On the other hand, $g$ permutes the exceptional divisors $Z_i$. 
But if $g(Z_{i_1})=Z_{i_2}$ then the corresponding coefficients in $[\O_{\tilde F}]$ are the same, 
i.e.\ $a_{i_1}= a_{i_2}$. This follows from the equality
$\PD [\O_{\tilde F}]= \sum_{i=1}^l a_i [Z_i] = (g)_*(\PD [\O_{\tilde F}])= \sum_{i=1}^l a_i [g(Z_i)]$,
by looking on both sides at the coefficient of $[Z_{i_2}]$.
Therefore, if $g(Z_{i_1})=Z_{i_2}$ then 
 \begin{equation}\label{eqn:bdA2}
 a_{i_1} \phi_\a \left( T \times Z_{i_1} \right) + a_{i_2} \phi_\b \left( T \times Z_{i_2} \right) = 0 
  \in C_{2n-3}({\tilde{\nu}_D}),
 \end{equation}
where $T\subset \bd Q_\a\cap \bd Q_\b$ is a $(2n-3)$-simplex that is common to the boundary of both
$Q_\a$ and $Q_\b$. Note that we are taking into account that the orientations of $T$ induced 
by $Q_\a$ and $Q_\b$ are opposite. Plugging (\ref{eqn:bdA2}) into (\ref{eqn:bdA}), we get that
$\bd A=0$.

Hence $A \in H_{2n-2}({\tilde{\nu}_D})$ determines via Poincar\'e duality a unique 
${a}=[\eta] \in H^2_c({\tilde{\nu}_D},\RR)$
so that $\PD({a})=A$. The relation between ${a}=[\eta]$ and $A$ is given by the equality 
$\int_{\tilde{\nu}_D}  \eta \wedge \b = \int_A \b$, for all $[\b] \in H^{2n-2}({\tilde{\nu}_D})$.
To see that the cohomology class $[\eta]$ restricts to $[\O_{\tilde F}]$ over each fiber $\tilde{F}$,
we need to check that
$\int_{\tilde{F}} \eta \wedge \g = \int_{\tilde{F}} \O_{\tilde F} \wedge \g$ for all $[\g] \in H^{2k-2}(\tilde{F})$.
For this, take any $x \in D$ with fiber $\tilde{F}_x \subset {\tilde{\nu}_D}$, and some $Q_{\a}$ containing $x$.
Take any $[\g] \in H^{2k-2}(\tilde{F}_x)$. 
Consider a bump $2(n-k)$-form $\nu \in \O^{2n-2k}(D)$ with support contained in $Q_{\a}$ 
and $\int_D\nu =1$. 
Then $\tilde \pi^*\nu$ has support in $Q_{\a} \times \tilde{F}$ and so
\begin{align*}
  \int_{\tilde{F}_x} \eta \wedge \g &= \int_{Q_\a \x \tilde F_x} \eta \wedge \g \wedge \tilde \pi^*\nu 
= \int_{\tilde{\nu}_D} \eta \wedge \g \wedge \tilde \pi^*\nu = \int_A \g \wedge \tilde \pi^*\nu \\
 &= \int_{A \cap (Q_\a\x \tilde F)} \g \wedge \tilde \pi^*\nu 
 = \sum_i a_i \int_{Q_\a\x Z_i}\g \wedge \tilde \pi^*\nu = \sum_i a_i \int_{Z_i} \g 
 = \int_{\tilde{F}_x} \O_{\tilde F} \wedge \g.
\end{align*}
\end{proof}

In \cite{Th2} it is given a construction of a symplectic form on the total space of a fiber bundle with 
symplectic base and compact symplectic fibers, once we know the existence of a cohomology
class that restricts to the cohomology class of the sympletic form on the fibers. We have to
do a slight extension to a case with non-compact symplectic fiber. 

%We start with a lemma.
%
%\begin{lemma} \label{non-degenerate}
%Let $(B,g)$ be a compact Riemannian manifold and let $\o$ be a symplectic form in $B$.
%There exists a constant $m >0$ which satisfies the following.
%For each $x \in B$ and $u \in T_x B$, there exists $v \in T_x B$
%so that $\o(u,v) \ge m | u | | v |$.
%\end{lemma}
%
%\begin{proof}
%Let $S(TB)$ be the unit sphere bundle of $B$, and 
%consider the function $s:S(TB) \to \RR$, $s(x,u)=\max_{v \in S(T_xB)} \o (u,v)$.
%This is a continuous function, which is strictly positive 
%since $\o$ is symplectic. % we see that $s$ is a positive function.
%It follows that $s$ attains a minimun $m$ on the compact set $S(TB)$, so
%for all $x \in B$ and for all $u \in T_xB$ with $| u |=1$ there exists $v \in T_x B$
%with $| v | =1$ so that $\o(u,v) \ge m$. This implies the required assertion.
%\end{proof}

\begin{definition} \label{def:proper-symp-bun}
Let $B$ be a compact manifold, and $(N,\o_N)$ a
(possibly non-compact) symplectic manifold with a proper \emph{height} function $H:N\to [0,\infty)$.
A \emph{proper symplectic bundle} is a fiber bundle $N\to M \to B$ such  that the transition functions 
take values in $\Sympl(N,\o_N,H)=\{ f:N\to N| f^*\o_N=\o_N, H\circ f=H\}$. 
\end{definition}

If $N\to M\to B$ is a proper symplectic bundle, then the height function $H$ defines
a smooth proper function $H_M:M\to [0,\infty)$. 
For $R>0$, we introduce the sets $M_R=H_M^{-1}([0,R])\subset M$ and $N_R=H^{-1}([0,R])\subset N$.
Then $N_R$ and $M_R$ are compact and $N_R\to M_R\to B$ is a fibre bundle. If $R>0$ is a regular value of $H$,
then $(N_R,\o_R)$ is a symplectic manifold with boundary, so $N_R\to M_R\to B$ is a compact symplectic
bundle.

\begin{proposition} \label{prop:symplectic-bundle}
Let $N \to M \stackrel{\pi}{\longrightarrow} B$ be a proper symplectic bundle, where the base space $(B,\o_B)$ 
is a compact symplectic manifold, $(N,\o_N)$ is a symplectic manifold with height function $H:N\to [0,\infty)$.
Suppose that there exists a cohomology class $e \in H^2(M,\RR)$ which restricts to $[\o_N]$ on every fiber.
Fix $R>0$. 
Then there exists a closed $2$-form $\o_{M,K} \in \O^2(M)$ which is non-degenerate 
on $M_R \subset M$, so that $\o_{M,K}$ restricts to $\o_N$ on every fiber $N_x=\pi^{-1}(x) \subset M$.
\end{proposition}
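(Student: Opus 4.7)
The plan is to mimic Thurston's construction \cite{Th2} of a symplectic form on a symplectic fiber bundle, already invoked by the authors in Section 5, adapting it to cope with the non-compact fiber $N$ by restricting attention to the compact region $M_R$. Starting from a closed representative $\eta\in\O^2(M)$ of the class $e$, the plan has two main tasks: first, replace $\eta$ by a cohomologous representative $\tilde\eta$ whose restriction to each fiber $N_x$ is \emph{exactly} $\o_N$; second, add a sufficiently large multiple of $\pi^*\o_B$ to recover non-degeneracy on $M_R$.

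For the first step, I would fix a locally finite cover $\{U_\a\}$ of $B$ by contractible open sets trivializing the bundle, with trivializations $\phi_\a:\pi^{-1}(U_\a)\to U_\a\x N$, and let $p_\a=\pi_N\circ\phi_\a$ be the projection onto $N$. The form $p_\a^*\o_N$ restricts to $\o_N$ on each fiber, and because $U_\a$ is contractible, K\"unneth gives $H^2(\pi^{-1}(U_\a),\RR)\iso H^2(N,\RR)$ via restriction to a fiber; hence $\eta|_{\pi^{-1}(U_\a)}$ and $p_\a^*\o_N$ are cohomologous, and I can write
 $$
 \eta|_{\pi^{-1}(U_\a)}-p_\a^*\o_N=d\sigma_\a
 $$
for some $\sigma_\a\in\O^1(\pi^{-1}(U_\a))$. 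Given a partition of unity $\{\rho_\a\}$ on $B$ subordinated to $\{U_\a\}$, set
 $$
 \tilde\eta:=\eta - d\Bigl(\sum_\a (\pi^*\rho_\a)\,\sigma_\a\Bigr),
 $$
each summand extended by zero outside $\pi^{-1}(U_\a)$. A pointwise computation on a fiber $N_x$, using that $\pi^*\rho_\a$ is constant along $N_x$, that $(p_\a^*\o_N)|_{N_x}=\o_N$, and that $\sum_\a \rho_\a(x)=1$, yields $\tilde\eta|_{N_x}=\o_N$ exactly.

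For the second step, define $\o_M:=\tilde\eta + K\pi^*\o_B$, which is closed and still restricts to $\o_N$ on fibers. The hypothesis that transition functions preserve $H$ makes $H$ descend to a smooth proper function $H_M:M\to[0,\infty)$, so $M_R=H_M^{-1}([0,R])$ is a compact sub-bundle $N_R\to M_R\to B$. At a point $p\in M_R$, the vertical subspace $V_p=\ker(d\pi_p)$ is $\tilde\eta$-symplectic since $\tilde\eta|_{V_p}=\o_N$, hence $T_pM=V_p\oplus H_p$ with $H_p=V_p^{\perp_{\tilde\eta}}$ the $\tilde\eta$-symplectic complement; then $d\pi$ maps $H_p$ isomorphically to $T_{\pi(p)}B$ and $\tilde\eta(V_p,H_p)=0$. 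If a tangent vector $u\in T_pM$ satisfies $\iota_u\o_M=0$, pairing with arbitrary vertical vectors forces the vertical component of $u$ to vanish by non-degeneracy of $\o_N$; pairing with the horizontal lift of the witness vector supplied by Lemma~\ref{non-degenerate} applied to $\pi_*u\in T_{\pi(p)}B$, together with the uniform bound on $|\tilde\eta|$ over the compact set $M_R$, forces the horizontal component to vanish once $K$ is taken large enough.

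The principal obstacle is precisely the uniform bound on $\tilde\eta$: in Thurston's classical compact-fiber setting it is automatic, but here it crucially requires the proper-symplectic structure of Definition~\ref{def:proper-symp-bun} to guarantee that $M_R$ is compact. Everything else, namely the existence of the primitives $\sigma_\a$, the pointwise identity $\tilde\eta|_{N_x}=\o_N$, and the closedness of $\o_M$, is essentially automatic, and the non-degeneracy argument on $M_R$ reduces to the standard ``large $K$'' trick once the compactness and the Lemma \ref{non-degenerate} estimate are combined.
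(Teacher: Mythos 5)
Your proposal is correct and follows essentially the same route as the paper: fix the fiberwise restriction of a representative of $e$ using local primitives $\sigma_\a$ and a partition of unity (your $\tilde\eta$ is exactly the paper's $\eta+\sum_\a d((\pi^*\rho_\a)\theta_\a)$), then add $K\pi^*\o_B$ and invoke Lemma~\ref{non-degenerate} together with compactness of $M_R$ to get non-degeneracy for large $K$. The only cosmetic difference is in the last step, where you verify non-degeneracy via the vertical/$\tilde\eta$-horizontal splitting and a kernel argument, while the paper works with a metric splitting and treats separately a neighborhood of the vertical unit sphere bundle; both are valid and rely on the same uniform estimates over the compact set $M_R$.
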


\begin{proof}
Take $e=[\eta]$ with $\eta \in \O^2(M)$ a representative of the class $e$.
Take $U_{\a}$ a good cover of $B$ so that
$\phi_\a: U_\a \times N \to V_\a \subset M$ are trivialisations of the bundle $M$,
and the transition functions $g_{\a \b}:U_\a \cap U_\b \to \Sympl(N, \o_N,H)$.
On each trivialisation the (locally defined) 
vertical projection $q_\a: U_\a \times N \to N$ induces an isomorphism in cohomology, 
hence $(\phi^{-1}_{\a})^* q_\a^* \o_N - \eta|_{V_\a}= d \theta_\a$ 
for some $1$-form $\theta_\a \in \O^1(V_\a)$.
Take a partition of unity $\rho_\a$ subordinated to the open cover $U_\a$ of $B$ and
define
 \begin{equation}\label{eqn:oM}
 \o_{M,K}= K \pi^*(\o_B) + \eta + \sum_{\a} d((\pi^* \rho_\a) \theta_\a),
 \end{equation}
for a real number $K>0$ to be chosen later.
We claim that $\o_{M,K}$ is symplectic in $M_R \subset M$ if $K>0$ is large enough. 
The form $\o_{M,K}$ is clearly closed. We rewrite it as
 \begin{align*}
 \o_{M,K} &= K \pi^* \o_B + \eta + \sum_\a (\pi^*d \rho_\a) \wedge \theta_\a + 
\sum_\a (\pi^* \rho_\a) \wedge ((\phi^{-1}_{\a})^* q_\a^* \o_N - \eta)  \\
 &= K \pi^* \o_B + \sum_\a (\pi^*d \rho_\a ) \wedge \theta_\a 
 + \sum_\a (\pi^* \rho_\a ) (\phi^{-1}_{\a})^* q_\a^* \o_N = K \pi^* \o_B + \mu   \, .
\end{align*}
On a fiber $N_x=\pi^{-1}(x)$, we have 
 $$
 (\o_{M,K})|_{N_x}= \mu|_{N_x} = \sum_\a \rho_\a(x) (\phi^{-1}_{\a})^* q_\a^* \o_N=
\sum_\a \rho_\a(x) \o_N=\o_N,
 $$ 
since all $\phi_\a:\{x\}\x N\to N_x$ are symplectomorphims.
We are using here that the transition functions of the bundle are symplectomorphisms of $(N,\o_N)$.

%{\color{red}
%Remark: lo que ahora viene lo he puesto en azul porque he cambiado un argumento algo fishy
%de la anterior demostración, donde parecía que había una dependencia recursiva de la $K$.
%}

To see that $\o_{M,K}$ is non-degenerate on $M_R$, 
take a non-zero vector $u \in T_yM$ and let us see that there exists another vector $u'$
such that $\o_{M,K}(u,u') \ne 0$.
If $u \in T_yN_{\pi(y)}$ lies in the tangent space to the fiber, then the existence of $u'$
with $\o_{M,K}(u,u') \ne 0$ is clear since by construction
$\o_{M,K}|_{N_{\pi(y)}}=\mu|_{N_{\pi(y)}}=\o_N$ is symplectic. 

Let us consider the distribution 
$$
W=TN^{\perp \mu}=\{ v \in TM \, | \,\, \mu(v,\cdot)|_{TN}=0\} \subset TM \, .
$$
Since $\mu|_{TN}$ is non degenerate, we have a direct sum $TN \oplus W = TM$.
Moreover, since $\ker \pi_*=TN$, we have that $W=TN^{\perp \mu}= TN^{\perp \o_{M,K}}$, and
by compactness of $M_R$ it follows the existence of constants $c_1, c_2 >0$ such that
$c_1 |w| \le |\pi_*(w)| \le c_2 |w|$ for all $w \in T W|_{M_R}$.
Also, as $\o_B$ is a symplectic form on $B$, there exists a constant $c_0>0$
such that, given a vector $b \in TB$, there exists another vector $b' \in TB$ with
$\o_B(b,b') \ge c_0 |b|\, |b'|$.

Now take $u \in TM_R$, and write $u=n + w$ with $n \in TN$, $w \in W$.
We write $u'=n' + w'$, so we have
\begin{align*}
\o_{M,K}(u,u') & = (K \pi^* \o_B )(n+w,n'+w') + \mu(n+w,n'+w') \\
 & = (K \pi^* \o_B )(w,w') + \mu(n,n') + \mu(w,w') \, .
\end{align*}
Note that $\mu(w,n')=\mu(w',n)=0$ by the definition of $W$.

In case $w=0$ then $u=n \in TN$ and then we know that we can take $u'=n'$ also in $TN$
since $\mu|_{TN}$ is symplectic.
If $w \ne 0$ then we take a vector $u'=w' \in W$ with $|w'|=1$
and such that $\o_B(\pi_* w, \cdot)$ restricted to the sphere 
attains its maximun in the direction of $\pi_*(w')$; then
\begin{align*}
\o_{M,K}(u,u') & \ge K | \pi^* \o_B (w,w')| - || \mu || \cdot |w| \, |w'| \\
 & \ge K c_0 |\pi_* w| |\pi_* w'| - || \mu || \cdot |w|\,  |w'|  \\
 & \ge K c_0 c^2_1 |w| |w'| - || \mu || \cdot |w| \, |w'| \\
 & = |w|( K c^2_1 c_0  - || \mu || ) >0 \, ,
\end{align*}
as long as we take $K > \frac{|| \mu ||}{c_0 c^2_1}$.
\end{proof}

Applying Proposition \ref{prop:symplectic-bundle} to the symplectic bundle $\tilde{F}\to \tilde\nu_D\to D$
with symplectic fiber $(\tilde F,\O_{\tilde F})$ and height function given by $H(y)=|b(y)|$ for $y\in \tilde F=
\CC^k/\Gamma$, we have the following.

\begin{theorem} \label{th:symplectic-form}
The bundle $\tilde{F} \to {\tilde{\nu}_D} \xrightarrow{\tilde \pi} D$ admits closed $2$-form $\o_{\tilde{\nu}_D}$ so that:
\begin{itemize}
\item The restriction of $\o_{\tilde{\nu}_D}$ to each fiber $\tilde{F}_x$ coincides with $\O_{\tilde{F}}$.

\item If $E \subset {\tilde{\nu}_D}$ is the exceptional locus, 
then the form $\o_{\tilde{\nu}_D}$ is non-degenerate on a
neighborhood $U^{E}$ of $E$ in ${\tilde{\nu}_D}$.
\end{itemize}
\end{theorem}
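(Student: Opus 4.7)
The plan is a direct application of Proposition \ref{prop:symplectic-bundle} to the bundle $\tilde{F}\to\tilde{\nu}_D\to D$, so the work reduces to verifying its hypotheses in the current setting.

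First I would equip the fiber $\tilde{F}$ with the height function $H:\tilde{F}\to[0,\infty)$ given by $H(y) = |b(y)|^2$, where $|\cdot|$ is the norm on $F = \CC^k/\Gamma$ descended from the standard $\UU(k)$-invariant Hermitian norm on $\CC^k$ (well defined since $\Gamma<\UU(k)$). Because the constructive resolution map $b:\tilde{F}\to F$ is proper, $H$ is a smooth proper function. The transition functions $B_{\alpha\beta}(x):\tilde{F}\to\tilde{F}$ preserve $\Omega_{\tilde{F}}$ by Proposition \ref{symplectic-fiber}, and they also preserve $H$: this follows from the intertwining $b\circ B_{\alpha\beta}(x) = A_{\alpha\beta}(x)\circ b$ together with $A_{\alpha\beta}(x)\in N_{\UU(k)}(\Gamma)<\UU(k)$ acting by norm-preserving unitary transformations on $\CC^k/\Gamma$. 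Hence $\tilde{F}\to\tilde{\nu}_D\to D$ is a proper symplectic bundle in the sense of Definition \ref{def:proper-symp-bun}.

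Next, the base $(D,\omega|_D)$ is a compact symplectic manifold (a closed symplectic suborbifold of $(X,\omega)$, smooth because $D$ is homogeneous), and the cohomology class demanded by Proposition \ref{prop:symplectic-bundle} is precisely the class $a\in H^2(\tilde{\nu}_D,\RR)$ produced in the preceding proposition, whose restriction to every fiber $\tilde{F}_x$ is $[\Omega_{\tilde{F}}]$. To extract the neighbourhood statement, I observe that the exceptional locus $E$ coincides with the zero level set $H_M^{-1}(0)\subset\tilde{\nu}_D$, since $b(y) = 0$ iff $y$ lies in an exceptional divisor of its fiber. Fixing any sufficiently small $R>0$, Proposition \ref{prop:symplectic-bundle} then yields a closed $2$-form $\omega_{\tilde{\nu}_D}$ that restricts to $\Omega_{\tilde{F}}$ on every fiber and is non-degenerate on the compact set $M_R = H_M^{-1}([0,R])$; the open set $U^E = H_M^{-1}([0,R))$ is the desired neighbourhood of $E$. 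I expect no serious obstacle, since the heavy lifting (fiberwise symplectic form, globally defined cohomology class, and the symplectic globalisation lemma) has already been carried out; the only subtle point is the invariance of $H$ under the transition functions, which as noted is immediate from the $\UU(k)$-equivariance of the constructive resolution established earlier.
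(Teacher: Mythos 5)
Your proposal is correct and follows essentially the same route as the paper: the paper obtains Theorem \ref{th:symplectic-form} precisely by applying Proposition \ref{prop:symplectic-bundle} to the proper symplectic bundle $\tilde F\to\tilde\nu_D\to D$ with height function $H(y)=|b(y)|$, leaving implicit the hypothesis checks (invariance of $H$ and of $\O_{\tilde F}$ under the transition functions $B_{\a\b}$, compactness of $D$, and the cohomology class from the preceding proposition) that you spell out. Your use of $|b(y)|^2$ in place of $|b(y)|$ and the identification of $U^E$ as a sublevel set of the induced height function are the same argument in slightly more detail.
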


The form $\o_{\tilde{\nu}_D}$ has the expression
\begin{align*}\label{eqn:alfa}
 \o_{\tilde{\nu}_D} & = K \tilde \pi^*(\o_D) + \eta + \textstyle \sum_{\a} d[(\pi^* \rho_\a) \theta_\a] \\
                    & = K \tilde \pi^*(\o_D) + \eta + d \mu \, .                    
\end{align*}
%                    & = K \tilde \pi^* \o_D + \sum_\a d (\tilde \pi^* \rho_\a) \wedge \theta_\a 
% + \sum_\a (\tilde \pi^* \rho_\a)  (\phi_\a^{-1})^*\O_{\tilde{F}}
for some $K>0$ large enough, a 
finite atlas of symplectic-bundle charts $\phi_\a: U_{\a} \x \tilde{F} \to V_\a \subset {\tilde{\nu}_D}$,
some $1$-forms $\theta_\a$, and a partition of unity $\rho_\a$ subordinated to the cover $U_\a$ of $D$.

\medskip

Consider the resolution map $b: \tilde \nu_D \to \nu_D$ and $E=b^{-1}(D)$.
Recall that $b$ is a diffeomorphism from $\tilde \nu_D-E$ to $\nu_D-D$.
Consider the almost complex structure $J_{\nu_D}$ from
Proposition \ref{prop:linear-Kahler}, defined in $B_r(D) \subset \nu_D$.
We have an induced almost complex structure $b^*J_{\nu_D}=(b_*)^{-1} J_{\nu_D} b_*$ 
defined in $B_r(E) - E$. In the next proposition we see that the symplectic form
of $\o_{\tilde \nu_D}$ is compatible with $b^* J_{\nu_D}$.

%{\color{red} Remark: Esto tuve que hacerlo de este modo porque no era capaz de extender $b^* J_{\nu_D}$
%a una estructura casi-compleja en todo $\tilde \nu_D$.}

\begin{proposition} \label{prop:positive-outside-E}
Consider the symplectic form $\o_{\tilde \nu_D}$ from Theorem \ref{th:symplectic-form}
defined in some neighborhood $B_r(E)=b^{-1}(B_r(D)) \subset \tilde \nu_D$.
Then 
for $r>0$ small enough, $\o_{\tilde{\nu}_D}$ is $b^*J_{\nu_D}$-positive on $B_r(E)-E$.
\end{proposition}

\begin{proof}
Consider the splitting $T \nu_D= \cHH \oplus \cV=TF^{\perp \O_{\nu_D}} \oplus TF$, and the splitting
$$
T \tilde \nu_D=\tilde \cHH \oplus \tilde \cV = T \tilde F^{\perp \tilde \o_{\nu_D}} \oplus T \tilde F \, .
$$ 
Recall that $F=\CC^k/\G$ has a natural complex structure $J_F$, and $\tilde F$ has also a complex structure $J_{\tilde F}$ 
from Proposition \ref{prop:resolution-fiber-Kahler} so that the resolution map $b|_{\tilde F}: (\tilde F,J_{\tilde F}) \to (F,J_F)$ is holomorphic. Since $J_{\nu_D}|_{F}=J_F$, it follows that $b^*J_{\nu_D}|_{\tilde F}=b^*J_F=J_{\tilde F}$. 
We check now that $\o_{\tilde{\nu}_D}$ is $b^*J_{\nu_D}$-positive in $\tilde \cHH$ and $\tilde \cV$.
If $u \in \tilde \cV= T \tilde F$ we have
\begin{align*}
\o_{\tilde \nu_D}(u,b^*J_{\nu_D} u)& =\o_{\tilde \nu_D}(u,(b_*)^{-1}J_{\nu_D} b_* u)  \\
                                   &= \o_{\tilde \nu_D}(u,J_{\tilde F} u) = \O_{\tilde F} (u,J_{\tilde F}u) >0 \, .
\end{align*}
If $u \in \tilde \cHH$ then
\begin{equation} \label{eq:omega-tilde-nu}
\o_{\tilde \nu_D}(u,b^*J_{\nu_D} u) =
K \tilde \pi^* \o_D(u,b^*J_{\nu_D} u) + \eta(u,b^*J_{\nu_D} u) + d \mu (u,b^*J_{\nu_D} u) \, .
\end{equation}
If we see that $\tilde \pi^* \o_D(b^* J_{\nu_D}u,u)>0$ we are done, because then we can take $K>0$ big enough so that
$K \tilde \pi^* \o_D$ dominates and $\o_{\tilde \nu_D}(u,b^*J_{\nu_D} u)>0$. We compute
\begin{align*}
\tilde \pi^* \o_D(u,b^* J_{\nu_D}u) &=\o_D(\tilde \pi_*u, \tilde \pi_* b_*^{-1} J_{\nu_D} b_*u) \\
                                    &=\pi^* \o_D(b_*u,J_{\nu_D} b_*u) \\
                                    & =\pi^* \o_D(p_{\cHH}( b_*u), J_{\cHH} p_{\cHH}( b_*u)) \ge c | p_{\cHH}( b_*u)|^2,
\end{align*}
where we have used that $\tilde \pi_*=\pi_*b_*$ in the second equality, and that $\pi^* \o_D$
is $J_{\cHH}$-positive in the horizontal distribution $\cHH$ at points of some neighborhood $B_{\d}(D) \subset \nu_D$.
Note that $p_{\cHH} : T \nu_D \to \cHH$ is the projection onto the horizontal distribution.
Also, $u \in \tilde \cHH= T \tilde F^{\perp \o_{\tilde \nu_D}}$ is away from $\tilde \cV=T \tilde F$. 
On the other hand $\ker b_*$ is nonzero only at points of $E$, 
and on such points $\ker b_* \subset T \tilde F$, so $|b_*u| \ge c_1 |u|$ for all vectors $u \in \tilde \cHH$
and all points in some neighborhood $B_{\d}(E)$, with $c_1>0$ a constant.
Moreover, since $b_*|_{\tilde \cHH}$ is injective also at the points of $E$ and 
$b_*(T \tilde F) \subset TF$, it follows that $b_*(\tilde \cHH) \oplus \cV = T \nu_D$,
so $| p_{\cHH}( b_*u)| \ge c_2 |u|$ for all $u \in \tilde \cHH$
and all points in some neighborhood $B_{\d}(E)$, with $c_2>0$ a constant.

It follows that $\tilde \pi^* \o_D(b^* J_{\nu_D}u,u) \ge c_3 |u|^2$ for all vectors $u \in \tilde \cHH$
and all points in $B_{\d}(E) - E$, with $c_3>0$ a constant. Now, looking at \eqref{eq:omega-tilde-nu},
it is immediate that if we take $K>0$ large enough
we can achieve that $\o_{\tilde \nu_D}(u,b^*J_{\nu_D} u)>0$ for all non-zero vectors $u \in \tilde \cHH$
and all points in $B_{\d}(E)-E$, as desired.
\end{proof}

%%%%%%%%%%%%%%%%%%%%%%%%%%%%%%%%%%%%%%%%
\section{Gluing the symplectic form}
%%%%%%%%%%%%%%%%%%%%%%%%%%%%%%%%%%%%%%%%

Finally, we glue the symplectic form $\o_{\tilde\nu_D}$ constructed in Theorem \ref{th:symplectic-form}
with the symplectic form of the symplectic orbifold $(X, \o)$. 
%Recall some notations of the previous sections. We have a symplectic fiber bundle $\tilde \pi:\tilde\nu_D \to D$ with
%fiber $\tilde F$, the exceptional divisor $E\subset \tilde\nu_D$ is a fiber sub-bundle $\tilde \pi:E\to D$ with fiber $Z$,
%the blow-up map is denoted $b:\tilde\nu_D \to \nu_D$, 
%and the orbifold normal bundle is denoted $\pi: \nu_D \to D$.
%Note that $E=b^{-1}(D)$ is the preimage of the zero section $D \subset \nu_D$.
%By Proposition \ref{prop:linear-kahler-pro},
%the orbifold $\nu_D$ admits a closed orbifold $2$-form $\O_{\nu_D}$, which is symplectic on a neighbourhood of 
%the zero section, and by Proposition \ref{prop:linear-symplectic-vanish-fiber} 
%it also admits a closed $2$-form $\bar \O_{\nu_D}$ interpolating $J_{\nu_D}$-positively $\O_{\nu_D}$ and $\pi^* \o_D$.
%There is also a radial function $\tilde H(y)=|b(y)|$, for $y\in \tilde\nu_D$, 
%where $b(y)\in F_{\pi(y)}\cong \CC^k/\G$ and $|b(y)|$ is its norm in $\CC^k$. 
%We denote $B_r(E)=\{ y\in \tilde\nu_D | \tilde H(y) < R\}=b^{-1}(B_r(D))$ for $r>0$. 
Fix a neighbourhood $B_{r_0}(E)\subset\tilde\nu_D$ of the exceptional locus
such that $\o_{\tilde\nu_D}$ is symplectic on $B_{r_0}(E)$, as provided by Theorem \ref{th:symplectic-form},
and so that $\o_{\tilde\nu_D}$ is $b^*J_{\nu_D}$-positive in $B_{r_0}(E)-E$ as in Proposition \ref{prop:positive-outside-E}.

\begin{proposition} \label{prop:gluing}
For $\e>0$ small enough there exists a symplectic form $\O_{\tilde \nu_D}$ on $B_{r_0}(E)$ so that $\O_{\tilde \nu_D}
=\e \o_{\tilde \nu_D}$ on some small neighborhood $B_{\d/4}(E) \subset B_{r_0}(E)$, and
$\O_{\tilde \nu_D}={b}^*(\O_{\nu_D})$ outside some larger neighborhood $B_{2\d}(E)\subset B_{r_0}(E)$.
\end{proposition}

\begin{proof} 

By construction $\o_{\tilde\nu_D}= K \tilde \pi^*(\o_D) + \eta + \sum_{\a} d((\tilde \pi^* \rho_\a) \theta_\a)$, where
the form $\eta$ is a representative of the Poincar\'e dual of the homology class
given by the cycle $A=\sum_\a  \sum_i a_i Q_\a \x Z_i$.
In particular we can take $\eta$ to be very close to a Dirac delta around the cycle $A$, hence
we can suppose that the support of $\eta$ is contained in a small neighborhood of $E$, 
say  $B_{\d/2}(E)$.

On the other hand, consider the orbifold normal bundle $\pi: \nu_D \to D$ of $D$ in $X$ 
with symplectic form $\O_{\nu_D}$ constructed in Proposition \ref{prop:linear-kahler-pro},
so that a neighbourhood of the zero section in $(\nu_D, \O_{\nu_D})$ is symplectomorphic 
to a tubular neighbourhood of $D$ in $X$. Consider also $\bar \O_{\nu_D}$ 
the $J_{\nu_D}$-positive interpolation of $\O_{\nu_D}$ with $0$ 
from Proposition \ref{prop:linear-symplectic-vanish-fiber}. We construct $\bar \O_{\nu_D}$
so that $\bar \O_{\nu_D}=0$ on $B_{\d/4}(D)$, $\bar \O_{\nu_D}=\O_{\nu_D}$ outside $B_{\d/2}(D)$,
and $\bar \O_{\nu_D}$ is $J_{\nu_D}$-positive outside $B_{\d/4}(D)$.
By construction we have $\bar \O_{\nu_D}=\pi^* \o_D -\tfrac{1}{4} d J_{\cV} d (h(|z|^2))$ so it follows that
$$
{b}^*(\bar \O_{\nu_D})= \tilde \pi^* (\o_D) -\tfrac{1}{4} d( b^* (J_{\cV} d (h(|z|^2)) ) \, .
$$ 
On the other hand, outside the support of $\eta$, we have 
$\o_{\tilde\nu_D} =K \tilde \pi^*(\o_D) + d \left(\sum_{\a} (\tilde \pi^* \rho_\a) \theta_\a\right)$.
This implies that $K {b}^*(\bar \O_{\nu_D})$ and $\o_{\tilde\nu_D}$ define the same cohomology class outside
$B_{\d/2}(E)$, so there exists a $1$-form $\g$ such that $ \o_{\tilde\nu_D} - K b^*(\bar \O_{\nu_D}) = d \g$ 
on $B_{r_0}(E) - B_{\delta/2}(E)$. 
Define 
$$
\O_{\tilde \nu_D}= {b}^*(\bar \O_{\nu_D}) + \e\, d (\rho \g),
$$
with $\rho:\tilde\nu_D \to [0,1]$ a bump function
so that $\rho \equiv 1$ on $B_\d(E)$ and $\rho \equiv 0$ outside $B_{2\d}(E)$.
On $B_{\d}(E)  -  B_{\d/2}(E)$ the above formula for $\O_{\tilde \nu_D}$ satisfies 
\begin{equation}\label{eqn:OW}
\O_{\tilde \nu_D}={b}^*( \bar \O_{\nu_D}) + \e \, d  \g=
(1-K \e) {b}^*( \bar \O_{\nu_D})+ \e \o_{\tilde\nu_D}\, ,
\end{equation}
so we extend $\O_{\tilde \nu_D}$ with the same formula to $B_{\d/2}(E)$ and we have a closed $2$-form $\O_{\tilde \nu_D}$
defined in $B_{r_0}(E)$. Let us see that $\O_{\tilde \nu_D}$ is symplectic by cases.
\begin{itemize}
\item On $B_{\d/2}(E)$ the form $\O_{\tilde \nu_D}$ satisfies 
$$
\O_{\tilde \nu_D} = (1-K \e) {b}^*( \bar \O_{\nu_D})+ \e \o_{\tilde\nu_D}\, ,
$$
and in $B_{\d/4}(E)$ the above becomes $\O_{\tilde \nu_D} = \e \o_{\tilde\nu_D}$. Hence $\O_{\tilde \nu_D}$
is clearly symplectic in $B_{\d/4}(E)$. To see that $\O_{\tilde \nu_D}$ is symplectic in $B_{\d/2}(E)  -  B_{\d/4}(E)$,
we note that $\bar \O_{\nu_D}$ is $J_{\nu_D}$-positive outside $B_{\d/4}(D)$ and
$\o_{\tilde\nu_D}$ is $b^*J_{\nu_D}$-positive outside $E=b^{-1}(D)$.
This yields that both $b^* \bar \O_{\nu_D}$ and $\o_{\tilde\nu_D}$ are 
$b^* J_{\nu_D}$-positive forms en $B_{\d/2}(E)  -  B_{\d/4}(E)$.
Hence, if we take $\e<\frac{1}{K}$ we have $\O_{\tilde \nu_D}(u,b^*J_{\nu_D}u) >0$.

\item On $B_{\d}(E)  -  B_{\d/2}(E)$, since $\bar \O_{\nu_D}= \O_{\nu_D}$, the form $\O_{\tilde \nu_D}$ satisfies 
$$
\O_{\tilde \nu_D} = (1-K \e) {b}^*( \O_{\nu_D})+ \e \o_{\tilde\nu_D}
$$
and both $b^* \O_{\nu_D}$ and $\o_{\tilde\nu_D}$ are $b^* J_{\nu_D}$-positive forms,
hence $\O_{\tilde \nu_D}(u,b^*J_{\nu_D}u) >0$.

\item On $B_{2\d}(E)  -  B_{\d}(E)$ we have $\O_{\tilde \nu_D}={b}^*( \O_{\nu_D}) + \e \, d (\rho \g)$.
As ${b}^*( \O_{\nu_D})$ is $b^* J_{\nu_D}$-positive outside $E$, there exist a constant $c>0$
such that 
$$
{b}^*( \O_{\nu_D})(u,b^* J_{\nu_D}u) \ge c |u|^2,
$$ 
for all points of $B_{2\d}(E)  -  B_{\d}(E)$
and all tangent vectors. From here it follows that
\begin{align*}
\O_{\tilde \nu_D} (u,b^* J_{\nu_D}u) & ={b}^*( \O_{\nu_D})(u,b^* J_{\nu_D}u) + \e \, d (\rho \g)(u,b^* J_{\nu_D}u) \\
                                     & \ge c |u|^2 - \e || d (\rho \g) || \cdot || b^* J_{\nu_D} || \cdot |u|^2 \\
                                     & \ge  \tfrac{c}{2} |u|^2,
\end{align*}
if we take $\e<\frac{c}{2 ||d (\rho \g)|| \cdot || b^* J_{\nu_D}||}$ .

\item Finally, on $B_{r_0}(E)  -  B_{2\d}(E)$ we have $\O_{\tilde \nu_D}={b}^*(\O_{\nu_D})$ as we want.
\end{itemize}

\end{proof}

Take the form $\O_{\tilde \nu_D}$ constructed in Proposition \ref{prop:gluing}. 
It is symplectic on some neighborhood $B_{r_0}(E) \subset \tilde\nu_D$ of $E$.
By Proposition \ref{prop:tubular-sympl}, for $r_0>0$ small, $B_{r_0}(D) \subset \nu_D$ 
and some neighborhood $\cV \subset X$ of $D$ are symplectomorphic via
$$
\varphi: (B_{r_0}(D), \O_{\nu_D}) \to (\cV, \o) \, .
$$
We define 
 $$
 \tilde{X}=B_{r_0}(E) \cup_f (X  -  \varphi (B_{2 \d}(D)) ) ,
 $$
with $\d>0$ as given in Proposition \ref{prop:gluing}.
The gluing map is 
$$
f= \varphi \circ b : (B_{r_0}(E)  -  B_{2 \d}(E),\O_{\tilde \nu_D}) \to (V,\o) \subset \cV \subset X,
$$
whose image is some open set $V \subset \cV$. 
Since $f^*(\o)= b^* \varphi^* \o= b^* \O_{\nu_D}= \O_{\tilde \nu_D}$, we see that
$f$ is a symplectomorphism. Hence $\tilde{X}$ is a symplectic manifold. We have proved the following:

\begin{theorem} \label{thm:main}
Let $(X, \o)$ be a symplectic orbifold such that all its isotropy set consists of homogeneous disjoint embedded submanifolds
in the sense of definition \ref{def:HI}.
There exists a symplectic manifold $(\tilde{X}, \tilde{\o})$ and a smooth map $b: (\tilde{X},\tilde{\o}) \to (X,\o)$
which is a symplectomorphism outside an arbitrarily small neighborhood of the isotropy points.
\end{theorem}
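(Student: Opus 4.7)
The plan is to assemble the pieces that have been built up in the preceding sections. Since the isotropy locus of $X$ is, by assumption, a disjoint union of homogeneous isotropy submanifolds, it suffices to perform the resolution near a single one, $D \subset X$, of codimension $2k$ with isotropy group $\G < \UU(k)$, and then iterate over the connected components. First I would invoke Proposition \ref{prop:tubular-sympl} to obtain an orbifold symplectomorphism $\varphi: (\cU, \tilde{\o}) \to (\cV, \o)$ between a neighborhood $\cU \subset \nu_D$ of the zero section and a tubular neighborhood $\cV \subset X$ of $D$, where $\tilde{\o}$ is the closed $2$-form supplied by Proposition \ref{prop:linear-symplectic}. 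This reduces the problem to desingularizing $(\nu_D, \tilde{\o})$ near $D$.

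Next I would construct the fiberwise resolution $b:\tilde{\nu}_D \to \nu_D$ of Section 4, whose fiber is the constructive resolution $\tilde{F}$ of $F = \CC^k/\G$ (Proposition \ref{prop:emb}) and whose exceptional locus is the sub-bundle $E \subset \tilde{\nu}_D$. On the total space I would apply Theorem \ref{th:symplectic-form} to obtain a closed $2$-form $\o_{\tilde\nu_D}$ which restricts to the symplectic form $\O_{\tilde F}$ on every fiber and is non-degenerate on a neighborhood of $E$. Then Proposition \ref{prop:gluing} supplies, for $\e>0$ small, a symplectic form $\O_W$ on a neighborhood $W$ of $E$ such that
$$
\O_W = (1-\e)\, b^*(\tilde{\o}) + \frac{\e}{K}\, \o_{\tilde\nu_D}
$$
on an inner neighborhood $U_\delta$ of $E$, while $\O_W = b^*(\tilde{\o})$ outside a slightly larger neighborhood $U_{\delta'}$ with $\delta < \delta' < R_0$.

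It remains only to glue this local symplectic resolution into $X$. Choosing $\e > \delta'$ so that $\bar U_\e := \varphi(b(U_\e))$ contains the transition region, I would define
$$
\tilde{X} := W \cup_f (X \setminus \bar U_\e), \qquad f := \varphi \circ b : W \setminus U_\e \to V \subset X.
$$
On $W \setminus U_\e$ we have $\O_W = b^*(\tilde{\o})$ and $b$ is a diffeomorphism onto its image, so $f^*\o = b^*\varphi^*\o = b^*\tilde{\o} = \O_W$; hence $f$ is a symplectomorphism, the gluing is well defined, and the patched $2$-form $\tilde{\o}$ is symplectic on $\tilde{X}$. The natural map $b:\tilde{X}\to X$ (extended by $\varphi\circ b$ on $W$ and the identity on $X\setminus \bar U_\e$) is smooth, and it is a symplectomorphism outside the arbitrarily small neighborhood $\bar U_\e$ of $D$. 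Performing this procedure simultaneously near each connected component of the isotropy set (which are mutually disjoint by the HI hypothesis) yields the desired global $(\tilde X, \tilde \o)$.

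The main difficulty has already been absorbed into the prior sections: the construction of the symplectic form $\o_{\tilde\nu_D}$ on the non-compact fiber bundle $\tilde\nu_D \to D$, which required both the existence of a cohomology class restricting fiberwise to $[\O_{\tilde F}]$ and the extension of Thurston's construction to proper symplectic bundles (Proposition \ref{prop:symplectic-bundle}). Given these, the proof of Theorem \ref{thm:main} is essentially the bookkeeping of gluing outlined above.
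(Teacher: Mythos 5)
Your proposal is correct and follows essentially the same route as the paper: it assembles Propositions \ref{prop:linear-symplectic}, \ref{prop:tubular-sympl}, Theorem \ref{th:symplectic-form} and Proposition \ref{prop:gluing}, then glues $W$ to $X\setminus \bar U_\e$ along $f=\varphi\circ b$, verifying $f^*\o=b^*\tilde\o=\O_W$ on the overlap. The only cosmetic difference is that you make explicit the (trivial) iteration over the disjoint components of the isotropy locus, which the paper leaves implicit.
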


\begin{remark}
If the isotropy submanifold $D \subset X$ is such that its normal tangent spaces $F=\CC^k/\G$
are not singular spaces (for instance, when $D$ has codimension $2$ in $X$), 
then the constructive resolution has $\tilde{F}=F$
and $E=D$. In this case Theorem \ref{thm:main} serves to obtain a smooth symplectic form
on $X$ from an orbifold symplectic form. This construction appears in \cite{MRT}.
\end{remark}

\section{Examples}
In this section, we want to give some examples where we can apply Theorem \ref{thm:main-thm}.

\smallskip \noindent \textbf{Example 1. A symplectic divisor.}
Let $(X,\o)$ be a symplectic orbifold of dimension $2n$ such that the isotropy locus $D\subset X$ is a divisor,
that is, $\dim D=2n-2$, and the isotropy is given by $\G=\ZZ_k=\la g\ra$ acting on the normal space $\CC$ by
$g(z)=e^{2\pi i/k} z$. Then $X$ is topologically a manifold since $\CC/\ZZ_k$ is homeomorphic to $\CC$.
The algebraic resolution of $F=\CC/\ZZ_k$ is given by $\tilde F=\CC$, with map $b:\tilde F\to F$, $b(w)=w^k$.
Note that $b$ is the homeomorphism mentioned above. Theorem \ref{thm:main-thm} applies to get a smooth
symplectic manifold $(\tilde X,\o_{\nu_D})$ with a map $b:\tilde X\to X$ which is a symplectomorphism
outside a small neighbourhood of $D$. 

Note that $b$ is bijective, hence a homeomorphism. Then we can
identify $\tilde X\cong X$, and hence Theorem \ref{thm:main-thm} in this case means that we can change
the orbifold atlas of $X$ by a smooth atlas, and the orbifold symplectic form $\omega$ by a smooth symplectic
form $\o_{\nu_D}$.
This process is the reverse process to that of \cite{MRT}, where we started we a smooth symplectic manifold
to produce an orbifold symplectic form with some prescribed isotropy group (in \cite{MRT} the dimension of
the orbifold is $4$, but the result holds for arbitrary dimension).

\smallskip \noindent \textbf{Example 2. A product.}
Let $(M,\o_1)$ be a symplectic orbifold with isolated orbifold singularities. By \cite{CFM}, we have a symplectic
resolution $b:(\tilde M,\tilde\o_1)\to (M,\o_1)$. Let $(N,\o_2)$ be a smooth symplectic manifold. Then
$(X=M\x N, \o_1+\o_2)$ is a symplectic orbifold with homogeneous isotropy sets. Actually, if $x\in M$ is
a singular point of $M$, then $D=\{x\}\x N$ is an isotropy submanifold of $X$. The map $b:
(\tilde M\x N,\tilde\o_1+\o_2) \to (M\x N,\o_1+\o_2)$ is a symplectic resolution,
agreeing with Theorem \ref{thm:main-thm}. In this case, the symplectic normal bundle to $D$ is trivial.

\smallskip \noindent \textbf{Example 3. Symplectic bundle over an orbifold.}
Let $(F,\o_F)$ be a symplectic manifold, $(B,\o_B)$ a symplectic orbifold with isolated singularities,
and let $F\to M\stackrel{\pi}{\longrightarrow} B$
be a smooth bundle, where $(M,\o)$ is a symplectic orbifold such that $(F_x,\o|_{F_x})$ is
symplectomorphic to $(F,\o_F)$, for all fibers $F_x=\pi^{-1}(x)$, $x\in B$ (that is, $M$ is a symplectic
bundle over an orbifold symplectic base). 
For a small orbifold chart $(U,V,\varphi,\G)$ of $B$, we have $\pi^{-1}(V) \cong V\x F \cong (U/\G)\x F=(U\x F)/\G$,
where $\G$ acts on the first factor. As we are assuming that $B$ has isolated singularities,
the isotropy sets are $F_x$, where $x\in B$ is a singularity of $B$. Theorem \ref{thm:main-thm}
guarantees the existence of a symplectic resolution of $M$. 

Actually, the resolution is given as follows. Take a resolution
$b:(\tilde B,\tilde\o_B)\to (B,\o_B)$ provided by \cite{CFM}, and take the pull-back of the bundle
$F\to \tilde M\stackrel{\tilde\pi}{\longrightarrow} \tilde B$. Then for every singular point $x\in B$ with orbifold
chart $(U,V,\varphi,\G)$, we glue the symplectic form $\tilde\o_B \x \o_F$ on $\tilde\pi^{-1}(\tilde V)
\cong \tilde V\x F$ to $\o_M$ along the complement of a neighbourhood of $F_x$.
Theorem \ref{thm:main-thm} does the job without having to care about the details.

\smallskip \noindent \textbf{Example 4. Mapping torus.}
Let $(M,\o_M)$ be a compact symplectic orbifold with isolated singularities. Let $f:M\to M$ be an orbifold symplectomorphism
and consider the mapping torus $M_f=(M\x [0,1])/\sim$ with $(x,0)\sim (f(x),1)$. Let $t$ be the coordinate of
$[0,1]$ and consider a circle $S^1$ with coordinate $\theta$. Then $X=M_f \x S^1$ is a symplectic orbifold
with symplectic form $\o=\o_M+ dt\wedge d\theta$. The isotropy sets are $2$-tori. Take a singular point $x\in M$
and let $x_0=x, x_1=f(x_0), x_2=f^2(x_0),\ldots$ be the orbit of $x$. As all of them are singular points and
there are finitely many of them in $M$,
there is some $n>0$ such that $x_n=x_0$, and we take the minimum of such $n$. Consider the
circle $C_x$ given by the image of $\{x_0,\ldots, x_{n-1}\}\x [0,1]$ in $M_f$, which is a $n:1$ covering of
$[0,1]/\sim=S^1$. 
Then $D=C_x \x S^1$ is an isotropy set of $X=M_f\x S^1$. Theorem \ref{thm:main-thm}
gives a symplectic resolution of $X$. This can be constructed alternatively by taking the symplectic resolution
$b: \tilde M\to M$ of $M$ given by \cite{CFM}. If we arrange to do it in an equivariant way around the singular points, 
then we may lift $f$ to a symplectomorphism
$\tilde f:\tilde M\to \tilde M$ of the resolved manifold, and $\tilde X=\tilde M_{\tilde f}\x S^1$ 
is a symplectic resolution of $X$.

\smallskip \noindent \textbf{Example 5. An example with non-trivial normal bundle.}
Take a standard $6$-torus $T^6=\RR^6/\ZZ^6$ with the standard symplectic form
$\omega=dx_1\wedge dx_2+dx_3\wedge dx_4+dx_5\wedge dx_6$, and consider the maps 
 \begin{align*}
f(x_1,x_2,x_3,x_4,x_5,x_6) &= (x_1,x_2,-x_3,-x_4,-x_5,-x_6), \\ 
g(x_1,x_2,x_3,x_4,x_5,x_6) &= f(x_1+\frac12,x_2,x_3,x_4,-x_5,-x_6) .
 \end{align*}
Then $X=T^6/\la f,g\ra$ is a symplectic orbifold. The isotropy locus are the subsets 
$S_{\mathbf{a}}=\{(x_1,x_2,a_3,a_4,a_5,a_6) | (x_1,x_2)\in \RR^2\}$, for $\mathbf{a}=(a_3,a_4,a_5,a_6) \in
\{0,1/2\}^4$. Each of them is isomorphic to $\RR^2 /\la (1/2,0), (0,1)\ra$. The normal structure is 
$F=\CC^2/\ZZ_2$, with action $(z_1,z_2)\sim (-z_1,-z_2)$. The normal bundle is the quotient of
the trivial bundle $T^2\x F \to T^2$ over $T^2=\RR^2/\ZZ^2$, by the map $g$, hence it is non-trivial
(although it is trivializable).

\smallskip \noindent \textbf{Example 6. Resolving the quotient of a symplectic nilmanifold.}
To give an explicit example of a resolution, we shall take a symplectic $6$-nilmanifold from \cite{BM2}
and perform a suitable quotient to get a symplectic $6$-orbifold with homogeneous isotropy. 
For instance we take the nilmanifold
corresponding to the Lie algebra $L_{6,10}$ of Table 2 in \cite{BM2}, which is symplectic since it appears
in Table 3 of \cite{BM2}. Take the group of $(7\x 7)$-matrices given by the matrices
$$
\left(\begin{array}{ccccccc} 
1 & x_2 & x_1 & x_4 & x_1x_2 & x_5 & x_6 \\
0 & 1 & 0 & -x_1 & x_1 & x_1^2/2 & x_3 \\
0 & 0 & 1 & 0 & x_2 & - x_4 & x_2^2/2 \\
0 & 0 & 0 & 1 & 0 & 0 & 0 \\
0 & 0 & 0 & 0 & 1 & x_1 & x_2 \\
0 & 0 & 0 & 0 & 0 & 1 & 0 \\
0 & 0 & 0& 0 & 0 & 0 & 1 \end{array}\right),
$$
where $x_{i} \in {\RR}$, for any $i =1, \ldots, 6$. Then, a global system of coordinate functions
$\{x_{1}, \ldots, x_{6}\}$ for $G$ is given by $x_{i}(a)=x_{i}$, with $i=1, \ldots, 6$.
Note that if a matrix $A\in G$ has coordinates $a_i$, then the change of coordinates
of $a\in G$ by the left translation $L_{A}$ are given by
\begin{align*}
&L_{A}^*(x_1) = x_1+ a_1, &
&L_{A}^*(x_2) = x_2+ a_2, \\
&L_{A}^*(x_3) = x_3 + a_{1} x_2 + a_{3}, &
& L_{A}^*(x_4) = x_4- a_{2} x_1 + a_{4}, \\
&L_{A}^*(x_5) = x_5+ \frac{1}{2} a_{2} x_{1}^2 - a_{1} x_4 + a_{1} a_{2} x_1 + a_{5}, \hspace{-2cm}\\
&L_{A}^*(x_6) = x_6 + \frac{1}{2} a_{1} x_{2}^2 + a_{2} x_3 + a_{1} a_{2} x_2 + a_{6}. \hspace{-2cm}
\end{align*}

A standard calculation shows that a basis for the left invariant $1$-forms on $G$ consists of
$$
\{dx_{1}, dx_{2}, dx_{3}-x_1dx_2, dx_{4} + x_{2} dx_{1}, dx_{5} + x_{1} dx_{4}, dx_{6} - x_2 dx_{3}\}.
$$
Let $\Gamma$ be the discrete subgroup of $G$ consisting of
matrices with entries $(x_1,x_2,\ldots, x_6) \in (2\ZZ)^2 \x \ZZ^4$, that is
$x_i$ are integer numbers and $x_1,x_2$ are even.
It is easy to see that $\G$ is a subgroup of $G$.
So the quotient space of right cosets $M\,=\,\Gamma{\backslash} G$
is a compact $6$-manifold.
Hence the $1$-forms
\begin{align*}
&e_1=dx_{1}, e_2=-dx_{2}, e_3=dx_{3}-x_1dx_2-dx_4-x_2dx_1=d(x_3-x_4-x_1x_2), \\
&e_4=dx_4+x_2dx_1, e_5= dx_{5} + x_{1} dx_{4}, e_6=dx_{6} - x_2 dx_{3}
\end{align*}
satisfy
$$
de_1=de_2=de_3=0, de_4= e_1e_2, de_5=e_1e_4, de_6=e_2e_3+e_2e_4\, .
$$
This coincides with $L_{6,10}$ in  Table 2 in \cite{BM2}.
The symplectic form of $M$ is $\omega=e_1e_6+e_2e_5-e_3e_4$ (see Table 3 in \cite{BM2}).

Now we consider the map $\varphi(x_1,x_2,x_3,x_4,x_5,x_6)=(x_1,-x_2,-x_3,-x_4,-x_5,x_6)$. This
is given in terms of the matrices as $\varphi(A)=PAP$, where $P$ is the diagonal matrix 
$P=\mathrm{diag}(1,-1,1,-1,-1,-1,1)$. Note
that for $N\in \G$, $PNAP=(PNP)(PAP)$. As $\varphi(\G)=\G$, we see that $\varphi$ descends
to $M=\G \backslash G$. This is clearly a symplectomorphism with $\varphi^2=\Id$, hence
$$
X=M /\la \varphi\ra
$$
is a symplectic orbifold. The isotropy locus is formed by the sets
$$
S_{\mathbf{b}}=\{ (x_1, b_2, b_3, b_4 - b_2x_1 ,b_5+\frac12 b_2 x_1^2,x_6) | (x_1,x_6 )\in \RR^2\},
$$
for $\mathbf{b}= (b_2, b_3,b_4,b_5) \in \{0,1\} \x \{0,1/2\}^3$. This is a collection of $16$ tori, each of them
of homogeneous isotropy $\CC^2/\ZZ_2$.
This is computed computed solving the equation $\varphi(x)= A x$ for some $A \in \G$,
which translates to $x_1=L_A^*(x_1)$, $-x_i=L_A^*(x_i)$ for $2 \le i \le 5$ and $x_6=L_A^*(x_6)$.

The above manifold $M$ is a circle bundle (with coordinate $x_6$) over a mapping torus (with coordinate
$x_1$) of a $4$-torus (with coordinates $x_2,x_3,x_4,x_5$). Then we take a quotient of $T^4$ by
$\ZZ_2$ acting as $\pm \Id$. So this fits with Example 4 above.

Let us compute the Betti numbers of the resolution $\tilde X$ of $X$. The Betti numbers of $M$ appear
in Table 2 of \cite{BM2} and are $b_1(M)=3,b_2(M)=5,b_3(M)=6$. Easily we get that $H^1(M)=\la e_1,e_2,e_3\ra$
and $H^2(M)=\la e_2e_3, e_1e_5, e_1e_3, e_2e_6, e_3e_6+e_4e_6\ra$. Taking the invariant part
by the action of $\varphi$, we have
 $$
 H^1(X)=\la e_1\ra, \quad 
 H^2(X)=\la e_2e_3\ra,
 $$
so $b_1(X)=1$ and $b_2(X)=1$. By Poincar\'e duality, $b_4(X)=b_5(X)=1$. Now $\chi(X)=0$ since
$\chi(M)=0$ and the ramification locus are $T^2$ which have $\chi(T^2)=0$. Therefore $b_3(X)=2$.

The resolution process changes $F=\CC^2/\ZZ_2$ by the single blow-up at the origin $\tilde F$, which 
has exceptional divisor $Z=\CP^1$ with $Z^2=-2$. Then each exceptional locus increases by $1$
the second Betti number $b_2$ (cf.\ the computations of cohomology in \cite{FFKM}). Therefore
$b_1(\tilde X)=1, b_2(\tilde X)=1+16=17$. By Poincar\'e duality, 
$ b_4(\tilde X)=17, b_5(\tilde X)=1$. Again $\chi(\tilde X)=0$, since the exceptional divisors are 
$\CP^1$-bundles over $T^2$ and hence they have $\chi(E)=0$. So $b_3(\tilde X)=34$.

\end{document}